\documentclass{amsart}

\usepackage{amsmath}
\usepackage{amssymb}
\usepackage{hyperref}

\theoremstyle{plain}
\newtheorem{theorem}{Theorem}[section]
\newtheorem{proposition}[theorem]{Proposition}
\newtheorem{lemma}[theorem]{Lemma}

\theoremstyle{definition}

\newtheorem{definition}[theorem]{Definition}
\newtheorem{remark}[theorem]{Remark}

\DeclareMathOperator{\ord}{ord}
\DeclareMathOperator{\lcm}{lcm}
\DeclareMathOperator{\supp}{supp}

\numberwithin{equation}{section}

\begin{document}

\title{Remarks on a Generalization of the Davenport Constant}

\author{Michael Freeze \and Wolfgang A. Schmid}

\address{The University of North Carolina at Wilmington \\
Department of Mathematics and Statistics \\
Wilmington, North Carolina 28403-2870, USA}

\email{freezem@uncw.edu}

\address{Institut f\"ur Mathematik und Wissenschaftliches Rechnen\\
Karl--Fran\-zens--Universi\-t\"at Graz\\
Heinrichstra{\ss}e 36\\
8010 Graz, Austria}

\email{wolfgang.schmid@uni-graz.at}

\thanks{W.A. Schmid is supported by the Austrian Science Fund (FWF): P18779-N18}

\keywords{Davenport constant, non-unique factorization, set of lengths, zero-sum sequence, Krull monoid}

\subjclass[2000]{11B75, 11R27, 13F05, 20K01}

\begin{abstract}
A generalization of the Davenport constant is investigated. For a finite abelian group $G$ and a positive integer $k$, let $\mathsf{D}_k(G)$ denote the smallest $\ell$ such that each sequence over $G$ of length at least $\ell$ has $k$ disjoint non-empty zero-sum subsequences.
For general $G$, expanding on known results, upper and lower bounds on these invariants are investigated and it is proved that the sequence $(\mathsf{D}_k(G))_{k\in\mathbb{N}}$ is eventually an arithmetic progression with difference $\exp(G)$, and several questions arising from this fact are investigated.
For elementary $2$-groups, $\mathsf{D}_k(G)$ is investigated in detail; in particular, the exact values are determined for groups of rank four and five (for rank at most three they were already known).

\end{abstract}

\maketitle

 \section{Introduction}
\label{int}

In this paper we investigate a certain well-established generalization of the Davenport constant:
for a finite abelian group $G$ and a positive integer $k$ let $\mathsf{D}_k(G)$ denote the smallest integer $\ell$ such that each sequence over $G$
of length at least $\ell$ has $k$ disjoint non-empty zero-sum subsequences, i.e., the sum of the elements occurring in the subsequence is the neutral element of the group (also see Section \ref{prel} for a more formal definition).  This is the Davenport constant when $k=1$; we call $\mathsf{D}_k(G)$ the $k$-wise Davenport constant of $G$.

This variant of the Davenport constant was introduced and investigated by F.~Halter-Koch \cite{halterkoch92}, in the context of investigations on the asymptotic behavior of certain counting functions of algebraic integers defined via factorization properties (also see the monograph \cite{geroldingerhalterkochBOOK}, in particular Section 6.1, and the survey article \cite{gaogeroldingersurvey}, in particular Section 5).
Moreover, knowledge of these constants is highly relevant when applying the inductive method to determine or estimate the Davenport constant of certain finite abelian groups. This connection was made explicit by Ch.~Delorme, O.~Ordaz, and D.~Quiroz \cite{delormeetal01} (cf.~Theorem \ref{ub_thm_ind}) and motivated them to investigate these constants; additionally  they introduced other zero-sum invariants, which to a limited extent are also considered in the present paper (see Section \ref{prel} for a definition). Further applications of and results on these invariants can be found in recent papers by G.~Bhowmik, I.~Halupczok, and J.-Ch.~Schlage-Puchta \cite{bhowmikschalge07,bhowmikhallschlage}. For related problems see the recent paper of B.~Girard \cite{girardNEW} and the references there.

The purpose of the present paper is two-fold.
On the one hand, we seek a more detailed understanding of these invariants for general finite abelian groups.
In view of the fact that the Davenport constant is only known for a few special types of groups, we concentrate on establishing upper and lower bounds for these invariants. Moreover, we examine the asymptotic behavior of the sequence $(\mathsf{D}_k(G))_{k \in \mathbb{N}}$.
It is known that the sequence $(\mathsf{D}_k(G) - k \exp(G) )_{k \in \mathbb{N}}$ is bounded (see \cite[Proposition 2.7]{delormeetal01}, also see \cite[Theorem 6.1.5]{geroldingerhalterkochBOOK}).
We show that it is eventually constant, i.e., we show that for each finite abelian group $G$ we have $\mathsf{D}_k(G)= \mathsf{D}_0(G)+ k\exp(G)$ for some $\mathsf{D}_0(G)\in \mathbb{N}_0$ and all sufficiently large $k$.
In fact, it is known that for groups of rank at most two, and for some other types of groups, an equality of the form $\mathsf{D}_k(G)= \mathsf{D}_0(G)+ k\exp(G)$ for some $\mathsf{D}_0(G)\in \mathbb{N}_0$ holds for all $k$.
Yet, it is known that this cannot hold for all finite abelian groups, e.g., it is know that it cannot hold for elementary $2$ and $3$-groups of rank at least $3$ (cf.~\cite{delormeetal01,bhowmikschalge07}).
One of our results, Theorem \ref{elb_thm}, provides a lower bound on $\mathsf{D}_k(G)$ and suggests that indeed this cannot hold for a large variety of groups; very informally, for groups whose rank is large relative to the exponent.

On the other hand, to complement our general results that only yield bounds, we study the $k$-wise Davenport constants of elementary $2$-groups in detail. We obtain the precise value in some cases, for groups of small rank, and obtain refined bounds in the general case.

The organization of the paper is as follows. In Section \ref{prel} we recall some notions and terminology that are relevant for our investigations, and in Section \ref{ub} we recall and expand various results used to obtain upper bounds for the $k$-wise Davenport constants.
In Section \ref{elb} we establish the lower bound mentioned above. Then, in Sections \ref{as} and \ref{rlb}, we focus on the above-mentioned asymptotic result and some problems related to it; in the course of our investigations we also establish an explicit upper bound for an invariant called the successive distance (see Section \ref{prel} for the definition) that is of some independent interest.  Finally, in Section \ref{e2g} we investigate the constants for elementary $2$-groups.

\section{Preliminaries}
\label{prel}

We recall notation and general results used in this paper; our notation and terminology is in line with, e.g., \cite{geroldingerhalterkochBOOK,gaogeroldingersurvey,geroldingerBARC}.

\subsection{General notions}
We denote by $\mathbb{N}$ and $\mathbb{N}_0$ the sets of positive and non-negative integers, respectively.
For $m,n \in \mathbb{Z}$, we denote by $[m,n] = \{ z \in \mathbb{Z} \colon m \le z \le n \}$ the interval of integers.

Let $G$ be a finite abelian group; we use additive notation.
A subset $\{e_1, \dots, e_n\} \subset G\setminus \{0\}$, with $e_i \neq e_j$ for $i\neq j$,
 is called \emph{independent} if $\sum_{i=1}^n m_ie_i =0$, with $m_i\in \mathbb{Z}$, implies that $m_ie_i=0$ for each $i \in [1,n]$. An independent generating subset of $G$ is called a \emph{basis} of $G$. If we say that $\{e_1, \dots, e_n\}$ is a basis of $G$
we implicitly impose that the $e_i$s are pairwise distinct.

For $n \in \mathbb{N}$, we denote by $C_n$ a cyclic group of order $n$.
There exist uniquely determined $1< n_1 \mid \dots \mid n_r$ such that $G \cong C_{n_1} \oplus \dots \oplus C_{n_r}$.
The \emph{exponent} of $G$, denoted $\exp(G)$, is $\lcm(n_1, \dots , n_r)$, i.e.,
$n_r$ for $r\neq 0$ and $1$ for $r=0$; the \emph{rank} of $G$, denoted $\mathsf{r}(G)$, is
$r$. The group $G$ is called a \emph{$p$-group} if $\exp(G)$ is a prime power and \emph{elementary} if $\exp(G)$ is squarefree.
We set $\mathsf{D}^{\ast}(G)=\sum_{i=1}^{r}(n_i - 1) +1$ and denote by $G^-$ an abelian group such that $G\cong G^-\oplus C_{\exp(G)}$.

For subsets $A,B \subset G$ let $A+B=\{a+b\colon a\in A, \, b\in B\}$. The set $A$ is called \emph{sum-free} if for all $a,b,c \in A$ one has
$a+b \neq c$, i.e., $(A+A)\cap A=\emptyset$; and $A$ is called a \emph{Sidon set} if for all $a,b,c,d\in A$ with $|\{a,b,c,d\}|\ge 3$ one has that $a+b \neq c+d$.

\subsection{Sequences}
The central object of this paper are finite sequences over finite abelian groups---we use additive notation for groups---and more specifically conditions on sequences that guarantee the existence of certain subsequences the sum of whose terms is the zero-element of the group.  
In the early works on this subject, from the 1960s, one
actually considered finite sequences in the traditional sense (i.e., repetitions of elements are allowed and the terms are ordered).
However, the fact that the terms are ordered is irrelevant
for the problems under investigation; recall that the underlying
group is abelian and thus the sum of elements is clearly invariant under
reordering.
Indeed, most of the time the ordering is not only irrelevant,
but a cause of technical and notational problems in various arguments;
in particular, in those of the form mainly used in the present paper.

Thus, it is now common to consider, rather than finite sequences
in the traditional sense, a finite collection of elements of $G$ where repetition of elements is allowed yet the elements are not ordered.

One way to formalize this is to consider elements of the free
abelian monoid over $G$.
Another one, which is equivalent except for notation, is to consider finite multi-sets over $G$.
In the present paper, we use the former approach, following the above mentioned works.

Let $(G,+,0)$ be a finite abelian group.
As just mentioned, rather than sequences over $G$ in the traditional sense, in other words
elements of the free non-abelian monoid over $G$, we
consider elements of the free abelian monoid over $G$; still we call
these elements sequences to preserve the intuition and historical context.

We denote by $\mathcal{F}(G)$ the multiplicatively written free abelian monoid
over $G$. By definition, an element $S \in \mathcal{F}(G)$, a \emph{sequence} over $G$, is thus
a (formal) abelian product
\[S= \prod_{g \in G} g^{v_g} \text{ with uniquely determined } v_g \in \mathbb{N}_0;\]
since $G$ is finite, we need no additional assumption on the $v_g$s.  
A possibly more intuitive way of considering $S$ is to note that there exist up to ordering uniquely determined elements $g_1, \dots, g_{\ell} \in G$ such that $S=g_1\dots g_{\ell}$---again, this is a (formal) abelian product---that is,
$S$ corresponds to the sequence (in the traditional sense) $(g_1, \dots, g_{\ell})$ where we merely ``forget'' the ordering of the terms.

The identity element of $\mathcal{F}(G)$ is simply denoted by $1$; we call it the \emph{empty sequence}.
The product of two elements of $\mathcal{F}(G)$, corresponds to the
concatenation of two sequences. 
We say that $T$ is a \emph{subsequence} of $S$ if $T \mid S$ in $\mathcal{F}(G)$;
moreover, if $T \mid S$, then we denote by $T^{-1}S$ the unique element $T' \in \mathcal{F}(G)$
such that $T'T=S$; this corresponds to the subsequence of $S$ of the terms not in $T$.
We call sequences $T_1, \dots, T_k$ \emph{disjoint} subsequences of $S$, 
if their product $T_1 \dots T_k$ is a subsequence of $S$.
Occasionally, we will consider the greatest common divisor of sequences over $G$, being elements 
of a free abelian monoid this is well-defined; this corresponds to, in multi-set terminology, the intersection. Yet, note that disjoint subsequences
do not necessarily have a trivial greatest common divisor.

For $S=g_1\dots g_{\ell}= \prod_{g \in G} g^{v_g}$, we denote by $|S|={\ell} = \sum_{g \in G}v_g$ the \emph{length},
by $\sigma(S)= \sum_{i=1}^{\ell}g_i =\sum_{g \in G}v_g g$ the \emph{sum},
by $\mathsf{v}_g(S)=v_g$ the \emph{ multiplicity} of $g$,
and by $\mathsf{k}(S)=\sum_{i=1}^{\ell} 1/\ord(g_i)=\sum_{g\in G}v_g / \ord(g)$ the \emph{cross number} of $S$.

Finally, we call a sequence \emph{squarefree} if the multiplicity of each element is at most $1$; these are effectively sets, yet for clarity we do \emph{not}
identify squarefree sequences and sets. 

We also consider a subsets of $G$ attached to a sequence over $G$, namely 
we denote by $\supp(S)= \{g_1, \dots, g_n\}= \{g \in G \colon v_g>0\}$ the \emph{support} of $S$.

A sequence $S$ over $G$ is called a \emph{zero-sum sequence} if $\sigma(S)=0$.
Let $\mathcal{B}(G)= \{S\in \mathcal{F}(G) \colon \sigma(S)=0 \}$ the set of
all zero-sum sequences over $G$; as the product of two zero-sum sequences is clearly
again a zero-sum sequence, and the empty sequence is a zero-sum sequence, this is in fact a submonoid
of $\mathcal{F}(G)$.
The letter $\mathcal{B}$ is used, since W.~Narkiewicz~\cite{narkiewicz79} called this
structure block monoid.

A zero-sum sequences is called a \emph{minimal zero-sum sequence} if it is non-empty and each proper and non-empty subsequence is not a zero-sum sequence.
In other words, a non-empty zero-sum sequence $B$ is a minimal zero-sum sequence
if and only if $B$ can not be decomposed into two non-empty zero-sum sequences.
That is, the minimal zero-sum sequences are the irreducible elements, or atoms, of the monoid
$\mathcal{B}(G)$; thus, we denote the set of all minimal zero-sum sequences over $G$ by
$\mathcal{A}(G)$.

Each map $f:G\to G'$ of finite abelian groups, can be extended in 
a unique way to a monoid homomorphism from $\mathcal{F}(G)$ to $\mathcal{F}(G')$ that we also denote by $f$.

\subsection{Factorizations}

In the course of our arguments, we also need
to consider decompositions of zero-sum sequences into minimal zero-sum sequences,
in other words factorizations into irreducible elements in the monoid of zero-sum sequences $\mathcal{B}(G)$.
In particular, to prove one of our results, we need to 
establish an explicit upper bound on the successive distance
of the monoid of zero-sum sequences over a finite abelian group (cf.~below for
a definition);
this invariant was introduced and investigated in Non-unique Factorization Theory. 
On the one hand the definition of this invariant is somewhat involved 
and---while natural in the context of Non-unique Factorization Theory---might 
seem rather artificial as a ``pure'' zero-sum problem, 
and on the other hand this result is of relevance in that context as well.
Thus, we treat it using the notions and notation
typically used in Non-unique Factorization Theory,
and recall notions on factorizations as used in this subject. 
To be consistent throughout the paper and to avoid introducing ad-hoc notation,
we use it in other arguments, too.  

Let $B \in \mathcal{B}(G)$ be a zero-sum sequence.
By the definition of a minimal zero-sum sequence, it follows directly that
$B$ can be factored (or, decomposed) into minimal zero-sum sequences.
To be more precise, there exists some $\ell \in \mathbb{N}_0$ and minimal zero-sum sequences $A_1, \dots, A_{\ell} \in \mathcal{A}(G)$ such that $B = A_1 \dots A_{\ell}$; if $B$ is the empty sequence, than $\ell = 0$.
However, in general neither the integer $\ell$ nor the minimal zero-sum sequences are uniquely
determined by $B$. In various of our arguments, the specific form of
a factorization into minimal zero-sum sequences and relations among distinct factorizations of the same zero-some sequences are relevant. 
We briefly recall some key notions.

One could consider a factorizations of a zero-sum sequences $B\in \mathcal{B}(G)$ as
a sequence (in the traditional sense) $(A_1, \dots, A_{\ell})$ of minimal zero-sum sequences $A_i$ such that  $B=A_1 \dots A_{\ell}$.
For essentially the same reason as for sequences over $G$,
it is advantageous and common to disregard the ordering,
and to consider \emph{formal} abelian products of minimal zero-sum sequences instead.

Thus, we denote by $\mathsf{Z}(G)$ the free abelian monoid over $\mathcal{A}(G)$, the \emph{factorization monoid} over $G$,
i.e., all finite formal abelian products of minimal zero-sum sequences over $G$; the letter $\mathsf{Z}$, traditionally used to denote this structure, is derived from the German word $\emph{Zerlegung}$.
An element $\zeta \in \mathsf{Z}(G)$ is thus a \emph{formal} abelian product 
$\zeta = \prod_{A\in \mathcal{A}(G)}A^{v_A} = A_1 \cdot \ldots \cdot A_{\ell}$ with $v_A\in \mathbb{N}_0$ and $A_i \in \mathcal{A}(G)$.
Here, we use dots in the latter product 
to highlight that it is a formal product; later on, we only do so in critical cases. 
Yet, we exclusively use lower-case Greek letters to denote factorizations, 
while we use upper-case Roman letters to denote sequences, to minimize the risk of confusion.

We denote by $\pi: \mathsf{Z}(G) \to \mathcal{B}(G)$ the map
induced by evaluating the formal product, i.e., $A_1 \cdot \ldots \cdot A_{\ell}\mapsto A_1  \ldots A_{\ell}$.

For each $B \in \mathcal{B}(G)$, we thus have that $\pi^{-1}(B)\subset \mathsf{Z}(G)$
denotes the set of all factorizations of $B$ into minimal zero-sum sequences; this set is denoted by $\mathsf{Z}(B)$.

For a factorization $\zeta = \prod_{A\in \mathcal{A}(G)}A^{v_A} = A_1 \cdot \ldots \cdot A_{\ell}$,
we denote by  $|\zeta| = \sum_{A\in \mathcal{A}(G)} v_A= \ell$
its \emph{length}, i.e., the number of minimal zero-sum sequences
in the decomposition, taking multiplicity into account.
Additionally, we use for $A\in \mathcal{A}(G)$ the notation $\mathsf{v}_A(\zeta)=v_A$ to denote the multiplicity with which the minimal zero-sum sequence
appears in the factorization $\zeta$.
Moreover, for $B\in \mathcal{B}(G)$ a zero-sum sequence, 
 we call $\mathsf{L}(B)= \{|\zeta| \colon \zeta \in \mathsf{Z}(B)\}$
the \emph{set of lengths} of $B$, i.e., the set of all integers $\ell$ such
that $B$ can be factored into $\ell$ (not necessarily distinct) minimal zero-sum sequences.

Since $\mathsf{Z}(G)$ is a free abelian monoid, the notions
``divides'' and ``greatest common divisor'' make sense.
More explicitly, for $\xi, \zeta \in \mathsf{Z}(G)$,
we write $\xi \mid \zeta$, if $\mathsf{v}_A(\xi)\le \mathsf{v}_A(\zeta)$
for each $A\in \mathcal{A}(G)$, and in this case we use the notation $\xi^{-1}\zeta$ to denote the unique $\xi' \in \mathsf{Z}(G)$ such that 
$\xi\xi'=\zeta$; an we write $\gcd(\xi, \zeta)$ to denote 
the element of $\xi' \in \mathsf{Z}(G)$ with $\mathsf{v}_A(\xi')= \min \{\mathsf{v}_A(\xi),\mathsf{v}_A(\zeta)\}$ for each $A\in \mathcal{A}(G)$.

For $\zeta, \xi \in \mathsf{Z}(G)$ let $\mathsf{d}(\zeta,\xi)= \max\{|\gcd(\zeta,\xi)^{-1}\zeta|,|\gcd(\zeta,\xi)^{-1}\xi|\}$ the \emph{distance} of $\zeta$ and $\xi$; 
more informally and intuitively, the distance of two factorizations
is determined by canceling common factors and then considering the maximum 
of the lengths of the parts remaining after this cancellation. Via the distance a metric is defined on $\mathsf{Z}(G)$.

Next, we recall the definition of $\Delta(G)$, the \emph{set of distances} of $\mathcal{B}(G)$, and of $\delta(G)$, the \emph{successive distance} of $\mathcal{B}(G)$ (introduced in \cite{geroldinger88} and \cite{foroutan06}, resp).

For $L = \{\ell_1, \ell_2, \dots\}\subset \mathbb{N}_0$ with $\ell_i < \ell_{i+1}$ let $\Delta(L)= \{\ell_2-\ell_1, \ell_3 - \ell_2, \dots\}$.
For $G$ a finite abelian group, let $\Delta(G)= \bigcup_{B \in \mathcal{B}(G)}\Delta(\mathsf{L}(B))$. It is well-known that
$\Delta(G)\subset [1, \mathsf{D}(G)-2]$, in particular $\Delta(G)=\emptyset $ if $|G| \le 2$.

For $B \in \mathcal{B}(G)$, two distinct elements $k, \ell \in \mathsf{L}(B)$ are called \emph{adjacent lengths} of $B$ if $[\min\{k, \ell\},\max\{k, \ell\}] \cap \mathsf{L}(B)=\{k,\ell\}$. Note that if $k$ and $\ell$ are adjacent lengths, then $|k-\ell| \in \Delta(\mathsf{L}(B))$.
For $\zeta \in \mathsf{Z}(G)$, let $\delta(\zeta)$ denote the smallest $m \in \mathbb{N}_0$ with the following property:  
If $k\in \mathbb{N}$ and $k$ and $|\zeta|$ are adjacent lengths of $\pi(\zeta)$, then there exists some $\xi \in \mathsf{Z}(\pi(\zeta))$ with $|\xi|=k$ and
$\mathsf{d}(\xi,\zeta)\le m$.
And, \[\delta(G)=\sup\{\delta(\zeta)\colon \zeta \in \mathsf{Z}(G)\}.\]
It is known that $\delta(G)$ is finite (see \cite[Theorem 3.9]{foroutan06} and also see \cite[Theorem 3.1.4]{geroldingerhalterkochBOOK}).

Since in this paper we have to consider elements of $G$, sequences over $G$, and elements of the factorization monoid over $G$ simultaneously
we adopt the following notational convention, already used above, to avoid confusion (in critical cases we add an explicit explanation): elements of $G$ are denote by lowercase Latin letters, sequences by uppercase Latin letters, and elements of the factorization monoid by lowercase Greek letters (the last is non-standard).

\subsection{The invariants}

We give a more formal definition of the invariants that are at the center of our interest, the \emph{$k$-wise Davenport constants}.

\begin{definition}
Let $G$ be a finite abelian group.
Let $k\in \mathbb{N}$.  We denote by $\mathsf{D}_k(G)$ the smallest integer $\ell \in \mathbb{N}$ such that every sequence $S \in \mathcal{F}(G)$ of length $|S| \ge \ell$ is divisible by a product of $k$ non-empty zero-sum sequences.
\end{definition}
Note that $\mathsf{D}_1(G) = \mathsf{D}(G)$, where $\mathsf{D}(G)$ is the classical Davenport constant.
As for the Davenport constant,  the constants $\mathsf{D}_k(G)$ can alternatively be defined as the maximum length of certain zero-sum sequences.
Since we make frequent use of this characterization, we recall it and relevant related notions.

We denote by $\mathcal{A}_k(G) = \{B \in \mathcal{B}(G)\colon \max \mathsf{L}(B)=k\}$ and by $\mathcal{M}_k(G)= \{B \in \mathcal{B}(G)\colon \max \mathsf{L}(B)\le k\}$. Note that $\mathcal{A}_1(G) = \mathcal{A}(G)$.

Then, \[\mathsf{D}_k(G) = \max\{|B|\colon B \in \mathcal{M}_k(G)\} = \max\{|B|\colon B \in \mathcal{A}_k(G)\}.\]
The characterization involving $\mathcal{M}_k(G)$ is more classical, indeed it is the form in which $\mathsf{D}_k(G)$ was introduced initially (see \cite[Proposition 1]{halterkoch92}). We use both $\mathcal{A}_k(G)$ and $\mathcal{M}_k(G)$, since this extra flexibility can be useful.
To see that it does not make any difference for the maximal length whether one considers the former or the latter it suffices to note that
$\max\mathsf{L}(B0)=\max\mathsf{L}(B)+1$ for each $B \in \mathcal{B}(G)$, indeed $\mathsf{L}(B0)=\{1+\ell \colon \ell \in \mathsf{L}(B)\}$.
This reasoning additionally shows that $\mathsf{D}_{k+1}(G)\ge \mathsf{D}_k(G)+1$ and in fact each $B \in \mathcal{M}_k(G)$ with $|B|=\mathsf{D}_k(G)$ is an element of $\mathcal{A}_k(G)$.

Our investigations also involve another type of zero-sum invariants. 
For a subset $I \subset \mathbb{N}$, we denote by $\mathsf{s}_I(G)$ the smallest element $\ell \in \mathbb{N} \cup \{\infty\}$ such that each sequence $S \in \mathcal{F}(G)$ of length $|S| \ge \ell$ is divisible by a zero-sum sequence of length in $I$.

Here, we consider these invariants only for $I=[1,k]$ for $k\in \mathbb{N}$,
and use the short-hand notation $\mathsf{s}_{\le k}(G)$ for $\mathsf{s}_{[1,k]}(G)$. 
These constant were introduced in \cite{delormeetal01}, using the notation 
$\mathsf{D}^k(G)$. 
The special case $k = \exp(G)$ is classical and was introduced in \cite{olson69_2} and \cite{vanemdeboas69}. We recall that it is common to denote $\mathsf{s}_{\le \exp(G)}(G)$ by $\eta(G)$ (see \cite{gaogeroldingersurvey} for results on this invariant). Clearly $\mathsf{s}_{\le k+1}(G)\le \mathsf{s}_{\le k}(G)$ for each $k \in \mathbb{N}$. Moreover, as shown in \cite{delormeetal01}, $\mathsf{s}_{\le k}(G)= \mathsf{D}(G)$ for each $k \ge \mathsf{D}(G)$ and $\mathsf{s}_{\le k}(G)= \infty$ for $k < \exp(G)$.

Finally, we recall some results on $\mathsf{D}(G)$ and $\eta(G)$. It is well-known that $\mathsf{D}(G) \ge \mathsf{D}^{\ast}(G)$ and that in case $G$ is a $p$-group or $\mathsf{r}(G)\le 2$ equality holds (see, e.g., \cite[Section 5]{geroldingerhalterkochBOOK}). Moreover, it is known that $\mathsf{D}(G) \le \eta(G) \le |G|$ (see \cite{gaoyang97} and also see \cite[Theorem 4.2.7]{geroldingerBARC}).

\section{Upper bounds}
\label{ub}

In this section we state several results that can be used to derive upper bounds for $k$-wise Davenport constants.
These results build on well-known methods used in earlier investigations on this problem, which are mentioned in Section \ref{int}. Mainly, we dissect and slightly expand these results, to make them more directly applicable in the investigations of the following sections.

In the following proposition we collect some basic facts relating $\mathsf{D}_k(G)$ and $\mathsf{D}_{k+1}(G)$; for closely related results see \cite{halterkoch92,delormeetal01,geroldingerhalterkochBOOK}.
\begin{proposition}
\label{ub_prop_bas}
Let $G$ be a finite abelian group and let $k \in \mathbb{N}$.
\begin{enumerate}
\item Let $B$ be a zero-sum sequence over $G$ with $\max\mathsf{L}(B)=k+1$ and let $U$ be a minimal zero-sum sequence over $G$ with $U \mid B$.
Then, $\max\mathsf{L}(U^{-1}B)\le k$. Moreover, $\max\mathsf{L}(U^{-1}B) = k$ if and only if there exists some factorization $\zeta$ of $B$ with length $k+1$ such that $U \mid \zeta$.
\item Let $M = \min \{ |U| \colon U\in \mathcal{A}(G), \, U \mid B \text{ for } B \in \mathcal{A}_{k+1}(G), \, |B|= \mathsf{D}_{k+1}(G)\}$,
i.e., $M$ is the minimum of the lengths of all minimal zero-sum sequences
dividing some zero-sum sequence $B$ over $G$ with $\max \mathsf{L}(B)=k+1$ and
(maximal) length $|B|= \mathsf{D}_{k+1}(G)$. 
Then, $\mathsf{D}_{k+1}(G)\le \mathsf{D}_k(G) + M$.
\item For each $\ell \in \mathbb{N}$,  $\mathsf{D}_{k+1}(G) \le \max \{\mathsf{D}_k(G) + \ell, \mathsf{s}_{\le \ell}(G)-1\}$.
In particular, if $\mathsf{D}_k(G) \ge \eta(G) - 1 -\exp(G)$, then $\mathsf{D}_{k+1}(G)\le \mathsf{D}_k(G) +\exp(G)$.
\end{enumerate}
\end{proposition}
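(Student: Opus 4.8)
Let me work through the three parts, focusing on how they build on one another.

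For part (1), the plan is to use the definition of $\max\mathsf{L}$ directly. Since $U \mid B$ and $U$ is a minimal zero-sum sequence, $U^{-1}B$ is again a zero-sum sequence. Any factorization $\eta$ of $U^{-1}B$ can be extended to a factorization $U\eta$ of $B$ of length $|\eta|+1$, so $|\eta|+1 \le \max\mathsf{L}(B) = k+1$, giving $\max\mathsf{L}(U^{-1}B) \le k$. For the equality criterion, I would argue both directions: if $\max\mathsf{L}(U^{-1}B) = k$, take a length-$k$ factorization $\eta$ of $U^{-1}B$; then $U\eta$ is a factorization of $B$ of length $k+1$ that is divisible by $U$. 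Conversely, if some factorization $\zeta$ of $B$ with $|\zeta| = k+1$ satisfies $U \mid \zeta$, then $U^{-1}\zeta$ (in $\mathsf{Z}(G)$) is a factorization of $U^{-1}B$ of length $k$, so $\max\mathsf{L}(U^{-1}B) \ge k$, and combined with the upper bound we get equality. This part is essentially bookkeeping with the factorization monoid and should be routine.

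For part (2), the plan is to pick a witness $B \in \mathcal{A}_{k+1}(G)$ with $|B| = \mathsf{D}_{k+1}(G)$ together with a minimal zero-sum divisor $U \mid B$ achieving $|U| = M$. By part (1), $\max\mathsf{L}(U^{-1}B) \le k$, so $U^{-1}B \in \mathcal{M}_k(G)$, hence $|U^{-1}B| \le \mathsf{D}_k(G)$ by the characterization $\mathsf{D}_k(G) = \max\{|B'| \colon B' \in \mathcal{M}_k(G)\}$ recalled in the preliminaries. Since $|B| = |U| + |U^{-1}B| \le M + \mathsf{D}_k(G)$ and $|B| = \mathsf{D}_{k+1}(G)$, the bound follows. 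The only subtlety is that $M$ is defined as a minimum over all such $B$ and all such divisors $U$; I need the existence of at least one attaining pair, which is immediate since $\mathcal{A}_{k+1}(G)$ contains a maximal-length element and every non-empty zero-sum sequence has at least one minimal zero-sum divisor.

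Part (3) is where the real argument lies, and I expect the main obstacle to be correctly balancing the two competing bounds. Fix $\ell \in \mathbb{N}$ and let $B \in \mathcal{A}_{k+1}(G)$ with $|B| = \mathsf{D}_{k+1}(G)$. I want to exhibit a minimal zero-sum divisor $U \mid B$ with $|U| \le \ell$, because then part (2) would yield $\mathsf{D}_{k+1}(G) \le \mathsf{D}_k(G) + \ell$. If no such short divisor exists, then $B$ has no zero-sum subsequence of length in $[1,\ell]$ (any such subsequence would contain a short minimal zero-sum divisor), which forces $|B| \le \mathsf{s}_{\le \ell}(G) - 1$ by the definition of $\mathsf{s}_{\le \ell}(G)$. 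In either case $\mathsf{D}_{k+1}(G) \le \max\{\mathsf{D}_k(G) + \ell, \mathsf{s}_{\le \ell}(G) - 1\}$, which is the claim. For the ``in particular'' statement I would specialize to $\ell = \exp(G)$: then $\mathsf{s}_{\le \exp(G)}(G) = \eta(G)$, so the bound reads $\mathsf{D}_{k+1}(G) \le \max\{\mathsf{D}_k(G) + \exp(G),\, \eta(G) - 1\}$, and the hypothesis $\mathsf{D}_k(G) \ge \eta(G) - 1 - \exp(G)$ is exactly what forces $\mathsf{D}_k(G) + \exp(G) \ge \eta(G) - 1$, so the first term dominates and we obtain $\mathsf{D}_{k+1}(G) \le \mathsf{D}_k(G) + \exp(G)$. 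The delicate point throughout is the dichotomy in the general bound: one must argue cleanly that the \emph{absence} of a short minimal zero-sum divisor of a maximal-length $B$ is equivalent to $B$ having no zero-sum subsequence of length at most $\ell$, and I would make this explicit rather than leave it to the reader.
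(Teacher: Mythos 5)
Your proposal is correct and follows essentially the same route as the paper: part (1) via the correspondence between factorizations of $U^{-1}B$ and factorizations of $B$ containing $U$, part (2) by removing a shortest minimal zero-sum divisor from a maximal-length witness, and part (3) by the dichotomy on whether a divisor of length at most $\ell$ exists (the paper phrases this as a proof by contradiction, which is the same argument). No gaps.
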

\begin{proof}
1. We observe that there is a one-to-one correspondence
between the set of all factorizations of $B$ that contain $U$ 
and the set of all factorizations of the zero-sum sequence $U^{-1}B$ (given by removing one $U$ from the factorization). 
This implies the claim. \\
2. Let $U$ be a minimal zero-sum sequence over $G$ and let $B$ be a zero-sum sequence over $B$ with $\max \mathsf{L}(B)=k+1$, $|B|= \mathsf{D}_{k+1}(G)$, and $U \mid B$ such that $|U|=M$.
By 1.\ we get $\max\mathsf{L}(U^{-1}B)\le k$. Thus, $|U^{-1}B| \le \mathsf{D}_k(G)$.
This implies that $\mathsf{D}_{k+1}(G)= |B| = |U^{-1}B| + |U| \le   \mathsf{D}_k(G) + M$.\\
3. Let $B \in \mathcal{A}_{k+1}(G)$ with $|B| = \mathsf{D}_{k+1}(G)$. Suppose $|B|> \max \{\mathsf{D}_k(G) + \ell, \mathsf{s}_{\le \ell}(G)-1\}$.
By definition of $\mathsf{s}_{\le \ell}(G)$ we know that there exists some $U$
minimal zero-sum sequence with  $U \mid B$  and $|U|\le \ell$.
By 2., observe that $M$ is at most $\ell$, we get $\mathsf{D}_{k+1}(G)\le \mathsf{D}_k(G) + \ell$, a contradiction.
The ``in particular''-statement is merely the special case $\ell  = \exp(G)$.
\end{proof}

Moreover, we make frequent use of the following result, which allows us to derive upper bounds on $\mathsf{D}_k(G)$ in terms of the constants $\mathsf{s}_{\le k}(G)$; it expands on well-known results (cf. Remark \ref{ub_rem}).

\begin{proposition}
\label{ub_prop_lengthlb}
Let $G$ be a finite abelian group. Let $n \in \mathbb{N}_0$ and let ${\overline{\ell}}=(\ell_1,\dots,  \ell_n) \in \mathbb{N}^n$ with $\ell_{i}< \ell_{i+1}$ for each $i \in [1, n-1]$.
For $m \in \mathbb{N}_0$, and $i \in [1,n]$, we recursively define
\[k_{i}^{\overline{\ell}}(m) = \max\left\{0 , \left \lceil \frac{ m - ( \sum_{j=1}^{i-1} k_j^{\overline{\ell}}(m)\ell_j ) - \mathsf{s}_{\le \ell_{i}}(G) + 1}{\ell_i}\right  \rceil \right\}\]
and
\[k_{n+1}^{\overline{\ell}}(m)= \max \left\{0, \left\lceil \frac{ m - \sum_{j=1}^{n} k_j^{\overline{\ell}}(m)\ell_j }{\mathsf{D}(G)}\right \rceil \right\}.\]
\begin{enumerate}
\item For $B$ a zero-sum sequence over $G$, we have $\max \mathsf{L}(B) \ge \sum_{i=1}^{n+1} k_i^{\overline{\ell}}(|B|)$.
\item Let $k \in \mathbb{N}$  and let $M\in \mathbb{N}$ be maximal such that $\sum_{i=1}^{n+1} k_i^{\overline{\ell}}(M) \le k$.
Then, $\mathsf{D}_{k}(G)\le M$.
\end{enumerate}
\end{proposition}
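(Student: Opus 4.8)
The plan is to establish (1) directly, by greedily peeling off zero-sum subsequences of controlled length, and then to read off (2) from (1) together with the maximality in the definition of $M$; somewhat surprisingly, (2) will require no monotonicity of the $k_i^{\overline{\ell}}$.

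For (1) I fix $m=|B|$ and abbreviate $k_i=k_i^{\overline{\ell}}(m)$. The guiding observation is that $\max\mathsf{L}$ is superadditive on $\mathcal{B}(G)$ and that every non-empty zero-sum sequence has $\max\mathsf{L}\ge 1$; hence if I can write $B=Z_1\cdots Z_s\,R$ with the $Z_j$ non-empty zero-sum sequences and $R\in\mathcal{B}(G)$, then $\max\mathsf{L}(B)\ge s+\max\mathsf{L}(R)$. I would produce the $Z_j$ in $n$ stages: in stage $i\in[1,n]$ I remove, one at a time, $k_i$ non-empty zero-sum subsequences each of length at most $\ell_i$, and finally I keep the untouched remainder $R$.

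The only point requiring care is that stage $i$ can actually be performed $k_i$ times. Before the $t$-th removal in stage $i$ the total length removed so far is at most $\sum_{j<i}k_j\ell_j+(t-1)\ell_i$, so the length still available is at least $A_i-(t-1)\ell_i$ with $A_i=m-\sum_{j<i}k_j\ell_j$; by the definition of $\mathsf{s}_{\le\ell_i}(G)$, a further removal of a zero-sum subsequence of length in $[1,\ell_i]$ is possible whenever this quantity is at least $\mathsf{s}_{\le\ell_i}(G)$. Taking the worst case $t=k_i$ and unwinding the ceiling --- when $k_i\ge 1$ one has $k_i=\lceil(A_i-\mathsf{s}_{\le\ell_i}(G)+1)/\ell_i\rceil$, whence $(k_i-1)\ell_i\le A_i-\mathsf{s}_{\le\ell_i}(G)$ by integrality --- yields precisely $A_i-(k_i-1)\ell_i\ge\mathsf{s}_{\le\ell_i}(G)$, so all $k_i$ removals go through. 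After stage $n$ the remainder $R$ satisfies $|R|\ge A_{n+1}=m-\sum_{i\le n}k_i\ell_i$, and since every atom of $\mathcal{B}(G)$ has length at most $\mathsf{D}(G)$ we get $\max\mathsf{L}(R)\ge\lceil |R|/\mathsf{D}(G)\rceil\ge k_{n+1}$. Summing gives $\max\mathsf{L}(B)\ge\sum_{i\le n}k_i+k_{n+1}=\sum_{i=1}^{n+1}k_i^{\overline{\ell}}(m)$, which is (1). I expect this feasibility computation --- matching the offsets $-\mathsf{s}_{\le\ell_i}(G)+1$ and the ceilings so the bound comes out exactly tight --- to be the main (if modest) obstacle.

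For (2) write $F(m)=\sum_{i=1}^{n+1}k_i^{\overline{\ell}}(m)$, so that $M=\max\{m\in\mathbb{N}\colon F(m)\le k\}$. The key realization is that, although the individual $k_i^{\overline{\ell}}(m)$ need not be monotone in $m$ (they genuinely oscillate), one does not need monotonicity: for any $B\in\mathcal{M}_k(G)$ part (1) gives $F(|B|)\le\max\mathsf{L}(B)\le k$, so $|B|$ belongs to the set defining $M$ and therefore $|B|\le M$; maximizing over $B$ gives $\mathsf{D}_k(G)\le M$. Finally I would check that $M$ is well defined: the set is non-empty since for non-trivial $G$ all terms $k_i^{\overline{\ell}}(1)$ with $i\le n$ vanish while the last equals $1$, so $F(1)=1\le k$; and it is bounded above because the inequality $\sum_{i\le n}k_i\ell_i+k_{n+1}\mathsf{D}(G)\ge m$, immediate from the definition of $k_{n+1}^{\overline{\ell}}$, forces $F(m)\ge m/\max\{\ell_1,\dots,\ell_n,\mathsf{D}(G)\}\to\infty$.
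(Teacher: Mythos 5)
Your proof is correct, and the underlying mechanism is the same as the paper's: greedily extract, for each $i$ in increasing order, $k_i^{\overline{\ell}}(m)$ zero-sum subsequences of length at most $\ell_i$ (feasible by the definition of $\mathsf{s}_{\le \ell_i}(G)$ and the exact unwinding of the ceiling, which you carry out correctly), then bound the factorization length of the remainder via $\mathsf{D}(G)$, and add everything up by superadditivity of $\max\mathsf{L}$. The organizational difference is worth noting. The paper proves part 1 by induction on $n$, and in the inductive step it must compare $\sum_i k_i^{\overline{\ell'}}(|B'|)$ with $\sum_i k_i^{\overline{\ell'}}(|B|-k_1^{\overline{\ell}}(|B|)\ell_1)$; for this, and again for part 2 (where it argues that $|B|>M$ forces $\sum_i k_i^{\overline{\ell}}(|B|)>k$), it invokes the assertion that $m\mapsto\sum_{i=1}^{n+1}k_i^{\overline{\ell}}(m)$ is non-decreasing, which it states without proof. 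Your direct, non-inductive bookkeeping only ever uses lower bounds on the length still available, so it never needs to evaluate the $k_i^{\overline{\ell}}$ at a shifted argument; and your part 2 runs the logic in the contrapositive direction --- from $B\in\mathcal{M}_k(G)$ and part 1 you get $\sum_i k_i^{\overline{\ell}}(|B|)\le k$, hence $|B|\le M$ by the very definition of $M$ as a maximum --- which dispenses with monotonicity altogether. Together with your check that $M$ is well defined (non-empty and bounded above), this makes your write-up marginally more self-contained than the paper's, at the cost of a slightly longer feasibility computation in part 1.
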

\begin{proof}
We note that $\sum_{i=1}^{n+1} k_i^{\overline{\ell}}(\cdot)$ is a non-decreasing function; yet, note that this is not true for each summand individually.

\noindent
1. Let $B$ be a zero-sum sequence over $G$.
We induct on $n$.
Suppose $n=0$. We have to show that $\max \mathsf{L}(B) \ge \lceil |B| / \mathsf{D}(G)\rceil$. Let $B=A_1\dots A_{k}$ with minimal zero-sum sequences $A_i \in \mathcal{A}(G)$.
We have $|B|= \sum_{i=1}^k |A_i| \le k \mathsf{D}(G)$. Thus, $k \ge  |B| / \mathsf{D}(G)$ and the claim is established.

Let $n \ge 1$.
First suppose $|B|< \mathsf{s}_{\le \ell_{1}}(G)$. Then $k_1^{\overline{\ell}}(|B|)=0$.
By induction hypothesis, applied to ${\overline{\ell'}}=(\ell_2,  \dots, \ell_{n})$ we get that
$\max \mathsf{L}(B) \ge \sum_{i=1}^{n} k_i^{\overline{\ell'}}(|B|)$.
Noting that $k_i^{\overline{\ell'}}(|B|) = k_{i+1}^{\overline{\ell}}(|B|)$ for each $i \in [1,n]$, we have
$\max \mathsf{L}(B) \ge \sum_{i=2}^{n+1} k_i^{\overline{\ell}}(|B|)=\sum_{i=1}^{n+1} k_i^{\overline{\ell}}(|B|)$.

Second suppose $|B|\ge \mathsf{s}_{\le \ell_{1}}(G)$. Thus we have $|B|\ge ( k_1^{\overline{\ell}}(|B|) - 1 ) \ell_1 + \mathsf{s}_{\le \ell_1}(G)$, and by the definition of $\mathsf{s}_{\le \ell_{1}}(G)$ it follows that there exist $U_1, \dots, U_{k_1^{\overline{\ell}}(|B|)}\in \mathcal{A}(G)$ with $|U_i|\le \ell_1$ such that $U_1 \dots U_{k_1^{\overline{\ell}}(|B|)}\mid B$. We set $B'= (U_1 \dots U_{k_1^{\overline{\ell}}(|B|)})^{-1}B$ and note that $|B'|\ge |B|- k_1^{\overline{\ell}}(|B|)\ell_1 $.
By induction hypothesis, applied to $\overline{\ell'}=(\ell_2,  \dots, \ell_{n})$ and $B'$,
we get that
\[
\max \mathsf{L}(B')\ge \sum_{i=1}^{n} k_i^{\overline{\ell'}}(|B|')\ge \sum_{i=1}^{n} k_i^{\overline{\ell'}}(|B| - k_1^{\overline{\ell}}(|B|)\ell_1)= \sum_{i=2}^{n+1} k_i^{\overline{\ell}}(|B|);
\]
where the second inequality holds, since  $\sum_{i=1}^{n} k_i^{\overline{\ell}}(\cdot)$ is non-decreasing.
Since we know that $\max \mathsf{L}(B) \ge \max \mathsf{L}(B') +  k_1^{\overline{\ell}}(|B|)$, the claim follows.

\noindent
2. Let $B$ be a zero-sum sequence over $G$ such that $|B| > M$. Then $\sum_{i=1}^{n+1} k_i^{\overline{\ell}}(|B|) > k$.
Thus, by 1.\ we have $\max \mathsf{L}(B) > k$, and the claim follows.
\end{proof}
In view of the results on $\mathsf{s}_{\le k}(G)$ recalled in Section \ref{prel}, it does not make much sense to consider this lemma other than for $\exp(G)\le \ell_1 < \dots< \ell_n \le  \mathsf{D}(G)$.
Indeed, also to consider $\ell_n = \mathsf{D}(G)$  only adds redundancy.

We point out the following special cases contained in this result (see \cite[Proposition 1]{halterkoch92}, \cite[Lemma 2.4]{delormeetal01} and \cite[Lemma 6.1.3]{geroldingerhalterkochBOOK}).

\begin{remark}
\label{ub_rem}
Let $G$ be a finite abelian group.
\begin{enumerate}
\item Let $k \in \mathbb{N}$. By Proposition \ref{ub_prop_lengthlb}.2 with $n=0$, we get $\mathsf{D}_k(G)\le k \mathsf{D}(G)$.
\item Let $B$ be a zero-sum sequence over $G$. By Proposition \ref{ub_prop_lengthlb}.1 with $n=1$ and $\ell_1 \in [\exp(G), \mathsf{D}(G)-1]$, we get
$\max \mathsf{L}(B)\ge (|B|-\mathsf{s}_{\le \ell_1}(G)+1)/\ell_1$; in particular $\max \mathsf{L}(B)\ge (|B|-\eta(G)+1)/\exp(G)$.
\item Let $k \in \mathbb{N}$. By Proposition \ref{ub_prop_lengthlb}.2 with $n=1$ and $\ell_1 \in [\exp(G), \mathsf{D}(G)-1]$, we get $\mathsf{D}_k(G)\le (k-1) \ell_1 + \max \{\mathsf{D}(G), \mathsf{s}_{\le \ell_1}(G) - \ell_1\}$; in particular $\mathsf{D}_k(G)\le (k-1) \exp(G)+ \max \{\mathsf{D}(G), \eta(G)-\exp(G)\}$.
\end{enumerate}
\end{remark}

An obstacle impeding the application of Proposition \ref{ub_prop_lengthlb}, in its general form, is the fact that, for general $G$,
little is known about $\mathsf{s}_{\le \ell}(G)$ for $\exp(G)< \ell < \mathsf{D}(G)$.
For precise results in special cases see \cite{balasbhowmik07,bhowmikschalge07,delormeetal01}.
In Section \ref{e2g} we obtain some information on these invariants for elementary $2$-groups, which we then use in our investigations of $k$-wise Davenport constants of elementary $2$-groups.
Below, we summarize information that can be obtained on these constants by direct application of a method first used in \cite{olson69_2,vanemdeboas69} to determine $\eta(C_p \oplus C_p)$.

\begin{lemma}
\label{ub_lem_D^k}
Let $G$ be a finite abelian group. Let $n \in \mathbb{N}$.
Let $m= \max \{ \lfloor \lfloor \mathsf{D}(G\oplus C_n)/n \rfloor n/2 \rfloor,  \lfloor \mathsf{D}(G)/n \rfloor n \}$.
Then, $\mathsf{s}_{\le m} ( G )\le \mathsf{D} ( G \oplus C_n )$.
\end{lemma}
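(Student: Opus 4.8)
The plan is to use the lifting technique of Olson and van Emde Boas, transferring a sequence over $G$ to one over $G\oplus C_n$ so as to convert the Davenport constant of $G\oplus C_n$ into information about \emph{short} zero-sum subsequences over $G$. Throughout, write $D'=\mathsf{D}(G\oplus C_n)$ and $D=\mathsf{D}(G)$, so that the two quantities in the maximum defining $m$ are $\lfloor \lfloor D'/n\rfloor n/2\rfloor$ and $\lfloor D/n\rfloor n$. Since $\mathsf{s}_{\le m}(G)$ is the threshold length forcing a zero-sum subsequence of length in $[1,m]$, it suffices to show that every sequence $S$ over $G$ with $|S|\ge D'$ has a non-empty zero-sum subsequence of length at most $m$.

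First I would fix a generator $e$ of $C_n$, so $\ord(e)=n$, and lift $S$ to the sequence $\tilde S$ over $G\oplus C_n$ obtained by replacing each term $g$ of $S$ with $(g,e)$; then $|\tilde S|=|S|\ge D'=\mathsf{D}(G\oplus C_n)$. By the defining property of the Davenport constant, $\tilde S$ has a non-empty zero-sum subsequence, hence a minimal zero-sum subsequence $\tilde T$, which necessarily satisfies $|\tilde T|\le D'$. Let $T$ be the subsequence of $S$ supported on the same index set. Reading off the two coordinates of $\sigma(\tilde T)=0$ gives $\sigma(T)=0$ in $G$ and $|T|\,e=0$ in $C_n$; as $\ord(e)=n$, the latter forces $n\mid |T|$. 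Thus $T$ is a non-empty zero-sum subsequence of $S$ with $n\mid|T|$, and combining the length bound with divisibility by $n$ yields $|T|\le \lfloor D'/n\rfloor n$.

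The remaining step is a dichotomy matching the two entries of the maximum. If $|T|\le m$, we are done, since $T$ is itself the desired short zero-sum subsequence. Otherwise $|T|>m\ge \lfloor D/n\rfloor n$; because $|T|$ is a positive multiple of $n$ exceeding the largest multiple of $n$ that is at most $D$, we get $|T|\ge \lfloor D/n\rfloor n+n>D=\mathsf{D}(G)$. Hence $T$ is a zero-sum sequence over $G$ longer than the Davenport constant, so it is not minimal and factors as $T=T_1T_2$ with $T_1,T_2$ non-empty zero-sum sequences. Taking the shorter of the two produces a non-empty zero-sum subsequence of length at most $\lfloor |T|/2\rfloor\le \lfloor(\lfloor D'/n\rfloor n)/2\rfloor\le m$, which completes the argument.

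The point I would handle most carefully is that the two terms in the maximum defining $m$ are tailored precisely to these two cases: the term $\lfloor D/n\rfloor n$ guarantees that once $|T|$ exceeds $m$ it also exceeds $\mathsf{D}(G)$, which is exactly what licenses the splitting, while the term $\lfloor\lfloor D'/n\rfloor n/2\rfloor$ is what absorbs the shorter half produced by that splitting. The genuine group-theoretic content lies entirely in the lifting, which converts $\mathsf{D}(G\oplus C_n)$ into a zero-sum subsequence over $G$ of length divisible by $n$; beyond that I expect no serious obstacle, only the slightly fiddly bookkeeping with the floors and the divisibility by $n$ (and the degenerate cases $n=1$ or $G$ trivial, which the same argument covers).
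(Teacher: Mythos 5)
Your proof is correct and follows essentially the same route as the paper's: lift each term $g$ to $g+e$ in $G\oplus C_n$, extract a zero-sum subsequence of length at most $\mathsf{D}(G\oplus C_n)$ and divisible by $n$, and then either accept it or split it into two non-empty zero-sum pieces and keep the shorter half. The bookkeeping with the two terms of the maximum matches the paper's argument exactly.
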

\begin{proof}
Let $S$ be a sequence over $G$ with $|S| \ge \mathsf{D} ( G \oplus C_n )$. Let $e\in G \oplus C_n $ such that
$G \oplus C_n = G + \langle e \rangle$.
We consider the map $\iota: G \to G\oplus C_n $ defined via $\iota(g)= g+e$.
Since $|\iota(S)|\ge \mathsf{D} ( G \oplus C_n )$, there exists a non-empty subsequence $B \mid S$ with $|B| \le \mathsf{D}(G\oplus C_n)$ such that $\sigma(\iota(B))=0$.
We have $n \mid |B|$, thus  $|B| \le \lfloor \mathsf{D}(G\oplus C_n)/n \rfloor n$, and $\sigma(B)=0$. If $|B|\le \lfloor \mathsf{D}(G)/n \rfloor n $, we are done. Thus, assume this is not the case. Then $|B| > \mathsf{D}(G)$ and $B$
is not a minimal zero-sum sequence. 
Consequently $B=B_1B_2$ with non-empty zero-sum sequences $B_1$ and $B_2$.  At least one of these two sequences has length at most $|B|/2$, and the claim follows.
\end{proof}

Of course, to apply this result in concrete situations, one needs knowledge of the size of $\mathsf{D} ( G \oplus C_n )$.
For example, this result can be applied for $p$-groups and $n$ a power of $p$ or cyclic groups.

For illustration, we give the following result for elementary $p$-groups (cf.~\cite{delormeetal01} for related results).

\begin{proposition}
Let $p$ be a prime and $r \in \mathbb{N}_{\ge 2}$. Let  $k \in \mathbb{N}$ and let $m \in \mathbb{N}$ such that $r(p-1)+1< 2p^m$.
Then
\[
\mathsf{D}_k(C_p^r)\le \min \{(k(r-1)+1)p - r+1, (k-1)p^m + r(p-1)+1\}.
\]
\end{proposition}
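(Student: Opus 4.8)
The plan is to prove the two asserted upper bounds separately, since $\mathsf{D}_k(C_p^r)\le \min\{A,B\}$ is equivalent to $\mathsf{D}_k(C_p^r)\le A$ together with $\mathsf{D}_k(C_p^r)\le B$. Throughout I will use that $C_p^r$ and the relevant overgroups are $p$-groups, so that $\mathsf{D}$ equals $\mathsf{D}^{\ast}$; concretely $\mathsf{D}(C_p^s)=s(p-1)+1$ and $\mathsf{D}(C_p^r\oplus C_{p^m})=r(p-1)+p^m$. The common mechanism for each bound is: first invoke Lemma \ref{ub_lem_D^k} with a suitable $n$ to bound $\mathsf{s}_{\le \ell}(C_p^r)$ for a suitable $\ell$, and then feed this into Remark \ref{ub_rem}.3 with $\ell_1=\ell$. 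In the parameter ranges where $\ell_1$ falls outside $[\exp(G),\mathsf{D}(G)-1]$, I will instead fall back on the trivial bound $\mathsf{D}_k(G)\le k\mathsf{D}(G)$ of Remark \ref{ub_rem}.1 and check directly that the claimed bound is then automatically satisfied.

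For the bound $(k(r-1)+1)p-r+1$ I first treat $2\le r\le p$. Applying Lemma \ref{ub_lem_D^k} with $n=p$, so that $G\oplus C_n=C_p^{r+1}$, the quantity $m_0$ of that lemma evaluates (using $r\le p$ to resolve the floors) to $\max\{\lfloor rp/2\rfloor,(r-1)p\}=(r-1)p$, whence $\mathsf{s}_{\le (r-1)p}(C_p^r)\le \mathsf{D}(C_p^{r+1})=(r+1)(p-1)+1$. Since $p\le (r-1)p\le r(p-1)=\mathsf{D}(G)-1$ for $2\le r\le p$, I may apply Remark \ref{ub_rem}.3 with $\ell_1=(r-1)p$; the maximum appearing there simplifies to $\mathsf{D}(G)=r(p-1)+1$, because $(r+1)(p-1)+1-(r-1)p=2p-r\le r(p-1)+1$ for $r\ge 2$. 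This yields exactly $(k-1)(r-1)p+r(p-1)+1=(k(r-1)+1)p-r+1$. For $r\ge p+1$ a short computation gives that the claimed bound equals $k\mathsf{D}(G)+(k-1)(r-p-1)\ge k\mathsf{D}(G)\ge \mathsf{D}_k(G)$, so it holds trivially.

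For the bound $(k-1)p^m+r(p-1)+1$ I apply Lemma \ref{ub_lem_D^k} with $n=p^m$, so that $G\oplus C_n=C_p^r\oplus C_{p^m}$ and $\mathsf{D}(G\oplus C_n)=r(p-1)+p^m$. The crucial point is that the hypothesis $r(p-1)+1<2p^m$ forces the lemma's quantity $m_0$ to satisfy $m_0\le p^m$: the condition $\lfloor (r(p-1)+1)/p^m\rfloor\, p^m\le p^m$ on the second floor term is precisely equivalent to $r(p-1)+1<2p^m$, while the first term is at most $p^m$ because $r(p-1)<2p^m$ makes $\lfloor (r(p-1)+p^m)/p^m\rfloor\le 2$ and the halving then compensates. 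Since $\mathsf{s}_{\le\,\cdot}$ is non-increasing, $m_0\le p^m$ yields $\mathsf{s}_{\le p^m}(C_p^r)\le \mathsf{s}_{\le m_0}(C_p^r)\le r(p-1)+p^m$. When $p^m\le r(p-1)=\mathsf{D}(G)-1$ I apply Remark \ref{ub_rem}.3 with $\ell_1=p^m$; the maximum there is $\max\{r(p-1)+1,\,r(p-1)+p^m-p^m\}=r(p-1)+1$, giving the desired $(k-1)p^m+r(p-1)+1$. When instead $p^m\ge \mathsf{D}(G)$, the claimed bound $(k-1)p^m+\mathsf{D}(G)\ge (k-1)\mathsf{D}(G)+\mathsf{D}(G)=k\mathsf{D}(G)\ge \mathsf{D}_k(G)$ holds trivially.

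I expect the main obstacle to be the bookkeeping with the nested floor functions defining $m_0$ in Lemma \ref{ub_lem_D^k}, together with keeping track of the admissible range $\ell_1\in[\exp(G),\mathsf{D}(G)-1]$ in Remark \ref{ub_rem}.3. The key qualitative insight, and the exact role of the hypothesis $r(p-1)+1<2p^m$, is that this inequality is precisely the condition making the halving step in $m_0$ pull its value back down to at most $p^m$, so that the transfer-type bound on $\mathsf{s}_{\le p^m}(C_p^r)$ coming from $\mathsf{D}(C_p^r\oplus C_{p^m})$ becomes usable; outside the ranges where the Lemma/Remark machinery applies, one simply observes that the claimed bounds already dominate the trivial bound $k\mathsf{D}(G)$.
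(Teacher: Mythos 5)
Your proof is correct and follows essentially the same route as the paper: apply Lemma \ref{ub_lem_D^k} with $n=p$ (resp.\ $n=p^m$) to bound $\mathsf{s}_{\le (r-1)p}(C_p^r)$ (resp.\ $\mathsf{s}_{\le p^m}(C_p^r)$), then feed the result into Remark \ref{ub_rem}.3. The only difference is that you carry out the floor-function bookkeeping and the edge cases ($r>p$, $p^m\ge \mathsf{D}(G)$) explicitly via the trivial bound $k\mathsf{D}(G)$, which the paper leaves implicit.
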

\begin{proof}
We show that both $(k(r-1)+1)p - r+1$ and $(k-1)p^m + r(p-1)+1$ are
upper bounds for $\mathsf{D}_k(C_p^r)$.

First, applying Lemma \ref{ub_lem_D^k} with $n=p$, we get that $\mathsf{s}_{\le (r-1)p}(C_p^r)\le (r+1)(p-1)+1$.
Thus, by Remark \ref{ub_rem}.3 with $\ell_1= (r-1)p$ we have $\mathsf{D}_k(C_p^r)\le (k-1)(r-1)p+ r(p-1)+1= (k(r-1)+1)p - r+1$.

Second, applying Lemma \ref{ub_lem_D^k} with $n=p^m$, we get that $\mathsf{s}_{\le p^m}(C_p^r)\le r(p-1) + (p^m - 1) + 1$.
Thus, by Remark \ref{ub_rem}.3 with $\ell_1= p^m$ we have $\mathsf{D}_k(C_p^r)\le (k-1)p^m+ r(p-1)+1$.
\end{proof}
We point out that it is known that for $r=2$ the former and the latter, with $m=1$, bound is sharp.
A more general class of groups for which this method works well are $p$-groups where the exponent is ``large'' relative to the order of the group.
Moreover, for cyclic groups the bound obtained via this method is sharp as well. We refer to Remark \ref{as_rem} for details.

As indicated above, this is not a novel method of proving these results, only a different way of phrasing the original arguments. We include it only to illustrate that there seems to be no loss of precision when dissecting the arguments in the way we did.

Another way to obtain upper bounds on $\mathsf{D}_k(G)$ is the inductive method (see, e.g., \cite[Section 5.8]{geroldingerhalterkochBOOK}).
We recall the following result. The general statement is due to \cite{delormeetal01}; for the ``in particular''-statement in case $\ell = \exp(G/G')$ see also \cite[Lemma 6.1.3]{geroldingerhalterkochBOOK}. Note that, for $k=1$, this result encodes certain applications of the inductive method in the investigation of the Davenport constant, e.g., the classical argument used to determine the Davenport constant for groups of rank $2$.

\begin{theorem}
\label{ub_thm_ind}
Let $G$ be a finite abelian group and $G'\subset G$ a subgroup. Let $k , \ell \in \mathbb{N}$.
Then $\mathsf{D}_k(G) \le \mathsf{D}_{\mathsf{D}_k(G')}(G/G')$.
In particular, $\mathsf{D}_k(G) \le (\mathsf{D}_k(G')-1)\ell  +  \max \{\mathsf{D}(G/G'), \mathsf{s}_{\le \ell}(G/G') - \ell\}$.
\end{theorem}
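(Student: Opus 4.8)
The plan is to exploit the canonical epimorphism $\varphi\colon G \to G/G'$, together with its induced monoid homomorphism $\mathcal{F}(G)\to \mathcal{F}(G/G')$ (which we also denote by $\varphi$ and which preserves length), in order to transport the zero-sum structure between the two levels in two stages. To prove the first inequality it suffices, by the definition of $\mathsf{D}_k(G)$, to show that every $S \in \mathcal{F}(G)$ with $|S|\ge \mathsf{D}_{\mathsf{D}_k(G')}(G/G')$ is divisible by a product of $k$ non-empty zero-sum sequences over $G$.

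First I would apply the definition of $\mathsf{D}_{\mathsf{D}_k(G')}(G/G')$ to the sequence $\varphi(S)$ over $G/G'$, whose length equals $|S|\ge \mathsf{D}_{\mathsf{D}_k(G')}(G/G')$. This produces disjoint non-empty subsequences $T_1,\dots,T_{\mathsf{D}_k(G')}$ of $S$ such that each $\varphi(T_j)$ is a zero-sum sequence over $G/G'$, equivalently $\sigma(T_j)\in G'$ for every $j$. Second, I would form the sequence $R=\prod_{j=1}^{\mathsf{D}_k(G')}\sigma(T_j)$ over $G'$, which has length $\mathsf{D}_k(G')$; by the definition of $\mathsf{D}_k(G')$, it is divisible by a product of $k$ non-empty zero-sum sequences over $G'$. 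Concretely, this yields pairwise disjoint non-empty index sets $J_1,\dots,J_k\subseteq[1,\mathsf{D}_k(G')]$ with $\sum_{j\in J_i}\sigma(T_j)=0$ for each $i$. Setting $B_i=\prod_{j\in J_i}T_j$, each $B_i$ is a non-empty zero-sum subsequence of $S$, since $\sigma(B_i)=\sum_{j\in J_i}\sigma(T_j)=0$, and the $B_i$ are disjoint because the $T_j$ are disjoint in $S$ and the $J_i$ are disjoint. This supplies the required $k$ disjoint non-empty zero-sum subsequences and establishes $\mathsf{D}_k(G)\le \mathsf{D}_{\mathsf{D}_k(G')}(G/G')$.

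Finally, the ``in particular''-statement follows by bounding $\mathsf{D}_{\mathsf{D}_k(G')}(G/G')$ through Remark \ref{ub_rem}.3 applied to the group $G/G'$, with the index $\mathsf{D}_k(G')$ in place of $k$ and with $\ell_1=\ell$; this gives $(\mathsf{D}_k(G')-1)\ell+\max\{\mathsf{D}(G/G'),\mathsf{s}_{\le\ell}(G/G')-\ell\}$, and the values of $\ell$ outside $[\exp(G/G'),\mathsf{D}(G/G')-1]$ render the bound either trivial (the maximum equals $\infty$) or already subsumed by Remark \ref{ub_rem}.1. I do not anticipate a serious obstacle: the argument is a clean two-level lifting, and the only point demanding care is the bookkeeping of disjointness and non-emptiness across the passage from $S$ to $R$ and back, which becomes transparent once the correspondence between zero-sum subsequences of $R$ and the index sets $J_i$ is made explicit.
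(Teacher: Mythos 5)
Your argument is correct. For the main inequality the paper itself gives no proof but defers to \cite[Proposition 2.6]{delormeetal01}; what you have written out is precisely the standard ``inductive method'' argument that underlies that citation: push $S$ down to $G/G'$ to extract $\mathsf{D}_k(G')$ disjoint subsequences $T_j$ with $\sigma(T_j)\in G'$, then apply the definition of $\mathsf{D}_k(G')$ to the length-$\mathsf{D}_k(G')$ sequence $\prod_j \sigma(T_j)$ over $G'$ and pull the resulting $k$ disjoint zero-sum blocks back up to $S$. The bookkeeping of disjointness and non-emptiness is handled correctly, and the ``in particular'' statement is obtained exactly as in the paper, via Remark \ref{ub_rem}.3 applied to $G/G'$ with $\mathsf{D}_k(G')$ in place of $k$; your additional remark disposing of the values of $\ell$ outside $[\exp(G/G'),\mathsf{D}(G/G')-1]$ is a point the paper glosses over, and your resolution of it is sound. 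So your proof is a correct, self-contained version of what the paper handles by citation, and otherwise follows the same route.
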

\begin{proof}
The inequality $\mathsf{D}_k(G) \le \mathsf{D}_{\mathsf{D}_k(G')}(G/G')$ is proved in \cite[Proposition 2.6]{delormeetal01} (in a more general context).
Using the bound given in Remark \ref{ub_rem}.3 the ``in particular'' statement follows.
\end{proof}
In view of the proof of this result, it is apparent that the ``in particular'' statement can be strengthened by using stronger results on the generalized Davenport constants of $G/G'$.
In Section \ref{e2g} we give, for elementary $2$-groups, another bound of this type, yet exploiting the special structure of these groups.

\section{Explicit lower bounds}
\label{elb}

In this section we establish and recall explicit lower bounds for $\mathsf{D}_k(G)$.
By explicit we mean that the bounds for $\mathsf{D}_k(G)$ do not depend on $\mathsf{D}_{k'}(G)$ for $k'< k$, opposed to the ones that we obtain in Section \ref{rlb}.

First we recall some results regarding lower bounds for $\mathsf{D}_k(G)$ (see \cite{halterkoch92} and \cite[Section 6.1]{geroldingerhalterkochBOOK}).
Let $G=G_1\oplus G_2$ and let $k_1, k_2 \in \mathbb{N}$. Then
\begin{equation}
\label{elb_eq_1}
\mathsf{D}_{k_1+k_2-1}(G)\ge (\mathsf{D}_{k_1}(G_1)-1) + (\mathsf{D}_{k_2}(G_2)-1) +1.
\end{equation}
Using the well-known fact that $\mathsf{D}_{k}(C_n)\ge kn$, obtainable by considering the sequence $g^{kn}$ for some element of order $n$, it follows that for
$G\cong G^{-} \oplus C_{\exp(G)}$, one has
\begin{equation}
\label{elb_eq_2}
\begin{split}
\mathsf{D}_k(G)\ge \mathsf{D}(G^-)-1+ k \exp(G) &  \ge \mathsf{D}^{\ast}(G^-)-1+ k \exp(G) \\
&  =\mathsf{D}^{\ast}(G) +(k-1) \exp(G).
\end{split}
\end{equation}
It is known that for certain types of groups (see \cite{halterkoch92,delormeetal01}), namely for groups of rank at most $2$ and more generally for groups that have a ``large'' exponent relative to the order of the group, this bound is optimal (see \cite[Theorem 6.1.5]{geroldingerhalterkochBOOK} and cf.~Remark \ref{as_rem} for details).
However, by \cite[Lemma 3.7]{delormeetal01} (also cf.~Remark \ref{as_rem}) it is known that this bound is not always optimal.

Here we present a different construction of a lower bound, which seems to be better for groups with ``large'' rank relative to the order of the group. An explicit comparison with the bound given in \eqref{elb_eq_1}, for general $G$, is problematic, since in general one has little knowledge of $\mathsf{D}(G^-)$. However, restricting to $p$-groups, where the Davenport constant is known, or comparing with the weaker bound, involving $\mathsf{D}^{\ast}(G)$, recalled in \eqref{elb_eq_2}, this could be made precise. We omit a detailed analysis, but see the discussion given in and after Remark \ref{as_rem} for a formulation making transparent what quantities need to be compared.

We point out that our construction is in essence a construction of a lower bound for $\mathsf{D}_2(G)$, which is then extended in the obvious way. Yet, since it is convenient in Section \ref{as}, we formulate the result in this form.

\begin{theorem}
\label{elb_thm}
Let $r \in \mathbb{N}$ and $G=C_{n_1}\oplus \dots \oplus C_{n_r}$ with $1< n_1\mid \dots \mid n_r$.
Let $s\in \mathbb{N}\setminus \{1\}$  and $t \in [1,r]$ such that $s(s-1)/2 \le r - t+1$. Then $\mathsf{D}_k(G)\ge \mathsf{D}^{\ast}(G)+ s \lfloor n_t / 2 \rfloor + \delta + (k-2)n_r$ where $\delta=0$ if $n_t$ is even and $\delta=1$ if $n_t$ is odd.
\end{theorem}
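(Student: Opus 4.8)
The plan is to use the characterization $\mathsf{D}_k(G)=\max\{|B|\colon B\in\mathcal M_k(G)\}$ recorded in Section~\ref{prel}: it suffices to exhibit one zero-sum sequence $B$ over $G$ of length exactly $\mathsf D^\ast(G)+s\lfloor n_t/2\rfloor+\delta+(k-2)n_r$ with $\max\mathsf L(B)\le k$, since then $B\in\mathcal M_k(G)$ forces $\mathsf D_k(G)\ge|B|$. Fix a basis $e_1,\dots,e_r$ with $\ord(e_i)=n_i$. I would build $B$ out of three blocks: a \emph{base} block realizing $\mathsf D^\ast(G)$, an \emph{interaction} block of length $s\lfloor n_t/2\rfloor+\delta$ responsible for the gain over $\mathsf D^\ast(G)$, and an \emph{extension} block built from $k-2$ copies of the minimal zero-sum sequence $e_r^{\,n_r}$, of total length $(k-2)n_r$.

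For the base block I would take the standard length-$\mathsf D^\ast(G)$ minimal zero-sum sequence $U_0=\big(\prod_{i=1}^r e_i^{\,n_i-1}\big)\cdot h$ with $h=\sum_{i=1}^r e_i$. The interaction block is where the hypothesis $s(s-1)/2\le r-t+1$ enters: since the coordinates $t,\dots,r$ all have order at least $n_t$ and there are $r-t+1$ of them, I can injectively assign to each of the $\binom{s}{2}$ pairs $\{a,b\}\subseteq[1,s]$ a distinct such coordinate $c(a,b)$. I would then define $s$ elements $f_1,\dots,f_s$ supported on these coordinates, placing opposite signs on $e_{c(a,b)}$ in $f_a$ and in $f_b$, so that each coordinate $c(a,b)$ is touched by exactly the two elements $f_a,f_b$ and so that $\sum_{a=1}^s f_a=0$. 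Taking each $f_a$ with multiplicity $\lfloor n_t/2\rfloor$ then yields a block $W=\prod_{a} f_a^{\lfloor n_t/2\rfloor}$ with $\sigma(W)=0$; when $n_t$ is odd a single additional element (accounting for $\delta=1$) is appended to restore both the zero-sum property and the exact length. That $\sigma(B)=0$ and that $|B|$ has the asserted value are then a direct coordinatewise computation, using that the full exponents $n_i-1$ in $U_0$ survive (no compensation is needed, precisely because the signed $f_a$ contribute zero in each coordinate).

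The heart of the argument, and the step I expect to be the main obstacle, is the upper bound $\max\mathsf L(B)\le k$, i.e.\ that $B$ admits no refinement into $k+1$ nonempty zero-sum subsequences. The design of the interaction block is tailored for exactly this: because each coordinate $c(a,b)$ is shared by $f_a$ and $f_b$ alone, the condition that a subcollection of the $f_a$'s contribute zero in those coordinates couples the $f_a$ across the complete graph $K_s$ on $[1,s]$, whose connectedness forces them to remain ``synchronized''. Thus the interaction block behaves, for factorization purposes, like a single indivisible zero-sum block, contributing at most one atom beyond the base. I would make this precise by analyzing, for an arbitrary factorization $B=V_1\cdots V_m$, the coordinatewise conditions (modulo the relevant $n_{c(a,b)}$) the $V_j$ must satisfy, and concluding that at most one $V_j$ can absorb the $f_a$'s together with $h$, that the base $U_0$ contributes at most one further atom, and that the $k-2$ copies of $e_r^{\,n_r}$ contribute at most $k-2$ more.

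The delicate points to watch are twofold. First, the interaction of the extension block with the base: $e_r$ already occurs in $U_0$, and when $\binom{s}{2}=r-t+1$ the top coordinate $r$ is forced into the assignment, so that $e_r$ is shared by $U_0$, the extension atoms, and some $f_a$; this can be managed by ordering the assignment to avoid $c(a,b)=r$ whenever $\binom{s}{2}<r-t+1$, and treated separately in the extremal case. Second, the parity bookkeeping for odd $n_t$, where $2\lfloor n_t/2\rfloor=n_t-1$ leaves a residue in each used coordinate that the single $\delta$-element must absorb without creating a new zero-sum piece. Once $\max\mathsf L(B)\le k$ is established the theorem is immediate, and the passage from $k=2$ to general $k$ is the ``obvious'' extension anticipated in the text: each appended copy of $e_r^{\,n_r}$ raises the maximal factorization length by exactly one.
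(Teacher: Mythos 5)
Your overall architecture --- a length-$\mathsf{D}^{\ast}(G)$ base block, an interaction block indexed by the $\binom{s}{2}$ pairs of $[1,s]$ assigned injectively to coordinates, and $k-2$ copies of $e_r^{\,n_r}$ --- is exactly the paper's, and the reformulation via $\mathcal{M}_k(G)$ is legitimate (the paper instead exhibits a not-necessarily-zero-sum sequence of length one less admitting no $k$ disjoint non-empty zero-sum subsequences; either works). But the sign convention in your interaction block is fatal. You arrange $\sum_{a=1}^{s} f_a=0$ by giving $e_{c(a,b)}$ opposite signs in $f_a$ and $f_b$. Then $f_1f_2\cdots f_s$ is itself a non-empty zero-sum subsequence, so $W=(f_1\cdots f_s)^{\lfloor n_t/2\rfloor}$ decomposes into $\lfloor n_t/2\rfloor$ disjoint non-empty zero-sum sequences, whence $\max\mathsf{L}(B)\ge 1+(k-2)+\lfloor n_t/2\rfloor>k$ as soon as $n_t\ge 4$; your $B$ is not in $\mathcal{M}_k(G)$ and the lower bound does not follow. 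The advertised ``coupling across $K_s$'' also fails locally: in coordinate $c(a,b)$ the subsequence $f_af_b$ already projects to the zero-sum pair $e_{c(a,b)}(-e_{c(a,b)})$, so nothing synchronizes the multiplicities.

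The paper makes the opposite sign choice, and moreover replaces $e_{c(a,b)}$ by an element of order exactly $n_t$: after reducing to $t=1$ it sets $g_j=\sum_{P\ni j}(n_{f(P)}/n_1)\,e_{f(P)}$, so that in coordinate $i_0=f(\{a,b\})$ both $g_a$ and $g_b$ contribute the \emph{same} element $e_{i_0}'$ of order $n_1$. A zero-sum combination in that coordinate must use copies of $e_{i_0}'$ and of $e_{i_0}$ whose weighted count is a positive multiple of $n_{i_0}$, while the total weight available in the projection is $n_1\cdot(n_{i_0}/n_1)+(n_{i_0}-1)=2n_{i_0}-1<2n_{i_0}$; hence at most one disjoint non-empty zero-sum piece can meet that coordinate nontrivially. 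This coordinatewise obstruction, which your version lacks, is what forces any two atoms meeting the interaction block to collide in the single coordinate mapped to $r$ and yields the contradiction against $(e_r')^{n_1}e_r^{(k-1)n_r-1}$. If you want to keep your outline, replace ``opposite signs'' by ``equal signs on order-$n_t$ elements'' and rerun the projection argument; as written, the construction produces a sequence with far too many disjoint zero-sum subsequences.
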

\begin{proof}
By the lower bounds recalled above, we may assume that $t=1$ and $s(s-1)/2 = r$.
Let $\{e_1, \dots, e_r\}$ be a basis of $G$ where $\ord(e_i)=n_i$. Let $e_i'= (n_i/n_1)e_i$.
Let $\mathcal{P}$ denote the set of all subsets with two elements of $[1,s]$ and let $f:\mathcal{P}\to [1,r]$ be some injective map.
For $j \in [1,s]$, let $g_{j}= \sum_{P \in \mathcal{P}, \, j \in P}e_{f(P)}'$.
Let $T= (\prod_{i=1}^sg_j)^{\lfloor n_1  /2 \rfloor} g_s^{\delta}$  with $\delta$ as above and
\[S=T(\prod_{i=1}^re_i^{n_i-1}) e_r^{(k-2)n_r}.\]

We assert that $S$ is not divisible by the product of $k$ non-empty zero-sum sequences, which establishes the result.
Seeking a contradiction, we assume that there exist minimal zero-sum 
sequences $A_1, \dots, A_k$ over $G$ such that $A_1\dots A_k\mid S$.

For $i \in [1,r]$, let $\pi_i: G\to \langle e_i\rangle$ denote the projection with respect to the basis $\{e_1, \dots, e_r\}$.
We note that $\pi_i(T)\mid (e_i')^{n_1} 0^{|T|- n_1 + 1}$ for each $i \in [1,r]$; and $\pi_i(S)\mid (e_i')^{n_1} e_i^{n_i - 1} 0^{|S| - n_1 - n_i + 2}$  for each $i \in [1,r-1]$ and \[\pi_r(S)\mid (e_r')^{n_1}e_r^{(k-1)n_r-1} 0^{|S|-n_1 - (k-1)n_r + 2}.\]

We observe that for each $j \in [1,k]$ we have  $A_j=e_r^n$ or $\gcd(A_j,T)\neq 1$.
We note that $A_j \neq e_r^n$ for at least two $j \in [1,k]$, say $\gcd(A_j,T)\neq 1$ for $j \in [1,2]$.
Let $g_{k_1} \mid A_1$ and let $g_{k_2}\mid A_2$ (possibly $k_1=k_2$).
Further, let $P \in \mathcal{P}$ with $\{k_1,k_2\}\subset P$ and $i_0=f(P)$. Clearly $\pi_{i_0}(g_{k_1})= \pi_{i_0}(g_{k_2})=e_{i_0}'$.
For $i\in [1,2]$, since $\sigma(\pi_{i_0}(A_j))=0$ and $e_{i_0}'\mid \pi_{i_0}(A_j)$ it follows that
$\pi_{i_0}(A_j)$ is divisible by a non-empty zero-sum sequence over $\langle e_{i_0}\rangle \setminus \{0\}$.
Since for $i_0 \neq r$ we have $\pi_{i_0}(S) \mid (e_{i_0}')^{n_1} e_{i_0}^{n_{i_0} - 1} 0^{|S| - n_{1} - n_{i_0} + 2}$
and $(e_{i_0}')^{n_1} e_{i_0}^{n_{i_0} - 1}$ is not divisible by the product of two non-empty zero-sum sequences, it follows that
$i_0=r$.
Thus, we get that $ \pi_r(A_j)$ is divisible by zero-sum sequence over $\langle e_{r}\rangle \setminus \{0\}$ for each $j\in [1,k]$.
However,  $(e_r')^{n_1}e_r^{(k-1)n_r-1}$  is not divisible by the product of $k$ non-empty zero-sum sequences, a contradiction.
\end{proof}

\section{An asymptotic result}
\label{as}

In this section we combine the results established and recalled so far to show
that for each $G$ the sequence $(\mathsf{D}_k(G))_{k \in \mathbb{N}}$ is eventually an arithmetic progression with difference $\exp(G)$.  As mentioned in Section \ref{int}, in \cite{delormeetal01} it was already proved that the sequence $(\mathsf{D}_k(G) - k \exp(G))_{k\in \mathbb{N}}$ is bounded.
Facilitated by this result, we introduce two new invariants that seem useful when investigating $\mathsf{D}_k(G)$.

\begin{lemma}
\label{as_lem}
Let $G$ be a finite abelian group. There exist  $D_G\in \mathbb{N}_0$ and $k_G \in \mathbb{N}$ such that
$\mathsf{D}_k(G)= D_G + k \exp(G)$ for each $k \ge k_G$.
\end{lemma}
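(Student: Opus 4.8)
The plan is to analyze the shifted sequence $a_k = \mathsf{D}_k(G) - k\exp(G)$, which takes values in $\mathbb{Z}$, and to show that it is eventually constant; its eventual value will be the desired $D_G$, and the difference $\exp(G)$ is already built into the shift. Two facts that are in hand make this possible. On the one hand, $(a_k)_{k}$ is bounded: this is exactly the result of \cite{delormeetal01} quoted at the start of this section, and moreover the lower bound \eqref{elb_eq_2} gives the explicit estimate $a_k \ge \mathsf{D}^{\ast}(G) - \exp(G) \ge 0$. On the other hand, the ``in particular'' part of Proposition \ref{ub_prop_bas}.3 controls consecutive differences from above once $\mathsf{D}_k(G)$ is large enough.

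First I would record that $\mathsf{D}_k(G) \to \infty$. This is immediate from the inequality $\mathsf{D}_{k+1}(G) \ge \mathsf{D}_k(G) + 1$ noted after the definition of $\mathsf{D}_k(G)$ (via appending a zero), which yields $\mathsf{D}_k(G) \ge \mathsf{D}(G) + (k-1)$. Consequently there exists $k_0 \in \mathbb{N}$ with $\mathsf{D}_k(G) \ge \eta(G) - 1 - \exp(G)$ for all $k \ge k_0$, so the hypothesis of the ``in particular'' statement of Proposition \ref{ub_prop_bas}.3 is satisfied from $k_0$ onward. That statement then gives $\mathsf{D}_{k+1}(G) \le \mathsf{D}_k(G) + \exp(G)$, that is $a_{k+1} \le a_k$, for every $k \ge k_0$. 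Hence $(a_k)_{k \ge k_0}$ is non-increasing.

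The conclusion is now an elementary observation: a non-increasing sequence of integers that is bounded below must stabilize. Applying this to $(a_k)_{k \ge k_0}$ produces $k_G \ge k_0$ and an integer $D_G$ with $a_k = D_G$ for all $k \ge k_G$, i.e. $\mathsf{D}_k(G) = D_G + k\exp(G)$ for all $k \ge k_G$; and $D_G \ge 0$ because $D_G = a_{k_G} \ge \mathsf{D}^{\ast}(G) - \exp(G) \ge 0$, so $D_G \in \mathbb{N}_0$ as required.

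The point I would flag as the crux is methodological rather than computational: one should \emph{not} attempt to establish the exact per-step increment $\mathsf{D}_{k+1}(G) = \mathsf{D}_k(G) + \exp(G)$ directly, since this would require a matching explicit lower bound on the difference that is not evidently valid for every $k$. The entire force of the argument comes from combining an upper bound on the differences (which gives eventual monotonicity of $a_k$) with the boundedness of $a_k$, so that integrality forces the sequence to become constant. Everything else is bookkeeping with results already proved in the earlier sections.
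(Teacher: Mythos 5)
Your proposal is correct and follows essentially the same route as the paper: both arguments show that the shifted sequence $\mathsf{D}_k(G)-k\exp(G)$ is eventually non-increasing (via the bound $\mathsf{D}_{k+1}(G)\le\max\{\mathsf{D}_k(G)+\exp(G),\,\eta(G)-1\}$ once $\mathsf{D}_k(G)$ is large enough) and bounded below by $0$ via \eqref{elb_eq_2}, hence eventually constant by integrality. The only cosmetic difference is that you derive the eventual largeness of $\mathsf{D}_k(G)$ from $\mathsf{D}_{k+1}(G)\ge\mathsf{D}_k(G)+1$, whereas the paper uses $\mathsf{D}_k(G)\ge k\exp(G)$ directly to get the explicit threshold $k_0=\eta(G)/\exp(G)-1$.
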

\begin{proof}
We start by establishing the following assertion:
If $k \ge \eta(G)/\exp(G) - 1=k_0$, then $\mathsf{D}_{k+1}(G)\le \mathsf{D}_{k}(G)+\exp(G)$.

By \eqref{elb_eq_2}, $\mathsf{D}_{k}(G)\ge k \exp(G) \ge \eta(G) -\exp(G)$.
Thus $\mathsf{D}_{k}(G) + \exp(G)\ge \eta(G) = \mathsf{s}_{\le \exp(G)}(G)$.
Thus by Remark \ref{ub_rem}.3, with $\ell=\exp(G)$, we have
\[\mathsf{D}_{k+1}(G) \le \max \{\mathsf{D}_k(G) + \exp(G), \mathsf{s}_{\le \exp(G)}(G)-1\}= \mathsf{D}_k(G) + \exp(G),\] as claimed.

Thus, for $k \ge k_0$, we have $\mathsf{D}_{k+1}(G)-(k+1)\exp(G)\le \mathsf{D}_k(G)-k \exp(G)$, i.e.,
the sequence $(\mathsf{D}_k(G)- k\exp(G))_{k \in \mathbb{N}}$ is eventually non-increasing.
Since by \eqref{elb_eq_2} $\mathsf{D}_k(G)- k\exp(G)\ge 0$ for each $k$, the above sequence is additionally bounded below and consequently eventually constant.
The claim follows.
\end{proof}

In view of this result the following definition makes sense.
\begin{definition}
Let $G$ be a finite abelian group.
\begin{enumerate}
  \item Let $\mathsf{D}_0(G) \in \mathbb{N}_0$ such that $\mathsf{D}_{k}(G)=\mathsf{D}_0(G)+k \exp(G)$ for all sufficiently large $k \in \mathbb{N}$.
  \item Let $k_{\mathsf{D}}(G)\in \mathbb{N}$ be minimal with $\mathsf{D}_{k}(G)=\mathsf{D}_0(G)+k \exp(G)$ for $k \ge k_{\mathsf{D}}(G)$.
\end{enumerate}
\end{definition}
The proof of Lemma \ref{as_lem} being non-constructive, we have, for general $G$, no upper bound for $k_{\mathsf{D}}(G)$. We derive one in Section \ref{rlb}.
However, known bounds on $\mathsf{D}_k(G)$ readily yield bounds for $\mathsf{D}_0(G)$ and known results on $\mathsf{D}_k(G)$ can be recast using this terminology. For illustration and to summarize most of the explicit results on $\mathsf{D}_k(G)$, we do this in the remark below. These results can be found, the first two, in \cite{halterkoch92}, \cite{delormeetal01}, \cite[Theorem 6.1.5]{geroldingerhalterkochBOOK}, the third one in \cite{delormeetal01}, and the last one in \cite{bhowmikschalge07}.

\begin{remark}
\label{as_rem}
Let $G$ be a finite abelian group with exponent $n$.
\begin{enumerate}
\item $\mathsf{D}(G^-)-1\le \mathsf{D}_0(G) \le \max \{\mathsf{D}(G)-n, \eta(G)-2n\}$.
\item If $\mathsf{r}(G)\le 2$, or more generally if $\eta(G)\le \mathsf{D}(G)+n$ and $\mathsf{D}(G)=\mathsf{D}(G^-)+n-1$, then $\mathsf{D}_0(G)=\mathsf{D}(G^-)-1$ and $k_{\mathsf{D}}(G)=1$.
\item $\mathsf{D}_0(C_2^3)= 3$ and $k_{\mathsf{D}}(C_2^3)=2$. Additionally, $\mathsf{D}_1(C_2^3)=4$.
\item $\mathsf{D}_0(C_3^3)= 6 $ and $k_{\mathsf{D}}(C_2^3)=3$. Additionally, $\mathsf{D}_1(C_3^3)=7$ and $\mathsf{D}_2(C_3^3)=11$.
\end{enumerate}
\end{remark}
Finally, we point out that Theorem \ref{elb_thm} shows that $\mathsf{D}_0(G) \ge \mathsf{D}^{\ast}(G^-) - 1 + (s \lfloor n_t / 2 \rfloor + \delta - n_r)$ with $s$ and $n_i$ as defined there. Additionally, we see by this result that $k_{\mathsf{D}}(G) > 1$ for $p$-groups with ``large'' rank, and it seems that this is the case for numerous other types of groups; again a precise statement in this regard is impeded by the lack knowledge about the Davenport constant.

\section{Recursive lower bounds for $\mathsf{D}_k(G)$ and an upper bound for $k_{\mathsf{D}}(G)$}
\label{rlb}

In this section we establish recursive lower bounds for $\mathsf{D}_k(G)$.
These are used to derive an upper bound for $k_{\mathsf{D}}(G)$. At first, we derive a bound that involves the successive distance $\delta(G)$ and thus is not (yet) explicit. However, in Theorem \ref{rlb_thm_sd} we establish an explicit upper bound for $\delta(G)$ to resolve this issue.

On the one hand, as mentioned in Section \ref{prel} we have $\mathsf{D}_{k+1}(G)\ge \mathsf{D}_{k}(G) +1$.
On the other hand, in view of Remark \ref{ub_rem}.3, it is clear that without restriction on $k$, the best bound of the form $\mathsf{D}_{k+1}(G) \ge \mathsf{D}_{k}(G) + c$ that possibly can hold, for each $k$, would be $\mathsf{D}_{k+1}(G) \ge \mathsf{D}_{k}(G) + \exp(G)$.

First, we slightly improve the former bound, except for $|G|=1$ where it is obviously optimal.
In view of the discussion above, the following result is optimal, for large $k$, for elementary $2$-groups.

\begin{lemma}
\label{rlb_lem_lb2}
Let $G\neq \{0\}$ be a finite abelian group.
Let $B$ be a zero-sum sequence over $G$ and let $g \in G\setminus \{0\}$.
Then $\max \mathsf{L}(B(-g)g)= 1+\max \mathsf{L}(B)$.
In particular, $\mathsf{D}_{k+1}(G)\ge \mathsf{D}_{k}(G)+ 2$ for each $k \in \mathbb{N}$. Moreover, if $C$ is a zero-sum sequence 
over $G$ with $\max \mathsf{L}(C)\le k$ and $|C|= \mathsf{D}_k(G)$, then $0 \notin \supp(C)$.
\end{lemma}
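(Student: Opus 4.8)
The plan is to reduce everything to the first equality $\max\mathsf{L}(B(-g)g) = 1+\max\mathsf{L}(B)$, from which the two remaining assertions follow quickly. Write $U=(-g)g$, a minimal zero-sum sequence since $g\neq 0$ (when $\ord(g)=2$ it is $g^2$, still an atom). The inequality $\max\mathsf{L}(BU)\ge 1+\max\mathsf{L}(B)$ is immediate: append the atom $U$ to a factorization of $B$ of maximal length. The content lies in the reverse inequality $\max\mathsf{L}(BU)\le 1+\max\mathsf{L}(B)$, which I would prove by taking a factorization $\zeta=A_1\cdots A_\ell$ of $BU$ with $\ell=\max\mathsf{L}(BU)$ and extracting from it a factorization of $B$ of length $\ell-1$.

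To do this I would distinguish two cases according to whether $U$ divides one of the atoms $A_i$. If $U\mid A_i$ for some $i$, then since $U$ is a non-empty zero-sum sequence dividing the minimal zero-sum sequence $A_i$, minimality forces $A_i=U$; deleting this factor yields a factorization of $U^{-1}(BU)=B$ of length $\ell-1$. Otherwise no atom contains all of $U$, and since $BU$ contains both $g$ and $-g$ (two copies of $g$ when $\ord(g)=2$), there must be two \emph{distinct} atoms, say $A_1$ and $A_2$, with $g\mid A_1$ and $-g\mid A_2$.

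The key manipulation is then to set $A_1'=g^{-1}A_1$, $A_2'=(-g)^{-1}A_2$ and $W=A_1'A_2'$; one checks that $\sigma(W)=0$ and $A_1A_2=UW$, with $W$ a non-empty zero-sum sequence. Factoring $W=C_1\cdots C_m$ with $m=\max\mathsf{L}(W)\ge 1$ produces a factorization $U\cdot C_1\cdots C_m\cdot A_3\cdots A_\ell$ of $BU$ that contains $U$; removing $U$ leaves a factorization of $B$ of length $(\ell-2)+m\ge \ell-1$. Thus $\max\mathsf{L}(B)\ge \ell-1$ in both cases, giving the bound. I expect this refactoring step---merging the two spread-out atoms and re-extracting a copy of $U$---to be the main obstacle, chiefly in keeping the bookkeeping clean and making the $\ord(g)=2$ case run in parallel with the generic case.

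The two remaining assertions are then short consequences. For $\mathsf{D}_{k+1}(G)\ge\mathsf{D}_k(G)+2$, choose $B\in\mathcal{M}_k(G)$ with $|B|=\mathsf{D}_k(G)$ and any $g\in G\setminus\{0\}$ (available since $G\neq\{0\}$); the first part gives $\max\mathsf{L}(B(-g)g)=\max\mathsf{L}(B)+1\le k+1$, so $B(-g)g\in\mathcal{M}_{k+1}(G)$ and hence $\mathsf{D}_{k+1}(G)\ge|B(-g)g|=\mathsf{D}_k(G)+2$. For the final claim, I would argue by contradiction: if $0\in\supp(C)$, write $C=C'0$ and use $\max\mathsf{L}(C'0)=\max\mathsf{L}(C')+1$ (recalled in Section~\ref{prel}) to get $\max\mathsf{L}(C')\le k-1$; then $\max\mathsf{L}(C'(-g)g)=\max\mathsf{L}(C')+1\le k$, so $C'(-g)g\in\mathcal{M}_k(G)$, yet $|C'(-g)g|=\mathsf{D}_k(G)+1>\mathsf{D}_k(G)$, contradicting the definition of $\mathsf{D}_k(G)$.
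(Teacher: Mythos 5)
Your proposal is correct and follows essentially the same route as the paper: split on whether the atom $(-g)g$ already occurs in a maximal-length factorization of $B(-g)g$, and otherwise merge the two factors containing $g$ and $-g$, peel off $(-g)g$, and refactor the non-empty zero-sum remainder $W$ (the paper's $C$) to recover a factorization of $B$ of length at least $\max\mathsf{L}(B(-g)g)-1$; the deductions of the two consequences also match the paper. The only (harmless) difference is that the paper uses maximality of the factorization to note that $W$ is itself an atom, whereas you only use $\max\mathsf{L}(W)\ge 1$, which suffices.
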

\begin{proof}
Let $\zeta$ be a factorization of $B(-g)g$ with maximal length, that is $|\zeta|=\max  \mathsf{L}(B(-g)g)$.
If the minimal zero-sum sequence $(-g)g$ appears in 
this fatorization, i.e., $\mathsf{v}_{(-g)g}(\zeta)> 0$, then $((-g)g)^{-1}\zeta$ is a factorization of $B$ and the claim follows.
Thus suppose there exist minimal zero-sum sequences $U_1$ and $U_2$ such that $\mathsf{v}_{g}(U_1)>0$, $\mathsf{v}_{-g}(U_2)>0$, and $U_1U_2 \mid \zeta$.
Since $(-g)g \mid U_1U_2$ (as sequences, not factorizations), there exists some zero-sum sequence $C$ such that
$U_1U_2=((-g)g)C$. We assert that $C$ is a minimal zero-sum sequence.
Let $\zeta_C$ be a factorization of $C$. It follows that $(U_1U_2)^{-1}((-g)g)\zeta_C \zeta$ is a factorization of $B(-g)g$.
Thus, $|\zeta|-2+|\zeta_C|+1\in \mathsf{L}(B(-g)g)$. Since $|\zeta|=\max  \mathsf{L}(B(-g)g)$, it follows that $|\zeta_C|=1$.
The claim $\mathsf{D}_{k+1}(G)\ge \mathsf{D}_{k}(G)+ 2$ follows immediately.

To prove the final claim, we consider some 
zero-sum sequence $C$ with $\max \mathsf{L}(C)\le k$ and  $|C|= \mathsf{D}_k(G)$, and assume to the contrary that $0\mid C$.
It follows that $\max\mathsf{L}(0^{-1}C)\le k-1$ and thus, for $g \in G \setminus \{0\}$, we have $\max \mathsf{L}(0^{-1}C(-g)g)$.
Yet, $|0^{-1}C(-g)g|> |C|$, a contradiction.
\end{proof}

Now, we show that indeed for each finite abelian group $G$, $\mathsf{D}_{k+1}(G) = \mathsf{D}_{k}(G) + \exp(G)$ for all sufficiently large $k$.
We point out that it is well possible that the inequality $\mathsf{D}_{k+1}(G) \ge \mathsf{D}_{k}(G) + \exp(G)$ in fact holds for any $k$, yet our proof requires that $k$ is large.
Then, we make the condition ``sufficiently large'' explicit and thus establish an explicit upper bound for $k_{\mathsf{D}}(G)$; however, little effort is made to optimize this bound, since it seems very unlikely that we obtain a value close to the actual value using the present approach.

\begin{proposition}
\label{rlb_prop_exp}
Let $G$ be a finite abelian group and let \[k \ge  \delta(G)\exp(G)|\mathcal{A}(G)|+ (\eta(G)- \mathsf{D}(G^-)).\]
Then $\mathsf{D}_{k+1}(G)=\mathsf{D}_{k}(G)+\exp(G)$.
\end{proposition}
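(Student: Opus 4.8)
The plan is to split into the two inequalities, of which only one is real work. The hypothesis on $k$ is comfortably strong enough (for $|G|\ge 3$ the term $\delta(G)\exp(G)|\mathcal{A}(G)|$ already dwarfs $\eta(G)/\exp(G)-1$, and $|G|\le 2$ is trivial) that $\mathsf{D}_{k+1}(G)\le \mathsf{D}_k(G)+\exp(G)$ is already available from (the proof of) Lemma \ref{as_lem} via Remark \ref{ub_rem}.3. So everything reduces to exhibiting a zero-sum sequence in $\mathcal{M}_{k+1}(G)$ of length $\mathsf{D}_k(G)+\exp(G)$, i.e.\ to the reverse inequality.

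For the lower bound I would fix an extremal $C$ with $\max\mathsf{L}(C)=k$ and $|C|=\mathsf{D}_k(G)$, an optimal factorization $\zeta\in\mathsf{Z}(C)$ with $|\zeta|=k$, and a minimal zero-sum sequence $U$ occurring in $\zeta$ with $\mathsf{v}_U(\zeta)\ge \delta(G)$. The heart is the claim $\max\mathsf{L}(CU)=k+1$: the inequality ``$\ge$'' is clear from $\zeta U\in\mathsf{Z}(CU)$, and for ``$\le$'' suppose $\max\mathsf{L}(CU)\ge k+2$. Then $k+1$ and the next larger length $k'$ of $CU$ are adjacent lengths, so applying the definition of $\delta(G)$ to $\eta:=\zeta U$ produces $\xi\in\mathsf{Z}(CU)$ with $|\xi|=k'\ge k+2$ and $\mathsf{d}(\eta,\xi)\le\delta(G)$. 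Since $\mathsf{v}_U(\eta)=\mathsf{v}_U(\zeta)+1>\delta(G)\ge |\gcd(\eta,\xi)^{-1}\eta|$, the atom $U$ survives into $\xi$; removing one copy of $U$ gives a factorization of $C$ of length $k'-1\ge k+1$, contradicting $\max\mathsf{L}(C)=k$. Thus $CU\in\mathcal{M}_{k+1}(G)$ and $\mathsf{D}_{k+1}(G)\ge \mathsf{D}_k(G)+|U|$; combined with the upper bound this also forces $|U|\le\exp(G)$ for every such high-multiplicity $U$.

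What remains is to produce a high-multiplicity atom of length \emph{exactly} $\exp(G)$, since that is what upgrades $\mathsf{D}_{k+1}(G)\ge \mathsf{D}_k(G)+|U|$ to the desired $+\exp(G)$. Here I would argue by counting lengths in $\zeta$: one has $\sum_A \mathsf{v}_A(\zeta)|A|=|C|=\mathsf{D}_k(G)\ge k\exp(G)$, every atom of multiplicity $\ge\delta(G)$ has length $\le\exp(G)$ by the previous step, and the atoms of multiplicity $<\delta(G)$ contribute — over at most $|\mathcal{A}(G)|$ types — only a bounded amount, controlled by $\mathsf{D}_k(G)-k\exp(G)\le \eta(G)-1$ (and $\ge\mathsf{D}(G^-)-1$). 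Were no length-$\exp(G)$ atom to have multiplicity $\ge\delta(G)$, all high-multiplicity atoms would have length $\le\exp(G)-1$, forcing $|C|<k\exp(G)$ up to this bounded correction — impossible once $k\ge \delta(G)\exp(G)|\mathcal{A}(G)|+(\eta(G)-\mathsf{D}(G^-))$. Appending such a length-$\exp(G)$ atom then yields $\mathsf{D}_{k+1}(G)\ge \mathsf{D}_k(G)+\exp(G)$.

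The hard part is this last step: establishing that, once $k$ passes the stated threshold, an extremal sequence must carry a minimal zero-sum sequence of length exactly $\exp(G)$ with multiplicity at least $\delta(G)$ in some optimal factorization. This is precisely where the shape of the bound is forced — the factor $|\mathcal{A}(G)|$ is the pigeonhole over atom types, the factor $\delta(G)\exp(G)$ reflects the need for multiplicity beyond $\delta(G)$ among atoms of length up to $\exp(G)$, and the additive $\eta(G)-\mathsf{D}(G^-)$ absorbs the discrepancy $\mathsf{D}_k(G)-k\exp(G)$. I would also be careful to verify the two supporting points used above, namely that $U\mid C$ (so the peeled factorization genuinely lies in $\mathsf{Z}(C)$) and that $k+1<k'$ really is a pair of adjacent lengths of $CU$, so that the single invocation of the successive distance is legitimate.
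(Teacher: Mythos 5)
Your overall architecture matches the paper's: the upper bound $\mathsf{D}_{k+1}(G)\le \mathsf{D}_k(G)+\exp(G)$ via Proposition \ref{ub_prop_bas}.3 under the hypothesis on $k$, and the ``heart'' claim $\max\mathsf{L}(CU)=k+1$ for an atom $U$ with $\mathsf{v}_U(\zeta)\ge\delta(G)$ is exactly the paper's Lemma \ref{rlb_lem_exp1}, and your proof of it (adjacent lengths, the successive distance, $U$ surviving cancellation, peeling off one copy) is correct. Your observation that the already-available upper bound forces every atom of multiplicity $\ge\delta(G)$ in an optimal factorization of an extremal sequence to have length $\le\exp(G)$ is a nice shortcut not used in the paper (which instead forces $|U|=\exp(G)$ by a cross-number rebracketing of $U^{\exp(G)}$ into atoms $h^{\ord(h)}$ and appealing to maximality of the factorization length).

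However, there is a genuine gap in the last step, and it sits exactly where you say the hard part is. Your counting argument bounds the low-multiplicity atoms by ``at most $|\mathcal{A}(G)|$ types, each of multiplicity $<\delta(G)$,'' and claims their contribution is ``controlled by $\mathsf{D}_k(G)-k\exp(G)$.'' That last claim is not justified: $\mathsf{D}_k(G)-k\exp(G)$ is the \emph{net} excess $\sum_A\mathsf{v}_A(\zeta)(|A|-\exp(G))$, and bounding the \emph{positive} part of this sum (coming from low-multiplicity atoms of length up to $\mathsf{D}(G)$) by the net is circular, since the negative part is precisely the deficit you are trying to exploit. With only the inputs you list, the contradiction one extracts is $k<\delta(G)|\mathcal{A}(G)|\bigl(\mathsf{D}(G)-\exp(G)+1\bigr)$, which exceeds the stated threshold $\delta(G)\exp(G)|\mathcal{A}(G)|+\eta(G)-\mathsf{D}(G^-)$ whenever $\mathsf{D}(G)\ge 2\exp(G)$ (e.g.\ elementary $2$-groups of rank $\ge 3$), so the proposition is not proved for all $k$ above the threshold. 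The missing ingredient is the paper's key Assertion in Lemma \ref{rlb_lem_exp2}: writing $\zeta=\zeta^{<}\zeta^{\ge}$ according to whether atom lengths are below or at least $\exp(G)$, one observes that $|\zeta^{\ge}|=\max\mathsf{L}(\pi(\zeta^{\ge}))$ (by maximality of $\zeta$) and applies Remark \ref{ub_rem}.2, i.e.\ the $\eta(G)$-based lower bound $\max\mathsf{L}(B')\ge(|B'|-\eta(G)+1)/\exp(G)$, to the sub-product $\pi(\zeta^{\ge})$; combined with the lower bound $|B|\ge\mathsf{D}(G^-)-1+k\exp(G)$ this yields $|\zeta^{<}|\le\eta(G)-\mathsf{D}(G^-)$ (equivalently, the excess length of the long atoms is at most $\eta(G)-1$). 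Only then does pigeonholing over $|\mathcal{A}(G)|$ types among the $\ge\exp(G)$-length atoms produce an atom of multiplicity $\ge\delta(G)\exp(G)\ge\delta(G)$ and length $\ge\exp(G)$, which your multiplicity-versus-length observation (or the paper's rebracketing) then pins to length exactly $\exp(G)$. You should supply this $\eta(G)$-based bound on $|\zeta^{<}|$; without it the shape of the stated threshold cannot be recovered.
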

For clarity of exposition and since the technical results might be of independent interest, we first establish some auxiliary results.

\begin{lemma}
\label{rlb_lem_exp1}
Let $G$ be a finite abelian group and let $k \in \mathbb{N}$.
Let $B$ be a zero-sum sequence over $G$ with $\max\mathsf{L}(B)=k$.
If there exist some factorization $\zeta$ of $B$ with $|\zeta|=k$ 
and some minimal zero-sum sequence $U$ such that
$\mathsf{v}_{U}(\zeta)\ge \delta(G)$, i.e., $U$ occurs at least $\delta(G)$
 times in this factorization, then $\max \mathsf{L}(BU)=k+1$.
\end{lemma}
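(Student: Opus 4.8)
The inequality $\max\mathsf{L}(BU) \ge k+1$ is immediate: appending the atom $U$ to the given factorization $\zeta$ produces a factorization $\zeta' = \zeta \cdot U$ of $BU$ with $|\zeta'| = k+1$. The substance of the lemma is the reverse inequality $\max\mathsf{L}(BU) \le k+1$, and the plan is to prove it by contradiction, exploiting the high multiplicity of $U$ in $\zeta'$ together with the defining property of $\delta(G)$.

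Assume $\max\mathsf{L}(BU) \ge k+2$. First I record that $\mathsf{v}_U(\zeta') = \mathsf{v}_U(\zeta) + 1 \ge \delta(G) + 1$. Since $k+1 = |\zeta'| \in \mathsf{L}(BU)$ while $\max\mathsf{L}(BU) \ge k+2$, there is a smallest length $k'' \in \mathsf{L}(BU)$ with $k'' > k+1$; by minimality, $k+1$ and $k''$ are adjacent lengths of $BU = \pi(\zeta')$, and $k'' \ge k+2$.

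Now I apply the definition of the successive distance to $\zeta'$. As $\delta(\zeta') \le \delta(G)$ and $k''$ is adjacent to $|\zeta'|$, there exists a factorization $\xi \in \mathsf{Z}(BU)$ with $|\xi| = k''$ and $\mathsf{d}(\xi, \zeta') \le \delta(G)$. The point is that $U$ must survive in $\xi$: from $\mathsf{d}(\xi,\zeta') \ge |\gcd(\xi,\zeta')^{-1}\zeta'|$ and the explicit form of the distance, inspecting only the component at $U$ gives $\mathsf{v}_U(\zeta') - \mathsf{v}_U(\xi) \le \delta(G)$, whence $\mathsf{v}_U(\xi) \ge \mathsf{v}_U(\zeta') - \delta(G) \ge 1$. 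Thus $U \mid \xi$, and removing one copy of $U$ yields a factorization $U^{-1}\xi$ of $U^{-1}(BU) = B$ of length $k'' - 1 \ge k+1$. This contradicts $\max\mathsf{L}(B) = k$, which completes the argument.

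The main obstacle, such as it is, lies in the bookkeeping of the middle step: one must pass from the metric bound $\mathsf{d}(\xi,\zeta') \le \delta(G)$ to the conclusion that at least one copy of $U$ lies in the common part $\gcd(\xi,\zeta')$. This is precisely where the hypothesis $\mathsf{v}_U(\zeta) \ge \delta(G)$ is used in an essential way, guaranteeing enough copies of $U$ in $\zeta'$ that cancelling at most $\delta(G)$ atoms cannot remove them all; everything else is a direct invocation of the notions of adjacent lengths and of $\delta(G)$ recalled in Section~\ref{prel}.
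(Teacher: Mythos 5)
Your argument is correct and follows essentially the same route as the paper's proof: append $U$ to $\zeta$, take the next adjacent length above $k+1$, invoke the definition of $\delta(G)$ to find a nearby longer factorization, and use $\mathsf{v}_U(\zeta U)\ge \delta(G)+1$ to conclude that a copy of $U$ survives, which upon removal contradicts $\max\mathsf{L}(B)=k$. The only difference is that you spell out the bookkeeping for $\mathsf{v}_U(\xi)\ge 1$ via the explicit distance formula, which the paper leaves implicit.
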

\begin{proof}
Clearly $\zeta U$ is a factorization of $BU$ and thus $k+1\in \mathsf{L}(BU)$.
It remains to show that $\max \mathsf{L}(BU)=k+1$.
Assume not. Let $k'= \min \{n \in \mathsf{L}(BU)\colon n > k+1\}$.
Since $k+1$ and $k'$ are successive distances of $\mathsf{L}(BU)$ and by the definition of $\delta(G)$
there exists some factorization $\zeta'$ of $BU$ with $|\zeta'|=k'$ such that $\mathsf{d}(\zeta U, \zeta')\le \delta(G)$.
Since $\mathsf{v}_{U}(\zeta U)\ge 1 + \delta(G)$, it follows that $\mathsf{v}_U(\zeta')\ge 1$.
Consequently $U^{-1}\zeta'$ is a factorization of $B$ and thus $k' - 1 = |U^{-1}\zeta'| \in \mathsf{L}(B)$.
Since $k'-1> k$, this contradicts $\max \mathsf{L}(B)= k$.
Thus, we have $\max \mathsf{L}(BU)=k+1$ and the claim is proved.
\end{proof}

\begin{lemma}
\label{rlb_lem_exp2}
Let $G$ be a finite abelian group and let $k \in \mathbb{N}$.
Let $B$ be a zero-sum sequence over $G$ such that $\max\mathsf{L}(B)=k$ and $|B|=\mathsf{D}_k(G)$.
There exists some $g \in G$ with $\ord(g)=\exp(G)$ and some
factorization $\zeta_g$ of $B$ with $|\zeta_g|=k$ such that
$\mathsf{v}_{g^{\ord(g)}}(\zeta_g)\ge \lfloor(k - (\eta(G) - \mathsf{D}(G^-)))/(\exp(G)|\mathcal{A}(G)|)\rfloor $.
\end{lemma}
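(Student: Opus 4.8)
The plan is to begin with a factorization $\zeta = A_1 \cdots A_k$ of $B$ of maximal length, so that $|\zeta| = \max\mathsf{L}(B) = k$ and $\sum_{i=1}^{k} |A_i| = |B| = \mathsf{D}_k(G)$, and to argue that, after a controlled rearrangement, a large proportion of the atoms $A_i$ are of the form $g^{\exp(G)}$ with $\ord(g) = \exp(G)$. Two length estimates frame everything. On the one hand, \eqref{elb_eq_2} gives $\mathsf{D}_k(G) \ge k\exp(G) + \mathsf{D}(G^-) - 1$. On the other hand, applying Remark~\ref{ub_rem}.2 to $B$ and using $\max\mathsf{L}(B) = k$ yields $k \ge (\mathsf{D}_k(G) - \eta(G) + 1)/\exp(G)$, that is $\mathsf{D}_k(G) \le k\exp(G) + \eta(G) - 1$. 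Hence the total excess $\sum_{i=1}^{k}(|A_i| - \exp(G)) = \mathsf{D}_k(G) - k\exp(G)$ lies in an interval of length exactly $\eta(G) - \mathsf{D}(G^-)$, which is the origin of the constant in the statement. By Lemma~\ref{rlb_lem_lb2} we also have $0 \notin \supp(B)$, so each $A_i$ has length at least $2$.

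The key local tool is a cross-number estimate coupled with a batching operation. Suppose a fixed atom $A$ occurs at least $\exp(G)$ times in $\zeta$. Writing $A^{\exp(G)} = \prod_{h \in \supp(A)} h^{\exp(G)\mathsf{v}_h(A)}$ and splitting each $h^{\exp(G)\mathsf{v}_h(A)}$ into copies of $h^{\ord(h)}$, one refactors $A^{\exp(G)}$ into $\exp(G)\mathsf{k}(A)$ monochromatic atoms. Substituting this for $\exp(G)$ of the copies of $A$ in $\zeta$ produces a factorization of $B$ of length $k + \exp(G)(\mathsf{k}(A) - 1)$; since no factorization of $B$ has length exceeding $k$, this forces $\mathsf{k}(A) \le 1$ and hence $|A| \le \exp(G)\mathsf{k}(A) \le \exp(G)$. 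Moreover, each monochromatic atom attached to a term $h \in \supp(A)$ with $\ord(h) = \exp(G)$ equals $h^{\exp(G)}$, and the substitution preserves maximal length precisely when $\mathsf{k}(A) = 1$.

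With these facts I would classify the atoms of $\zeta$, calling an atom \emph{good} if $\mathsf{k}(A) = 1$ and $A$ contains a term of order $\exp(G)$. A good atom of length $\exp(G)$ is already pure, while a good atom occurring at least $\exp(G)$ times can be batched, without altering the maximal length, into pure atoms $h^{\exp(G)}$. The goal is to show, via the excess accounting above together with the bound $|A_i| \le \exp(G)$ for the frequently occurring atoms, that all but at most $\eta(G) - \mathsf{D}(G^-)$ of the $k$ atoms may be taken to be good. Granting this, there is a maximal-length factorization containing at least $k - (\eta(G) - \mathsf{D}(G^-))$ good atoms; since there are at most $|\mathcal{A}(G)|$ possible atoms $h^{\exp(G)}$ and each such pure atom is obtained from at most $\exp(G)$ underlying copies during batching, a pigeonhole argument over these types produces a single $g$ with $\ord(g) = \exp(G)$ occurring at least $\lfloor (k - (\eta(G) - \mathsf{D}(G^-)))/(\exp(G)|\mathcal{A}(G)|) \rfloor$ times in the resulting factorization $\zeta_g$, as required.

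The main obstacle is exactly this last structural step. Maximal-length factorizations inherently favour short atoms such as $h(-h)$, which do contain terms of order $\exp(G)$ but have cross number $2/\exp(G) < 1$ when $\exp(G) > 2$, and therefore cannot be batched while keeping the length maximal. The crux is thus to control the atoms of length less than $\exp(G)$ and the monochromatic atoms of non-maximal order, absorbing their number into the bounded excess $\eta(G) - \mathsf{D}(G^-)$; this is where the interplay between the inequality $\mathsf{k}(A) \le 1$ and the lower estimate $\mathsf{D}(G^-) - 1$ is genuinely used, and it is also the point at which the crude factor $|\mathcal{A}(G)|$ enters and is left unoptimized. For $\exp(G) = 2$ no atom of length less than $\exp(G)$ can occur, and the argument collapses to a direct count of the length-$\exp(G)$ atoms, which is consistent with the elementary $2$-group computations of Section~\ref{e2g}.
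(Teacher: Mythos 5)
Your outline assembles the right local ingredients --- the batching of $\exp(G)$ copies of an atom $A$ into $\exp(G)\mathsf{k}(A)$ monochromatic atoms, the resulting constraints $\mathsf{k}(A)\le 1$ and $|A|\le\exp(G)$ forced by maximality of the factorization length, and the pigeonhole over $|\mathcal{A}(G)|$ atom types --- and these are exactly the tools the paper uses. But the step you yourself flag as ``the main obstacle'' is a genuine gap, and the excess accounting as you state it does not close it. Knowing only that the total excess $\sum_i(|A_i|-\exp(G))=\mathsf{D}_k(G)-k\exp(G)$ lies in an interval of length $\eta(G)-\mathsf{D}(G^-)$ does not bound the number of short atoms: a large negative contribution from many atoms of length $<\exp(G)$ could in principle be cancelled by a large positive contribution from many long atoms, since the naive per-atom bound on the positive excess is only $\mathsf{D}(G)-\exp(G)$ per long atom.

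The missing idea is to make the positive part of the excess \emph{globally} bounded by $\eta(G)-1$. Write $\zeta=\zeta^{<}\zeta^{\ge}$, separating the atoms of length less than $\exp(G)$ from the rest. Maximality of $|\zeta|$ forces $|\zeta^{\ge}|=\max\mathsf{L}(\pi(\zeta^{\ge}))$, so Remark~\ref{ub_rem}.2 applied to the zero-sum sequence $\pi(\zeta^{\ge})$ gives $|\pi(\zeta^{\ge})|\le\exp(G)|\zeta^{\ge}|+\eta(G)-1$; together with $|\pi(\zeta^{<})|\le(\exp(G)-1)|\zeta^{<}|$ and the lower bound $|B|\ge\mathsf{D}(G^-)-1+k\exp(G)$ from \eqref{elb_eq_2}, this yields $|\zeta^{<}|\le\eta(G)-\mathsf{D}(G^-)$ directly. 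Once this is in hand, your good/bad classification is unnecessary and in fact harder than what is needed: one should pigeonhole over $\zeta^{\ge}$ to find a single atom $U$ of length at least $\exp(G)$ occurring at least $\ell\exp(G)$ times, and perform the batching only on that $U$. The properties you wanted to impose on ``good'' atoms in advance ($\mathsf{k}(U)=1$, $|U|=\exp(G)$, every term of $U$ of order $\exp(G)$) then all follow a posteriori from the inequality $\exp(G)\mathsf{k}(U)\ge|U|\ge\exp(G)$ and the maximality of $|\zeta|$, and the troublesome atoms $h(-h)$ never enter because they lie in $\zeta^{<}$ and are already absorbed into the bound $\eta(G)-\mathsf{D}(G^-)$.
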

\begin{proof}
Let $\zeta$ be a factorization of $B$ with $|\zeta|=k$. Let $\zeta=\zeta^{<}\zeta^{=}\zeta^{>}$ where $\zeta^{<}$, $\zeta^{=}$, $\zeta^{>}$ consists of those atoms with lengths less than, equal to, greater than the exponent, resp. Moreover, we set $\zeta^{\ge} = \zeta^{=}\zeta^{>}$.
We start by establishing the following assertion.

\noindent
\textbf{Assertion:} $|\zeta^{<}|\le \eta(G)- \mathsf{D}(G^-)$.\\
\emph{Proof of Assertion:}
We recall that $\pi(\xi)$, for a factorization $\xi$,
is a zero-sum sequence; thus $|\pi(\xi)|$ denotes the length of this sequence (the number of elements from $G$ in this sequence),
while $|\xi|$ denotes the length of the factorization (the number of 
minimal zero-sum sequence in this factorization). 
We observe that $|\zeta^{\ge}|= \max \mathsf{L}(\pi(\zeta^{\ge}))$.
By Remark \ref{ub_rem}.2 we thus get
$|\zeta^{\ge}|\ge (|\pi(\zeta^{\ge})|-\eta(G)+1)/\exp(G)$, thus $|\pi(\zeta^{\ge})|\le  \exp(G) |\zeta^{\ge}| +\eta(G)-1$ .
Since $|\zeta^{\ge}|+|\zeta^{<}|= k$,
we get $|\pi(\zeta^{\ge})|\le \exp(G) (k-|\zeta^{<}|) + \eta(G)-1$ and
$\exp(G) |\zeta^{<}|+  |\pi(\zeta^{\ge})|\le \exp(G) k + \eta(G)-1$.
We observe that $|\pi(\zeta^{<})|\le |\zeta^{<}| (\exp(G)-1)$.
Thus, we get
$\exp(G) k + \eta(G)-1\ge |\zeta^{<}|+ |\pi(\zeta^{<})|+  |\pi(\zeta^{\ge})|= |\zeta^{<}|+|B|$.
By  \eqref{elb_eq_2} we have $|B| \ge \mathsf{D}(G^-)-1+ k \exp(G)$
and consequently
$\exp(G) k + \eta(G)-1\ge   |\zeta^{<}|+|B| \ge |\zeta^{<}|+ \mathsf{D}(G^-) - 1 +  k \exp(G)$ and
$|\zeta^{<}|\le \eta(G) - \mathsf{D}(G^-)$, proving the assertion.

We note that there exists some minimal zero-sum sequence $U\in \mathcal{A}(G)$ such that $\mathsf{v}_U(\zeta^{\ge})\ge  |\zeta^{\ge}|/ |\mathcal{A}(G)|$.
Let $\ell \in \mathbb{N}_0$ maximal such that $\mathsf{v}_U(\zeta^{\ge})\ge \ell \exp(G)$. We note that
\[
\ell \ge \lfloor|\zeta^{\ge}|/ (|\mathcal{A}(G)|\exp(G))\rfloor \ge \lfloor(k - (\eta(G)-  \mathsf{D}(G^-)))/(\exp(G)|\mathcal{A}(G)|)\rfloor.
\]
Thus, it suffices to show that for some $g \in G$ with $\ord(g)=\exp(G)$, we have $\mathsf{v}_{g^{\ord(g)}}(\zeta_g)\ge \ell$.
For $\ell = 0$ there is nothing to prove. Thus, suppose $\ell > 0$.
Let $\zeta_e = U^{\exp(G)}\in \mathsf{Z}(G)$, i.e., the factorization
consisting of $U$ repeated $\exp(G)$ times.
Let $\zeta_g' = \prod_{h \in G} (h^{\ord(h)})^{\exp(G)\mathsf{v}_h(U)/\ord(h)}\in \mathsf{Z}(G)$.
Then $\pi(\zeta_g') = \pi(\zeta_e)$ and $|\zeta_g'| = \exp(G)\mathsf{k}(U) \ge |U| \ge \exp(G) = |\zeta_e|$.
Thus $\zeta_g = \zeta_e^{-\ell}\zeta_g'^{\ell}\zeta$ is a factorization of $B$ and $|\zeta_g|\ge |\zeta|$.
By the maximality of $|\zeta|$ it follows that indeed $|\zeta_g|= |\zeta|$, i.e., $|\zeta_g'| = \exp(G)$ implying that $\ord(h) = \exp(G)$ for each $h \in \supp(U)$ and $|U| = \exp(G)$.
Thus, for $g \in \supp(U)$ we have $\mathsf{v}_{g^{\ord(g)}}(\zeta_g)\ge \ell$ implying the claim.
\end{proof}

\begin{proof}[Proof of Proposition \ref{rlb_prop_exp}]
The main point is to show that $\mathsf{D}_{k+1}(G) \ge \mathsf{D}_k(G) + \exp(G)$. 
Let $B$ be a zero-sum sequence over $G$ such that $\max \mathsf{L}(B)=k$ and $|B|=\mathsf{D}_k(G)$.  By Lemma \ref{rlb_lem_exp2}
there exists some $g \in G$ with $\ord(g)=\exp(G)$ and some factorization $\zeta_g$ of $B$ with $|\zeta_g|=k$ such that
\[
\mathsf{v}_{g^{\ord(g)}}(\zeta_g)\ge \lfloor(k - (\eta(G) - \mathsf{D}(G^-)))/(\exp(G)|\mathcal{A}(G)|)\rfloor \ge \delta(G).
\]
By Lemma \ref{rlb_lem_exp1} applied with $U=g^{\ord(g)}$,
it follows that $\max \mathsf{L} (Bg^{\ord(g)}) = k + 1$.
Thus, $\mathsf{D}_{k+1}(G)\ge |Bg^{\ord(g)}| = \mathsf{D}_{k}(G)+\exp(G)$.

To complete the proof, it remains to assert that 
$\mathsf{D}_{k+1}(G) \le \mathsf{D}_k(G) + \exp(G)$.
This follows immediately by Proposition \ref{ub_prop_bas}.3, provided that $k \ge \eta(G) - \exp(G) -1$.
By our assumption on $k$, this is clear for $|G|\le 2$, 
and for $|G|\ge 3$ we note that $\delta(G)\neq 0$ and $\mathsf{D}(G^{-})\le \mathsf{D}(G)\le |\mathcal{A}(G)|$, implying the condition. 
\end{proof}

In combination with Remark \ref{ub_rem}.3, Proposition \ref{rlb_prop_exp} directly yields an upper bound for $k_{\mathsf{D}}(G)$ (cf.~Proof of Theorem \ref{rlb_thm_expl}).
Yet, without further investigation, this bound is not explicit.
So far it is only known that $\delta(G)$ is finite (cf.~Section \ref{prel}), yet no explicit bound is known (the other quantities that are involved in the above upper bound can be easily replaced by explicit upper bounds, cf.~Proof of Theorem \ref{rlb_thm_expl}).
Thus, we establish an explicit upper bound for $\delta(G)$; this is of some
independent interest as it yields information on the analog of 
that invariant for Krull monoids with finite class group (see \cite{GGSS} for recent results on this invariant for Krull monoids with infinite cyclic class group). 
 To do so, we combine the proof of its finiteness by A.~Foroutan \cite{foroutan06} with a result of P.~Diaconis, R.~L.~Graham, and B.~Sturmfels \cite{diaconisetal96};  our goal is a simple bound, thus even with the present proof some improvement could be achieved easily, e.g., by not simplifying certain expressions
or by using better bounds for the Davenport constant.

\begin{theorem}
\label{rlb_thm_sd}
Let $G$ be a finite abelian group. Then $\delta(G)\le  (2|G|)^{3|G|+1}$.
\end{theorem}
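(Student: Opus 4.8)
The plan is to translate the statement into the language of lattice points and to invoke the two ingredients indicated above: Foroutan's finiteness argument \cite{foroutan06} to reduce $\delta(G)$ to the control of a single ``primitive'' relation between factorizations, and the bound of Diaconis, Graham, and Sturmfels \cite{diaconisetal96} on primitive partition identities to make this control explicit. Identifying $\mathsf{Z}(G)$ with $\mathbb{N}_0^{\mathcal{A}(G)}$, the set of factorizations of a fixed $B \in \mathcal{B}(G)$ is a coset of the kernel lattice $\mathcal{L} = \{v \in \mathbb{Z}^{\mathcal{A}(G)} \colon \sum_{A} v_A A = 0\}$, where each atom $A$ is read as its multiplicity vector in $\mathbb{Z}^{G}$. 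The essential observation is that every $v \in \mathcal{L}$ splits conformally as $v = v^+ - v^-$ with $\pi(v^+) = \pi(v^-) =: B'$, so that $v^+$ and $v^-$ are two factorizations of the same zero-sum sequence $B'$; since each atom has length between $1$ and $\mathsf{D}(G) \le |G|$, the associated length identity $\sum_{A} v^+_A |A| = \sum_{A} v^-_A |A| = |B'|$ is a partition identity with parts in $[1,|G|]$, which is precisely the kind of object bounded in \cite{diaconisetal96}.

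First I would fix $\zeta \in \mathsf{Z}(G)$, put $B = \pi(\zeta)$, and let $k$ be a length adjacent to $|\zeta|$ in $\mathsf{L}(B)$; by symmetry assume $k > |\zeta|$ and set $c = k - |\zeta|$, so $c \in \Delta(\mathsf{L}(B)) \subset [1,\mathsf{D}(G)-2]$. Among all $\xi \in \mathsf{Z}(B)$ with $|\xi| = k$ I would choose one minimizing $\mathsf{d}(\zeta,\xi)$, and write $v = \xi - \zeta$ in conformal form $v^+ - v^-$ after cancelling $\gcd(\zeta,\xi)$. Then $\mathsf{d}(\zeta,\xi) = \max\{\|v^+\|_1,\|v^-\|_1\} \le |B'|$, so it suffices to bound $|B'|$, the total length of the swapped part of this minimal connection.

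The heart of the argument is to control the conformal decomposition $v = \sum_{j} g_j$ of $v$ into Graver elements (conformally primitive relations) of $\mathcal{L}$. Writing $d_j$ for the factorization-length change (the atom-count degree) effected by $g_j$, conformality guarantees that $\zeta + \sum_{j \in T} g_j$ is a genuine factorization of $B$ for every subset $T$, of length $|\zeta| + \sum_{j \in T} d_j$. Two constraints now pin down the $d_j$. Adjacency forces $\sum_{j \in T} d_j \notin (0,c)$ for every $T$, since otherwise an intermediate factorization length would lie strictly between the adjacent lengths $|\zeta|$ and $k$; and distance-minimality of $\xi$ forces $\sum_{j \in T} d_j \notin \{0,c\}$ for every proper nonempty $T$, since otherwise either $\zeta + \sum_{j \in T} g_j$ or $\zeta + \sum_{j \notin T} g_j$ would be a strictly closer factorization of length $k$. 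Combining these, no proper nonempty sub-sum of the $d_j$ meets $[0,c]$; a short subset-sum lemma then bounds the number of pieces $N$ in terms of $c \le |G|$ and the maximal size of a single $g_j$. The size of each $g_j$ is in turn where \cite{diaconisetal96} enters: each $g_j$ is a relation and hence carries a partition identity on its atom-lengths with parts in $[1,|G|]$, and the explicit bound there on the relevant identities limits $|\pi(g_j^+)|$, hence $\|g_j\|_1$ up to a factor of $2$.

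Assembling, $\mathsf{d}(\zeta,\xi) \le |B'| = \sum_j |\pi(g_j^+)| \le N \cdot \max_j |\pi(g_j^+)|$, and taking the supremum over $\zeta$ bounds $\delta(G)$; the deliberately crude estimates $\mathsf{D}(G)\le |G|$, the bound on $|\mathcal{A}(G)|$ coming from counting sequences over $G$ of length at most $|G|$, and the DGS bound on the partition identities are meant to collapse into $(2|G|)^{3|G|+1}$ once one declines to optimize. I expect the main obstacle to be the step that makes Foroutan's non-constructive finiteness explicit: one must isolate exactly which finite quantity her compactness argument conceals, verify that it is governed by primitive partition identities of $[1,|G|]$ so that \cite{diaconisetal96} applies to the individual Graver pieces, and then run the subset-sum lemma bounding $N$ carefully enough that the interaction between the number of pieces and their individual (exponentially large) sizes still fits under the target bound.
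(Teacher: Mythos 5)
You have the right two ingredients, and in outline this is the paper's strategy, but the quantitative core of your plan does not go through. The step that bounds an individual primitive (Graver) element $g_j$ of the kernel lattice by appealing to the Diaconis--Graham--Sturmfels bound on \emph{primitive partition identities} with parts in $[1,|G|]$ is a misapplication: primitivity of $g_j$ as a lattice relation does not make the induced length identity $\sum_A \mathsf{v}_A(g_j^+)|A|=\sum_A \mathsf{v}_A(g_j^-)|A|$ primitive. A proper sub-identity of that partition identity only matches total \emph{lengths}; it need not match the group-element content, hence yields no proper sub-relation of $g_j$ and no contradiction with its primitivity. So the DGS bound on partition identities gives you nothing about $\|g_j\|_1$. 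What is actually needed --- and what the paper does --- is the general theorem of \cite{diaconisetal96} on minimal relations among an arbitrary finite set of lattice points, applied to the full multiplicity vectors $\varphi(A)=(\mathsf{v}_g(A))_{g\in G\setminus\{0\}}$ (augmented by a counting coordinate $1$, and by the auxiliary vector $(\overline{0},d)$ encoding the length difference), with the resulting bound controlled via determinants of $|G|\times|G|$ minors, estimated by Hadamard or the Johnson--Newman inequality. That is where the shape $(2|G|)^{3|G|+1}$ comes from; the partition-identity bound is polynomial in $|G|$ and could not play this role.

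The second gap is structural: your Graver-decomposition-plus-subset-sum argument is an attempt to re-derive Foroutan's reduction, and the ``short subset-sum lemma'' you invoke is neither stated nor obviously true --- the conditions that $\sum_{j\in T}d_j\notin[0,c]$ for every proper nonempty $T$ and that $|d_j|\le M$ do not in any evident way bound the number $N$ of pieces (the all-positive-pieces subset, for instance, is only forced to exceed $c$), and the strict decrease of $\mathsf{d}(\zeta,\cdot)$ when passing to a proper conformal part can fail since only one of $\|v^+\|_1,\|v^-\|_1$ may drop. The paper avoids all of this: Foroutan's theorem already delivers the explicit reduction $\delta(G)\le\max\{\delta(d)\colon |d|\in\Delta(G)\}$, where $\delta(d)$ is the maximal value of $\max\{|\zeta|,|\zeta'|\}$ over pairs with $\pi(\zeta)=\pi(\zeta')$ and $|\zeta|=|\zeta'|+d$ that are \emph{minimal} with this property (no proper divisor pair has it). One then applies the DGS theorem once, to the whole minimal pair, and concludes immediately that a pair exceeding the stated bound cannot be minimal; no decomposition into pieces, and no counting of pieces, is required. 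If you replace your two problematic steps by this packaging, your argument becomes the paper's proof.
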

\begin{proof}
The result is trivial for $|G| \le 2$, since in this case $\Delta(G)=\emptyset$. Thus, we assume that $|G| \ge 3$.
We follow \cite{foroutan06}, yet for technical reasons we treat $d$ and $-d$ separately. For $d\in \mathbb{Z}$ such that $|d| \in \Delta(G)$, let $\delta(d)$ denote the maximum over all
$\max\{|\zeta|,|\zeta'|\}$ where $(\zeta,\zeta')\in \mathsf{Z}(G)\times \mathsf{Z}(G)$ with $\pi(\zeta)=\pi(\zeta')$ and $|\zeta|=|\zeta'|+d$ (i.e., we consider pairs of factorizations of the same zero-sum sequences such 
that the length of the factorizations differ by a prescribed value), and $(\zeta,\zeta')$ is minimal with this property (i.e., for each proper $\xi\mid \zeta$ and $\xi'\mid \zeta'$ we have $\pi(\xi)\neq \pi(\xi')$ or $|\xi| \neq |\xi'|+d$).

By \cite{foroutan06}, it is known that $\delta(G)\le \max\{\delta(d)\colon d \in \mathbb{Z}, \, |d| \in \Delta(G)\}$.

We assume to the contrary that $\delta(d)>  (2|G|)^{3|G|+1}$ for some $d\in \mathbb{Z}$ with $|d| \in \Delta(G)$.
Let $(\zeta,\zeta')\in \mathsf{Z}(G)\times \mathsf{Z}(G)$, fulfilling the above conditions, that attains the maximum $\delta(d)$.  By the minimality, we get that $0 \nmid \zeta$ and $0\nmid \zeta'$.
Let $\varphi: \mathcal{F}(G\setminus \{0\}) \to \mathbb{Z}^{G\setminus \{0\}}$, denote the map defined via $\varphi(S)=(\mathsf{v}_g(S))_{g \in G\setminus \{0\}}$, and let
$\varphi^{\ast}: \mathcal{F}(G\setminus \{0\})\to \mathbb{Z}^{G\setminus \{0\}}\times \mathbb{Z}$ be defined via $\varphi^{\ast}(S)=(\varphi(S),1)$.

Let $\zeta= \prod_{i=1}^{n}A_i$ and $\zeta'= \prod_{i=1}^{n+d}A_i'$ with $A_i,A_i'\in \mathcal{A}(G)$.
Then $\sum_{i=1}^{n}\varphi^{\ast}(A_i) + (\overline{0},d )= \sum_{i=1}^{n+d}\varphi^{\ast}(A_i')$.

By \cite[Theorem 1]{diaconisetal96}, applied, say, to the set $\mathcal{A}=\{\varphi^{\ast}(S)\colon S\in \mathcal{F}(G\setminus \{0\}),\, |S|\le \mathsf{D}(G)\}\cup \{(\overline{0},d)\}\subset  \mathbb{Z}^{G\setminus \{0\}}\times \mathbb{Z}$,
we get that this relation is not minimal; the bound that is provided by that result, for $|\zeta|+|\zeta'|+1$, is $(2|G|)^{|G|}(|G|+1)^{|G|+1}\mathcal{D}$, where $\mathcal{D}$ is the maximum of the absolute values of the determinants of the $|G|\times |G|$ minors of the $|\mathcal{A}|\times |G|$ matrix $(a \colon a \in \mathcal{A})$. Using well-known bounds on the determinant, e.g., Hadamard's inequality (replacing the Euclidean norm by the $1$-norm) or more directly the bound of C.R.~Johnson and M.~Newman \cite{johnsonnewman80} (also see \cite{lev08} for an alternate proof), we get
$\mathcal{D}\le |\mathsf{D}(G)+1|^{|G|}$ (note that $|d|\le \max \Delta(G)\le \mathsf{D}(G)-2$). Using $\mathsf{D}(G)\le |G|$ and performing some immediate simplifications, the claim is established.

Thus, there exists some $\emptyset \neq I\subsetneq [1,n]$   and $\emptyset \neq J\subsetneq [1,n + d]$ such that
 $\sum_{i\in I}\varphi(A_i) + (\overline{0},d )= \sum_{j \in J}\varphi(A_j')$; note that we may assume that $(\overline{0},d )$ is part of the sum, since otherwise we could consider the complements, and that still $I$ has to be non-empty, since $(\overline{0},d )$ can not be the only element involved in such an equality.
Setting $\xi= \prod_{i\in I}A_i$ and $\xi'= \prod_{j\in J}A_j$ we get that $|\xi|= |I|=|J|+d= |\xi'|+d$ and
$\pi(\xi)=\pi(\xi')$ a contradiction to the minimality of $(\zeta,\zeta')$.
\end{proof}

Now, we combine the results to get an upper bound for $k_{\mathsf{D}}(G)$.

\begin{theorem}
\label{rlb_thm_expl}
Let $G$ be a finite abelian group. Then \[k_{\mathsf{D}}(G)\le \delta(G)\exp(G)|\mathcal{A}(G)|+ \eta(G)-\mathsf{D}(G^-).\]
In particular, $k_{\mathsf{D}}(G)\le (2|G|)^{4|G|+2}$.
\end{theorem}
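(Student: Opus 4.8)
The plan is to combine Proposition~\ref{rlb_prop_exp} with the bound on the successive distance from Theorem~\ref{rlb_thm_sd}, and to discharge the remaining quantities by crude but explicit estimates. First I would recall the meaning of $k_{\mathsf{D}}(G)$: by Lemma~\ref{as_lem} and the definition following it, $k_{\mathsf{D}}(G)$ is the least $k$ from which $\mathsf{D}_k(G) = \mathsf{D}_0(G) + k\exp(G)$ holds for all larger indices. Proposition~\ref{rlb_prop_exp} guarantees $\mathsf{D}_{k+1}(G) = \mathsf{D}_k(G) + \exp(G)$ once $k \ge \delta(G)\exp(G)|\mathcal{A}(G)| + (\eta(G) - \mathsf{D}(G^-))$. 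So if I set $k^\ast = \delta(G)\exp(G)|\mathcal{A}(G)| + \eta(G) - \mathsf{D}(G^-)$, the equation $\mathsf{D}_{k+1}(G) = \mathsf{D}_k(G) + \exp(G)$ holds for every $k \ge k^\ast$. This means the sequence $(\mathsf{D}_k(G) - k\exp(G))_{k \ge k^\ast}$ is constant, hence by the definition of $k_{\mathsf{D}}(G)$ we get $k_{\mathsf{D}}(G) \le k^\ast$, which is exactly the first displayed inequality.

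For the ``in particular'' statement, the plan is to bound each factor in $k^\ast$ by a power of $2|G|$ and then combine. I would substitute $\delta(G) \le (2|G|)^{3|G|+1}$ from Theorem~\ref{rlb_thm_sd}. For the remaining quantities I would use the elementary bounds recalled in Section~\ref{prel}: $\exp(G) \le |G|$, $\eta(G) \le |G|$, $\mathsf{D}(G^-) \ge 1$, and most crucially a bound on the number of atoms $|\mathcal{A}(G)|$. Since every minimal zero-sum sequence has length at most $\mathsf{D}(G) \le |G|$, and there are $|G|$ group elements available for each position, the number of squarefree-or-not atoms is at most the number of sequences of length at most $|G|$, which is bounded by something like $(|G|+1)^{|G|}$ or, more crudely, $|G|^{|G|} \le (2|G|)^{|G|}$. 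I would pick whichever crude bound makes the final arithmetic cleanest.

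Putting these together, I would estimate
\[
k^\ast \le (2|G|)^{3|G|+1} \cdot |G| \cdot (2|G|)^{|G|} + |G|,
\]
and then absorb the lower-order terms. The exponents $3|G|+1$ and $|G|$ add to $4|G|+1$; the extra factors of $|G|$ and the additive $+|G|$ are each dominated by one more factor of $2|G|$, giving the clean bound $(2|G|)^{4|G|+2}$. The only genuine content here is Theorem~\ref{rlb_thm_sd}; everything else is bookkeeping, so I expect no real obstacle, only the need to choose the atom-count bound and the rounding of exponents so that the final simplification lands exactly on $(2|G|)^{4|G|+2}$ rather than something messier. I would double-check the small cases $|G| \le 2$ separately, where $\delta(G) = 0$ and the bound holds trivially.
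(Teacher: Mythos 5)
Your proposal is correct and follows essentially the same route as the paper: the first inequality is read off directly from Proposition~\ref{rlb_prop_exp} (constancy of $\mathsf{D}_k(G)-k\exp(G)$ from $k^\ast$ on), and the ``in particular'' part comes from substituting Theorem~\ref{rlb_thm_sd} together with the crude bounds $|\mathcal{A}(G)|\le |G|^{\mathsf{D}(G)}\le |G|^{|G|}$, $\exp(G)\le |G|$, $\eta(G)\le |G|$. The slack provided by the powers of $2$ in $(2|G|)^{4|G|+2}$ absorbs the lower-order terms exactly as you anticipate, so the bookkeeping goes through.
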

\begin{proof}
By Proposition \ref{rlb_prop_exp} we know that for $k\ge  \delta(G)\exp(G)|\mathcal{A}(G)|+ (\eta(G)- \mathsf{D}(G^-))$ we have $\mathsf{D}_{k+1}(G) = \mathsf{D}_k(G) + \exp(G)$.

To get the ``in particular'' statement, we use the upper bound on $\delta(G)$ derived in Theorem \ref{rlb_thm_sd}, the crude bound  $|\mathcal{A}(G)|\le |G|^{\mathsf{D}(G)}\le |G|^{|G|}$, $\exp(G) \le |G|$, and recall that $\eta(G)\le |G|$.
\end{proof}
In Section \ref{e2g} we establish a much better bound for $k_{\mathsf{D}}(G)$ for elementary $2$-groups.

\section{$\mathsf{D}_k(G)$ for elementary $2$-groups}
\label{e2g}

In this section we consider the $k$-wise Davenport constants for elementary $2$-groups, to illustrate and complement the general results of the preceding sections with more explicit results. As indicated in Section \ref{int}, elementary $2$-groups are an interesting class of groups in this regard, since on the one hand the gap between $\mathsf{D}(G)$ and $\eta(G)$ is in general quite large, the former being $\mathsf{r}(G)+1$ and the latter $|G| = 2^{\mathsf{r}(G)}$ (cf.~Remark \ref{ub_rem} for the relevance of these invariants in this context), and on the other hand investigations of zero-sum problems in this type of group are simplified due to fact that the structure of minimal zero-sum sequences over these groups is simple and known precisely (cf.~below).

Still, it seems that to determine $k$-wise Davenport constants of elementary $2$-groups is a challenging task, and we are only able to obtain improved (relative to the general case) bounds for these constants and to determine them in special cases.
The results on the values of $\mathsf{D}_k(G)$ are given in Subsection \ref{e2g_subs_main}. In Subsection \ref{e2g_subs_aux} we recall and obtain various results that are needed in these investigations.

\subsection{Technical results}
\label{e2g_subs_aux}

We collect some well-known facts on the structure of (minimal) zero-sum sequences over elementary $2$-groups that will be used frequently and without reference.  For a detailed investigation of this type of questions, for general finite abelian groups, see, e.g., \cite{gaogeroldinger99} and \cite{gaogeroldingersurvey}.
Let $r \in \mathbb{N}$.
Since $C_2^r$ is (in a natural way) a vector space over the field with two elements, a sequence $S\in \mathcal{F}(C_2^r)$ has no non-empty zero-sum subsequence if and only if $\supp(S)$ is (linearly) independent. Thus, each minimal zero-sum sequence over $C_2^r$, except $0$, is of the form $(e_1+\dots+e_s)\prod_{i=1}^se_i$ with independent $e_i$s. And, $\mathsf{D}_1(C_2^r)=r+1$.

A sequence $S\in \mathcal{F}(C_2^r)$ has no zero-sum subsequence of length $1$ if and only if $0 \nmid S$ (clearly this is true for any finite abelian group); moreover, $S$ has no zero-sum subsequence of length $2$ if and only if $S$ is squarefree. In particular, $\eta(C_2^r)=2^r$.

By the remark on the structure of minimal zero-sum sequences, it follows that there exists a squarefree minimal zero-sum sequence over $C_2^r$ of length $s$ if and only if $s\in [3, r+1]$. Since, for $r\ge 2$, $\sum_{g \in C_2^r}g=0$, the existence and non-existence of squarefree zero-sum sequences of length close to $|C_2^r|$ often can be decided easily using this fact and the preceding remark (via considering the squarefree sequence of elements not contained in the original sequence).

We continue with the following simple observation.
\begin{lemma}
\label{e2g_lem_l3}
Let $r\in \mathbb{N}$ and let $S$ be a sequence over $ C_2^r$.
Then, $S$ has no non-empty zero-sum subsequence of length at most $3$ if and only if
$S$ is squarefree and $\supp(S)$ is sum-free.
Additionally, $S$ has no non-empty zero-sum subsequence of lengths at most $4$ if and only if
$S$ has no non-empty zero-sum subsequence of lengths at most $3$ and $\supp(S)$ is a Sidon set.
\end{lemma}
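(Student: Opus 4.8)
The plan is to exploit the special arithmetic of $C_2^r$: since $-g=g$ for every $g$, a subsequence $g_1\cdots g_k$ is a zero-sum precisely when $g_1+\dots+g_k=0$, and this relation may be rewritten freely by moving terms across the equality. I would also invoke the two facts recorded just above the lemma, namely that $S$ has no zero-sum subsequence of length $1$ iff $0\nmid S$, and that $S$ has no zero-sum subsequence of length $2$ iff $S$ is squarefree (the latter because a length-$2$ zero-sum $gh$ forces $h=-g=g$). Both equivalences then reduce to analysing length-$3$ and length-$4$ zero-sums under the standing hypothesis that $S$ is squarefree, in which case every subsequence consists of pairwise distinct elements.

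For the first equivalence, I would first observe that ``no zero-sum subsequence of length $\le 3$'' already entails ``no zero-sum of length $\le 2$,'' hence $S$ is squarefree and $0\notin\supp(S)$; conversely, sum-freeness of $\supp(S)$ forces $0\notin\supp(S)$ (take $a=b$ in the definition, so that $a+a=0\neq c$ for every $c\in\supp(S)$), which rules out the length-$1$ zero-sum. Given squarefreeness, a zero-sum subsequence of length exactly $3$ is $g_1g_2g_3$ with distinct $g_i$ and $g_1+g_2+g_3=0$, which rewrites as $g_3=g_1+g_2$; this is exactly a failure of sum-freeness by three distinct elements. Thus, under squarefreeness, absence of length-$3$ zero-sums is equivalent to sum-freeness of $\supp(S)$, and assembling these observations yields the stated equivalence in both directions.

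For the second equivalence, since both sides contain ``no zero-sum of length $\le 3$,'' by the first part I may assume throughout that $S$ is squarefree and argue only about length-$4$ zero-sums versus the Sidon property of $A=\supp(S)$. The key characteristic-$2$ observation is that any purported Sidon violation $a+b=c+d$ with $|\{a,b,c,d\}|\ge 3$ must in fact use four \emph{distinct} elements: a single coincidence among $a,b,c,d$ (say $a=b$, giving $c+d=0$, i.e.\ $c=d$; or $a=c$, giving $b=d$; and so on) forces a second coincidence and collapses the set to size $\le 2$, because $g+g=0$. Hence $A$ fails to be Sidon iff there are four distinct elements of $A$ with $a+b=c+d$, equivalently $a+b+c+d=0$, equivalently (by squarefreeness) a genuine zero-sum subsequence of length $4$. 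Reading this as a biconditional in both directions gives the claim.

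The arguments are essentially bookkeeping once the characteristic-$2$ rewriting is in place, so I do not anticipate a serious obstacle. The one point requiring care is the mismatch between the two definitions: sum-freeness quantifies over $a,b,c$ while allowing $a=b$, whereas the Sidon condition quantifies over $a,b,c,d$ subject to at least three distinct values. Reconciling these---using $0\notin\supp(S)$ to dispose of the $a=b$ instance of sum-freeness, and the characteristic-$2$ collapse to reduce Sidon violations to four distinct elements---is the only place where one must be attentive rather than mechanical.
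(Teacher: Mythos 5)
Your proof is correct and follows essentially the same route as the paper's: reduce via the length-$1$ and length-$2$ observations to squarefree $S$ with $0\nmid S$, note that sum-freeness forces $0\notin\supp(S)$, and then translate length-$3$ and length-$4$ zero-sums into the sum-free and Sidon conditions using $\sigma(g_1g_2g_3)=0\iff g_1+g_2=g_3$ and $\sigma(g_1g_2g_3g_4)=0\iff g_1+g_2=g_3+g_4$. You spell out the characteristic-$2$ collapse of coincidences (a Sidon violation with at least three distinct values must involve four distinct values) more explicitly than the paper does, but this is a matter of detail, not of method.
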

\begin{proof}
We observe that if $\supp(S)$ is sum-free, then $0 \notin \supp(S)$. In view of the remarks above it remains to consider squarefree $S$ with $0\nmid S$.
We note that for $g_1,g_2,g_3 \in C_2^r\setminus \{0\}$ we have $\sigma(g_1g_2g_3)=0$ if and only if $g_1+g_2=g_3$; and, since none of the elements is $0$, $g_1+g_2=g_3$ implies that the elements are distinct. And, the first claim follows.
To see the second claim, it suffices to note that for $g_1,g_2,g_3,g_4\in C_2^r$ we have $\sigma(g_1g_2g_3g_4)=0$ if and only if $g_1+g_2=g_3+g_4$.
\end{proof}

This observation is useful for our investigations, since the structure of sufficiently large sum-free sets in $C_2^r$ is known precisely.
We recall a special case of a result due to A.~A. Davydov and L.~M. Tombak \cite{davydovtombak89}, as given in \cite{grynkiewiczlev}; for ease of application in the following arguments, we rephrase it, via the characterization given in Lemma \ref{e2g_lem_l3}, in the way it is applied in the present paper.

\begin{theorem}
\label{e2g_thm_l3ex}
Let $r \in \mathbb{N}$. Let $S$ be a squarefree sequence over $C_2^r$ with $0 \nmid S$ and $|S|\ge 9 ( 2^{r-5} )$.
Then, the following statements are equivalent
\begin{itemize}
  \item $S$ has no non-empty zero-sum subsequence of length $3$.
  \item $\supp(S)$ is contained in the non-zero coset of a subgroup of index $2$ or $\supp(S)$ is contained in $\{e_1,e_2,e_3,e_4,(e_1+e_2+e_3+e_4)\}+G'$ where
$G'$ is a subgroup of index $16$ and $C_2^r = \langle e_1, \dots, e_4\rangle \oplus G'$.
\end{itemize}
In particular, $\mathsf{s}_{\le 3}(C_2^r)=1 + 2^{r-1}$.
\end{theorem}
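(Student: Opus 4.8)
The plan is to translate the stated equivalence into a pure statement about sum-free subsets of $G = C_2^r$, invoke the Davydov--Tombak classification for the nontrivial implication, and then read off the value of $\mathsf{s}_{\le 3}(C_2^r)$ by a short counting argument. First I would observe that, since $S$ is squarefree with $0 \nmid S$, it has no zero-sum subsequence of length $1$ or $2$; hence for such $S$ the hypothesis ``no non-empty zero-sum subsequence of length $3$'' is identical to ``no non-empty zero-sum subsequence of length at most $3$,'' and by Lemma \ref{e2g_lem_l3} the latter holds precisely when $A := \supp(S)$ is sum-free. Thus the theorem reduces to the claim that a sum-free set $A \subseteq G$ with $|A| \ge 9(2^{r-5})$ is contained either in the non-zero coset of an index-$2$ subgroup or in a configuration of the form $\{e_1,e_2,e_3,e_4,e_1+e_2+e_3+e_4\}+G'$ with $[G:G']=16$.

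For the direction asserting that a large sum-free set must have one of these two shapes, I would simply cite the relevant special case of the classification of large sum-free sets in elementary $2$-groups due to Davydov and Tombak \cite{davydovtombak89}, in the form recorded in \cite{grynkiewiczlev}. This is the deep input and the main obstacle: reproving the full classification is a substantial undertaking that I would not attempt to reproduce. The converse direction, that each listed configuration is sum-free, is an elementary check. For a coset $v + H$ with $H$ of index $2$ and $v \notin H$, the sum of any two of its elements lies in $H$ and hence outside $v + H$. For the second configuration one passes to the quotient $G/G' \cong C_2^4$ and verifies that the five images $\overline{e_1},\overline{e_2},\overline{e_3},\overline{e_4},\overline{e_1}+\overline{e_2}+\overline{e_3}+\overline{e_4}$ form a sum-free set there: a sum of two of them has Hamming weight $2$ or $3$ with respect to the basis $\overline{e_1},\dots,\overline{e_4}$, whereas the five listed elements have weight $1$ or $4$; since $A$ meets only the cosets indexed by these five images, $A$ is sum-free.

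Finally I would establish the ``in particular'' statement $\mathsf{s}_{\le 3}(C_2^r) = 1 + 2^{r-1}$ directly, without recourse to the classification. For the lower bound, the non-zero coset of an index-$2$ subgroup is a squarefree sequence of length $2^{r-1}$ whose support is sum-free, so by Lemma \ref{e2g_lem_l3} it has no zero-sum subsequence of length at most $3$; hence $\mathsf{s}_{\le 3}(C_2^r) \ge 1 + 2^{r-1}$. For the upper bound, let $S$ be any sequence with $|S| \ge 1 + 2^{r-1}$. If $0 \mid S$ we have a zero-sum subsequence of length $1$, and if some element occurs at least twice we have one of length $2$; so we may assume $S$ is squarefree with $0 \nmid S$, whence $|\supp(S)| \ge 1 + 2^{r-1}$. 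It then remains to note that no sum-free set $A \subseteq G$ has more than $2^{r-1}$ elements: for any fixed $a \in A$ the sets $A$ and $a + A$ are disjoint, since $z \in A \cap (a + A)$ would yield $a + x = z$ with $a,x,z \in A$, contradicting sum-freeness; thus $2|A| = |A| + |a+A| \le |G| = 2^r$. Consequently $\supp(S)$ is not sum-free, and by Lemma \ref{e2g_lem_l3} the sequence $S$ has a zero-sum subsequence of length $3$, giving $\mathsf{s}_{\le 3}(C_2^r) \le 1 + 2^{r-1}$.
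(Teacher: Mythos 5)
Your proposal is correct and matches the paper's treatment: the paper presents this theorem as a recalled special case of the Davydov--Tombak classification of large sum-free sets in $C_2^r$ (rephrased via Lemma \ref{e2g_lem_l3}) and supplies no proof, so citing that classification for the hard implication and verifying the easy implication and the reduction directly is exactly what is intended. Your self-contained derivation of $\mathsf{s}_{\le 3}(C_2^r)=1+2^{r-1}$ from the elementary bound $|A|\le 2^{r-1}$ for sum-free sets (via the disjointness of $A$ and $a+A$) is a clean way to obtain the ``in particular'' statement without invoking the classification at all.
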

We point out that, except once in the proof of Proposition \ref{e2g_prop_small}, we only use this result for sequences of length greater than $5 ( 2^{r-4} )$, thus avoiding the second type of set in the characterization.

The above arguments can also be used in the converse direction.
\begin{lemma}
\label{e2g_lem_lb}
Let $r\in \mathbb{N}$.
\begin{enumerate}
  \item Let $B$ be squarefree zero-sum sequence over $C_2^r$ with $0\nmid B$. Then $\max \mathsf{L}(B)\le |B|/3$.
  \item Let $G' \subsetneq C_2^r$ be a subgroup and let $e \in C_2^r\setminus G'$. Let $B$ be a squarefree zero-sum sequence over $C_2^r$ with $\supp(B)\subset e+G'$.
  Then, $\max \mathsf{L}(B)\le |B|/4$.
\end{enumerate}
\end{lemma}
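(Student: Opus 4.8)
The plan is to reduce both bounds to a single, purely structural fact about the parts of a factorization. Fix a factorization $B=A_1\cdots A_\ell$ into minimal zero-sum sequences with $\ell=\max\mathsf{L}(B)$. Because $B$ is squarefree, every subsequence of $B$ is squarefree, so each $A_i$ is squarefree; and because $0\nmid B$, none of the $A_i$ is the minimal zero-sum sequence $0$. By the recalled description of the minimal zero-sum sequences over $C_2^r$, a squarefree minimal zero-sum sequence different from $0$ has the shape $(e_1+\dots+e_s)\prod_{i=1}^s e_i$ with independent $e_i$, and squarefreeness forces $s\ge 2$ (for $s=1$ the element $e_1$ would be repeated). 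Hence $|A_i|=s+1\ge 3$ for every $i$, and summing yields $|B|=\sum_{i=1}^\ell|A_i|\ge 3\ell$, which is exactly part~1.

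For part~2 I would keep the same setup and upgrade the per-part length bound from $3$ to $4$ by exploiting the coset condition. First note that $e\notin G'$ gives $0\notin e+G'$, so the hypothesis $\supp(B)\subset e+G'$ still guarantees $0\nmid B$ and the argument above applies, giving $|A_i|\ge 3$. To get evenness, I would consider the canonical epimorphism $\phi\colon C_2^r\to C_2^r/G'$; every term of $B$ lies in $e+G'$ and hence maps to the single element $\phi(e)$, which is nonzero because $e\notin G'$ and therefore has order $2$ in the elementary $2$-group $C_2^r/G'$. Applying $\phi$ to $\sigma(A_i)=0$ gives $0=|A_i|\,\phi(e)$, so $|A_i|$ must be even. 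An even integer that is at least $3$ is at least $4$, so $|A_i|\ge 4$ for each $i$, and $|B|=\sum_{i=1}^\ell|A_i|\ge 4\ell$ yields $\max\mathsf{L}(B)\le|B|/4$.

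There is no serious obstacle here: both statements follow once one observes that the length bounds on $|A_i|$ hold for \emph{every} part of \emph{any} factorization, which is precisely what lets us pass from a bound on individual minimal zero-sum sequences to a bound on $\max\mathsf{L}(B)$. The only point requiring a little care is the parity argument in part~2, where it is essential that $\phi(e)$ is a nonzero element of an elementary $2$-group and hence of order exactly $2$; this is what rules out odd-length parts and forces the length of each part up to $4$.
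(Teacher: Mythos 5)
Your proof is correct and follows essentially the same route as the paper: factor $B$ into minimal zero-sum sequences and bound each factor's length below by $3$ (squarefree, no $0$) resp.\ $4$ (adding the parity forced by the coset condition). The paper states these per-factor bounds without justification; you have simply supplied the details, and they check out.
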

\begin{proof}
Let $B=A_1 \dots A_k$ with minimal zero-sum sequences $A_i$. Since for each $i$ we have $|A_i| \ge 3$ and $|A_i| \ge 4$, resp., the claim follows.
\end{proof}

Next we establish a simple upper bound for $\mathsf{s}_{\le k}(C_2^r)$, for even $k$.
By Lemma \ref{e2g_lem_l3}, the bound for $k=4$ follows by results on Sidon sets and the general case is an immediate modification of that argument; for clarity we include the short argument.
We point out that by a result of B.~Lindstr{\"o}m \cite{lindstrom}, for $k=4$, this bound is close to optimal.

\begin{lemma}
\label{e2g_lem_D2m}
Let $r \in \mathbb{N}$ and $m \in \mathbb{N}\setminus \{1\}$. Then $\mathsf{s}_{\le 2m}(C_2^r)\le (m-1)+ (m!\ 2^r)^{1/m}$.
\end{lemma}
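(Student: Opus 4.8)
The plan is to use the characterization of $\mathsf{s}_{\le 2m}(C_2^r)$ as one more than the maximal length of a sequence over $C_2^r$ having no non-empty zero-sum subsequence of length in $[1,2m]$, and to bound that maximal length. So let $S$ be such a sequence. As recalled above, the absence of zero-sum subsequences of length $1$ and $2$ forces $0 \nmid S$ and $S$ squarefree, so $S$ may be identified with its support $A = \supp(S) \subseteq C_2^r \setminus \{0\}$, where $|A| = |S|$.

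The heart of the argument is an injectivity/counting step generalizing the Sidon-set bound behind the case $m=2$. I would consider all subsets $T \subseteq A$ with $|T| \le m$ together with their sums $\sigma(T) \in C_2^r$, and claim these sums are pairwise distinct. Indeed, if $T_1 \neq T_2$ both have size at most $m$ and $\sigma(T_1) = \sigma(T_2)$, then, working in the $\mathbb{F}_2$-vector space $C_2^r$, the common elements cancel and $\sigma$ of the symmetric difference $T_1 \triangle T_2$ vanishes; since $T_1 \triangle T_2$ is a non-empty subset of $A$ of size at most $|T_1| + |T_2| \le 2m$, this yields a non-empty zero-sum subsequence of $S$ of length at most $2m$, contradicting the choice of $S$. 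Hence the sums are distinct, so the number of such subsets is at most $|C_2^r| = 2^r$, i.e. $\sum_{j=0}^m \binom{|A|}{j} \le 2^r$.

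It then remains to extract the stated bound. Using Pascal's identity, $\sum_{j=0}^m \binom{|A|}{j} \ge \binom{|A|}{m} + \binom{|A|}{m-1} = \binom{|A|+1}{m}$, and bounding the latter below by replacing each of its $m$ factors $|A|+1, |A|, \dots, |A|-m+2$ by the smallest one gives $\binom{|A|+1}{m} \ge (|A| - m + 2)^m / m!$. Combining, $(|A| - m + 2)^m \le m!\,2^r$, whence $|A| \le (m-2) + (m!\,2^r)^{1/m}$. Since $\mathsf{s}_{\le 2m}(C_2^r)$ is one more than the maximal such $|A|$, the claimed bound $(m-1) + (m!\,2^r)^{1/m}$ follows.

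The one delicate point is getting the additive constant exactly right. Counting only the $m$-element subsets (the most direct analogue of a $B_m$-set) gives just $\binom{|A|}{m} \le 2^r$ and hence only $|A| \le (m-1) + (m!\,2^r)^{1/m}$, which after passing to $\mathsf{s}$ is weaker by one. Including \emph{all} subsets of size at most $m$ and invoking Pascal's identity is precisely what shifts the base of the power from $|A|-m+1$ to $|A|-m+2$ and recovers the clean bound; everything else is a routine estimate. The reduction to $m=2$ recovers the Sidon-set bound $1 + |A| + \binom{|A|}{2} \le 2^r$, consistent with the remark that the case $k=4$ follows from results on Sidon sets.
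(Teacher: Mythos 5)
Your proof is correct and follows essentially the same idea as the paper's: a pigeonhole argument on sums of subsets of size at most $m$, with the symmetric difference of two subsets sharing a sum producing a non-empty zero-sum subsequence of length at most $2m$. The paper packages it slightly differently---it counts only the $m$-element subsets of a sequence of length at least $(m-1)+(m!\,2^r)^{1/m}$, so that $\binom{|S|}{m}\ge 2^r$ forces either a collision or an $m$-subset summing to zero---but this is only a bookkeeping variation (threshold formulation versus your extremal formulation with the empty set included), and both yield the identical bound.
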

\begin{proof}
Let $S \in \mathcal{F}(C_2^r)$ with $|S| \ge (m-1)+ (m!\ 2^r)^{1/m}$.
We have to show that $S$ has a non-empty zero-sum subsequence of length at most $2m$.
Since otherwise the claim follows trivially, we may assume that $S$ is squarefree and does not contain $0$.
By our assumption on $|S|$ it follows that $\binom{|S|}{m}\ge 2^r$. Thus, there exist two distinct subsequence $T_1,T_2$ of $S$ of length $m$ such that $\sigma(T_1)=\sigma(T_2)$ or there exists some subsequence $T$ of $S$ of length $m$ such that $\sigma(T)=0$.
If the latter holds we are done. Thus, assume the former holds. We note that $(\gcd(T_1,T_2))^{-2}T_1T_2$, i.e. we discard all terms common to $T_1$ and $T_2$,  is a zero-sum subsequence of $S$, which is non-empty and has length at most $2m$.
\end{proof}

The following result shows that for various questions it is possible to restrict to squarefree sequences.

\begin{proposition}
\label{e2g_prop_tech}
Let $r \in \mathbb{N}\setminus \{ 1 \}$.
\begin{enumerate}
  \item $k_{\mathsf{D}}(C_2^r) = \min \{k \in \mathbb{N} \colon \mathsf{D}_0(C_2^r) = \mathsf{D}_k(C_2^r) - 2k \}$.
  \item If $B$ is a sequence over $C_2^r$ with 
$\max\mathsf{L}(B)\le k_{\mathsf{D}}(C_2^r)$ and $|B| = \mathsf{D}_{k_{\mathsf{D}}(C_2^r)}(C_2^r)$, then $B$ is squarefree and $0\nmid B$.
\end{enumerate}
In particular, $k_{\mathsf{D}}(C_2^r) \le \lfloor (2^r-1)/3 \rfloor $.
\end{proposition}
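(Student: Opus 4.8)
The plan is to dispatch the three assertions in turn, using $\exp(C_2^r)=2$ throughout and abbreviating $k^\ast = k_{\mathsf{D}}(C_2^r)$. For part~1, I would first note that Lemma~\ref{rlb_lem_lb2} yields $\mathsf{D}_{k+1}(C_2^r)\ge \mathsf{D}_k(C_2^r)+2$ for every $k$, so that the integer sequence $a_k := \mathsf{D}_k(C_2^r)-2k$ is non-decreasing. On the other hand, by Lemma~\ref{as_lem} together with the definitions of $\mathsf{D}_0(C_2^r)$ and $k^\ast$, this sequence is eventually constant equal to $\mathsf{D}_0(C_2^r)$. A non-decreasing sequence that is eventually equal to $\mathsf{D}_0(C_2^r)$ is bounded above by $\mathsf{D}_0(C_2^r)$ and, once it attains this value, retains it; hence its first hitting time of $\mathsf{D}_0(C_2^r)$, which is exactly $\min\{k\in\mathbb{N}\colon \mathsf{D}_0(C_2^r)=\mathsf{D}_k(C_2^r)-2k\}$, equals $k^\ast$. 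This proves part~1. As a byproduct I would record that $a_k<\mathsf{D}_0(C_2^r)$ for every $k<k^\ast$, so that the threshold jump satisfies $\mathsf{D}_{k^\ast}(C_2^r)-\mathsf{D}_{k^\ast-1}(C_2^r)\ge 3$ whenever $k^\ast\ge 2$.

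For part~2, let $B$ be as stated. The relation $0\nmid B$ is exactly the final assertion of Lemma~\ref{rlb_lem_lb2}. To prove squarefreeness I would argue by contradiction: if some element occurs at least twice then, as $0\nmid B$, it is some $g\neq 0$, and since $-g=g$ in $C_2^r$ we have $g^2=(-g)g\mid B$. Writing $B=g^2C$, Lemma~\ref{rlb_lem_lb2} gives $\max\mathsf{L}(B)=1+\max\mathsf{L}(C)$, so $\max\mathsf{L}(C)\le k^\ast-1$ and thus $C\in\mathcal{M}_{k^\ast-1}(C_2^r)$, giving $|C|\le \mathsf{D}_{k^\ast-1}(C_2^r)$. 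But $|C|=|B|-2=\mathsf{D}_{k^\ast}(C_2^r)-2$, forcing $\mathsf{D}_{k^\ast}(C_2^r)-\mathsf{D}_{k^\ast-1}(C_2^r)\le 2$ and contradicting the threshold jump from part~1. The case $k^\ast=1$ I would treat directly from the atom structure: there $B$ is an atom of length $\mathsf{D}_1(C_2^r)=r+1\ge 3$, and the only atoms over $C_2^r$ that are not squarefree or that contain $0$ (namely $0$ and the sequences $g^2$) have length at most $2$.

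For the ``in particular'' statement I would choose $B\in\mathcal{A}_{k^\ast}(C_2^r)$ with $|B|=\mathsf{D}_{k^\ast}(C_2^r)$, so that $\max\mathsf{L}(B)=k^\ast$. By part~2 this $B$ is squarefree with $0\nmid B$, so $\supp(B)\subseteq C_2^r\setminus\{0\}$ and $|B|\le 2^r-1$. Applying Lemma~\ref{e2g_lem_lb}.1 then gives $k^\ast=\max\mathsf{L}(B)\le |B|/3\le (2^r-1)/3$, and since $k^\ast\in\mathbb{N}$ this yields $k_{\mathsf{D}}(C_2^r)\le\lfloor(2^r-1)/3\rfloor$.

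The main obstacle is the second paragraph: the contradiction rests entirely on the \emph{strict} inequality $a_{k^\ast-1}<a_{k^\ast}$ at the threshold, equivalently the jump $\mathsf{D}_{k^\ast}(C_2^r)-\mathsf{D}_{k^\ast-1}(C_2^r)\ge 3$, and securing this is precisely the purpose of combining the universal increment bound of Lemma~\ref{rlb_lem_lb2} with the minimality built into $k^\ast$ in part~1. The remaining ingredients---that adjoining $g^2$ raises $\max\mathsf{L}$ by exactly one, the description of the non-squarefree atoms of $C_2^r$, and the length bound $\max\mathsf{L}(B)\le |B|/3$ of Lemma~\ref{e2g_lem_lb}.1---are routine, with the only genuine subtlety being the need to isolate the edge case $k^\ast=1$, where $\mathsf{D}_{k^\ast-1}$ is not itself a $k$-wise Davenport constant.
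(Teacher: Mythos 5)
Your proof is correct and follows essentially the same route as the paper: part 1 via the monotonicity of $\mathsf{D}_k(C_2^r)-2k$ coming from Lemma \ref{rlb_lem_lb2}, part 2 by stripping off repeated elements using that same lemma together with the minimality established in part 1, and the final bound via Lemma \ref{e2g_lem_lb}.1. The only (cosmetic) difference is in part 2, where the paper removes all repeated elements at once, writing $B=B'T^2$ with $B'$ squarefree and comparing $\mathsf{D}_{k_2}(C_2^r)$ with $\mathsf{D}_0(C_2^r)+2k_2$ for $k_2=\max\mathsf{L}(B')$, whereas you remove a single $g^2$ and contradict the threshold jump $\mathsf{D}_{k^{\ast}}(C_2^r)-\mathsf{D}_{k^{\ast}-1}(C_2^r)\ge 3$, which obliges you (correctly) to handle the edge case $k^{\ast}=1$ separately.
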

\begin{proof}
By Lemma \ref{rlb_lem_lb2} we know that $\mathsf{D}_{k+1}(C_2^r) \ge \mathsf{D}_{k}(C_2^r) +2 $ for each $k \in \mathbb{N}$.
Thus, $\mathsf{D}_0(C_2^r)\ge \mathsf{D}_{k}(C_2^r) - 2k$ for each $k\in \mathbb{N}$ and the first claim follows.

Now let $k_1 = k_{\mathsf{D}}(C_2^r)$  and let $B$
be a zero-sum sequence with $\max \mathsf{L}(B)=k_1$ and $|B|=\mathsf{D}_{k_1}(C_2^r) = \mathsf{D}_0(C_2^r)+2{k_1}$.
We note that $\mathsf{D}_{k}(C_2^r) < \mathsf{D}_0(C_2^r)+2{k}$ for $k \in [1,k_1 - 1]$.

By Lemma \ref{rlb_lem_lb2} we know $0 \nmid B$. Let $B=B'T^2$ with $B'\in \mathcal{B}(C_2^r)$ squarefree and $T \in \mathcal{F}(C_2^r)$.
We note that by Lemma \ref{rlb_lem_lb2}, we have $\max \mathsf{L}(B)= \max\mathsf{L}(B')+|T|$. Thus, we have $|B|- 2\max \mathsf{L}(B)=|B'|  - 2\max \mathsf{L}(B')$. Let $k_2= \max \mathsf{L}(B')$. By definition we have $\mathsf{D}_{k_2}(C_2^r)\ge |B'|$ and thus $\mathsf{D}_{k_2}(C_2^r)\ge \mathsf{D}_0(C_2^r) + 2 k_2$. It follows that $k_2=k_1$, i.e., $|T|=0$.
Thus $B$ is squarefree and the second claim is established. By Lemma \ref{e2g_lem_lb} $k_1 \le |B|/3\le (2^r-1 )/3$, implying the additional claim.
\end{proof}

Now we determine the maximal length of the squarefree sequence consisting of all non-zero elements of $C_2^r$.
\begin{proposition}
\label{e2g_prop_maxfull}
Let $r\in\mathbb{N}\setminus \{1\}$. Let $B$ be the squarefree zero-sum sequence with support $C_2^{r}\setminus \{0\}$.
Then $\max \mathsf{L}(B) = \lfloor (2^r - 1)/3 \rfloor $.
\end{proposition}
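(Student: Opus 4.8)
The plan is to prove the two bounds separately; the upper bound is immediate, and essentially all of the work is an explicit construction realizing the lower bound.

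First I would record the basic data: since the support of $B$ is $C_2^r\setminus\{0\}$ we have $|B|=2^r-1$, and $B$ is genuinely a zero-sum sequence because $\sum_{g\in C_2^r}g=0$ for $r\ge 2$. As $B$ is squarefree with $0\nmid B$, Lemma \ref{e2g_lem_lb}.1 gives $\max\mathsf{L}(B)\le |B|/3=(2^r-1)/3$, and since $\max\mathsf{L}(B)\in\mathbb{N}$ this already yields $\max\mathsf{L}(B)\le\lfloor(2^r-1)/3\rfloor$.

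For the matching lower bound I would exhibit a factorization of $B$ of length exactly $\lfloor(2^r-1)/3\rfloor$, arguing by induction on $r$ in steps of two and using the elementary identity $\lfloor(2^r-1)/3\rfloor=2^{r-2}+\lfloor(2^{r-2}-1)/3\rfloor$, which holds for all $r\ge 2$ because $2^r-1=3\cdot 2^{r-2}+(2^{r-2}-1)$. The two base cases are handled by hand: for $r=2$ the three nonzero elements form a single minimal zero-sum sequence, and for $r=3$ the seven nonzero elements split as the length-$3$ atom $e_1e_2(e_1+e_2)$ together with the length-$4$ sequence supported on the coset $e_3+\langle e_1,e_2\rangle$, which is a single atom by Lemma \ref{e2g_lem_lb}.2.

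For the inductive step, let $r\ge 4$ and write $C_2^r=H\oplus\langle e\rangle$ with $H\cong C_2^{r-1}$; fix an index-$2$ subgroup $K\cong C_2^{r-2}$ and an element $t\in H\setminus K$, so that $H=K\sqcup(t+K)$. The idea is to consume the whole coset $H+e$ using $2^{r-2}$ length-$3$ atoms of the shape $(s+e)\,(x+e)\,(s+x)$ with $s,x\in H$: the two coset terms contribute $e+e=0$, so such a triple is zero-sum, and being of length $3$ it is minimal. Identifying $K$ with the field $\mathbb{F}_{2^{r-2}}$ and choosing $\lambda\in\mathbb{F}_{2^{r-2}}\setminus\{0,1\}$ (possible since $r-2\ge 2$), I would pair, for each $s\in K$, the element $s+e$ with $(\lambda s+t)+e$, forming the atom $(s+e)\,\bigl((\lambda s+t)+e\bigr)\,\bigl((1+\lambda)s+t\bigr)$. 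As $s$ runs over $K$, the terms $s$ exhaust $K$, the partners $\lambda s+t$ exhaust $t+K$ (as $s\mapsto\lambda s$ is bijective), so together these cover $H+e$; meanwhile the third terms $(1+\lambda)s+t$ run bijectively over $H\setminus K$ (since $1+\lambda\ne 0$). Thus these $2^{r-2}$ pairwise disjoint atoms use up all of $H+e$ and exactly the elements of $H\setminus K$, leaving as remaining support precisely $K\setminus\{0\}$. Applying the induction hypothesis to the full zero-sum sequence over $K\cong C_2^{r-2}$ contributes $\lfloor(2^{r-2}-1)/3\rfloor$ further atoms, and altogether the number of atoms is $2^{r-2}+\lfloor(2^{r-2}-1)/3\rfloor=\lfloor(2^r-1)/3\rfloor$, closing the induction.

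I expect the crux to be exactly the construction of this pairing with pairwise distinct ``differences'' $s+x$ sweeping out the coset $H\setminus K$: without distinctness the third terms would repeat and fail to form a subsequence of the squarefree $B$. This is precisely the existence of a \emph{complete mapping} of $K$, and the role of the multiplication on $\mathbb{F}_{2^{r-2}}$ is to produce one explicitly through $s\mapsto\lambda s$; the surrounding verifications (disjointness and minimality of the atoms, and the bookkeeping of which group elements are consumed at each stage) are then routine.
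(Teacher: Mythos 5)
Your proof is correct, and its skeleton is the same as the paper's: both bounds are obtained exactly as you describe, and the lower bound is an induction in steps of two that splits off an index-$4$ subgroup $K$, consumes its three nontrivial cosets with $2^{r-2}$ pairwise disjoint length-$3$ atoms, and invokes the induction hypothesis on $K\setminus\{0\}$. The one genuine difference is how the key combinatorial ingredient --- a bijection $\varphi$ of $C_2^{r-2}$ such that $g\mapsto g+\varphi(g)$ is again a bijection (a complete mapping) --- is produced. The paper isolates this as Lemma \ref{e2g_lem_full} and proves it by its own two-step induction (a special case of Paige's theorem), whereas you construct it in one line by identifying $K$ with $\mathbb{F}_{2^{r-2}}$ and taking $\varphi(s)=\lambda s$ for a fixed $\lambda\notin\{0,1\}$, so that both $s\mapsto\lambda s$ and $s\mapsto(1+\lambda)s$ are bijections; your checks that the three ``columns'' of terms land in three disjoint cosets and are injective in $s$ are exactly what is needed. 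Your route is more self-contained and arguably cleaner for this proposition alone. What the paper's version buys is reusability: the explicit bijection from Lemma \ref{e2g_lem_full} (in particular its concrete form for rank $3$) is invoked again later, in the proof of Lemma \ref{e2g_lem_67}, to control \emph{which} atoms appear in the resulting factorization of $BB'$, so the authors need the constructive complete mapping as a named object rather than just the existence of some length-maximal factorization.
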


To prove this result we use a special case of a result of L.J.~Paige \cite{paige47} (the actual result asserts the existence of a bijection for any finite abelian group that does not have exactly one element of order $2$). Still, we give a full proof of this special case, since its details are relevant for later arguments, and we thus phrase it in a way that highlights these details. Results on refined questions of this type, e.g., on Snevily's conjecture and related questions, recently received considerable attention (for instance, see \cite{arsovski,dasguptaetal01,fengsunxian,harcos} and the references given there).

\begin{lemma}
\label{e2g_lem_full}
Let $r \in \mathbb{N}\setminus \{1\}$. There exists a bijection $\varphi: C_2^r \to C_2^r$ such that
$\{ g + \varphi(g) \colon g \in C_2^r \} = C_2^r$.
\end{lemma}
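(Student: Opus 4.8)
The plan is to realize $C_2^r$ as the additive group of the finite field $\mathbb{F}_{2^r}$ and to take $\varphi$ to be multiplication by a suitable field element. Concretely, since $r \ge 2$ we have $|\mathbb{F}_{2^r}| = 2^r \ge 4$, so there exists $\alpha \in \mathbb{F}_{2^r}$ with $\alpha \notin \{0,1\}$; I would fix such an $\alpha$ and define $\varphi(g) = \alpha g$, where the product is taken in the field.

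First I would check that $\varphi$ is a bijection: multiplication by the nonzero element $\alpha$ is an invertible $\mathbb{F}_2$-linear map on $\mathbb{F}_{2^r}$, hence a bijection of the underlying group $C_2^r$. Next I would verify the image condition. For each $g$ one has $g + \varphi(g) = g + \alpha g = (1+\alpha)g$ by distributivity in the field. Because $\alpha \neq 1$ and the characteristic is $2$, the element $1+\alpha$ is nonzero, so multiplication by $1+\alpha$ is again a bijection of $C_2^r$; therefore $\{g + \varphi(g) \colon g \in C_2^r\} = (1+\alpha)\mathbb{F}_{2^r} = \mathbb{F}_{2^r} = C_2^r$, as required.

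There is no substantial obstacle here: once the field structure is in place, both verifications are immediate, and the existence of $\alpha$ uses only $2^r \ge 4$. The feature I would emphasize, with an eye toward the later proof of Proposition \ref{e2g_prop_maxfull}, is that each triple $(g, \varphi(g), g+\varphi(g))$ is a zero-sum sequence---its three terms sum to $2(g+\varphi(g)) = 0$---and that, as $g$ runs over $C_2^r$, each of the three coordinates runs over the entire group. This simultaneous control over all three coordinates, rather than merely the existence of a complete mapping guaranteed by Paige's theorem, is what makes the construction useful for packing length-three minimal zero-sum sequences when determining $\max \mathsf{L}(B)$ for the full squarefree zero-sum sequence.
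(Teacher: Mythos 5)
Your proof is correct, but it takes a genuinely different route from the paper's. You endow $C_2^r$ with the multiplicative structure of $\mathbb{F}_{2^r}$, pick $\alpha\notin\{0,1\}$ (possible exactly because $r\ge 2$), and set $\varphi(g)=\alpha g$; then both $\varphi$ and $g\mapsto g+\varphi(g)=(1+\alpha)g$ are invertible $\mathbb{F}_2$-linear maps, which gives the claim in two lines. The paper instead argues by induction: it writes down explicit bijections for the base cases $r=2$ and $r=3$, and passes from $r$ to $r+2$ by splitting $C_2^{r+2}=G'\oplus\langle e_1,e_2\rangle$ and taking the direct sum $\varphi_2\oplus\varphi_{G'}$ of bijections on the two summands. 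Your argument is shorter and more uniform, and it makes transparent why $r=1$ must be excluded. What the paper's longer proof buys is not the lemma itself but its reuse downstream: the proof of Lemma \ref{e2g_lem_67} explicitly invokes ``the bijection $\varphi$ given in the proof of Lemma \ref{e2g_lem_full} for $r=3$'' and manipulates the particular length-three blocks $(e_1+h)(e_2+\varphi(h))(e_1+e_2+h+\varphi(h))$ it produces, so the concrete form of the base-case bijection (and its compatibility with a chosen direct-sum decomposition) is part of what gets used later. If you substitute your construction, you would either need to verify that the field-multiplication bijection for $r=3$ supports the same explicit computations in Lemma \ref{e2g_lem_67}, or retain an explicit $r=3$ bijection there. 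Your closing observation---that all three coordinates of the triples sweep out the whole group, which is what allows the packing into length-three minimal zero-sum sequences in Proposition \ref{e2g_prop_maxfull}---is exactly the right feature to emphasize and holds for your construction as well.
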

\begin{proof}
We prove the result by induction on $r$. Yet, since assuming the result holds for $r$, we can only establish that it holds for $r+2$, we first need to establish it for $2$ and $3$.
Let $r=2$. A bijection with unique fixed point $0$ has this property.
Let $r=3$. Let $f_1,f_2,f_3$ be independent. The bijection defined via $0\mapsto 0$, $f_1\mapsto f_1+f_2$, $f_2\mapsto f_2+f_3$, $f_3\mapsto f_1$,
$f_1+f_2\mapsto f_1+f_3$, $f_1+f_3\mapsto f_2$, $f_2+f_3\mapsto f_1+f_2+f_3$, and $f_1+f_2+f_3\mapsto f_3$ has this property.

Assume the claim holds for some $r \ge 2$. We consider $C_2^{r+2}$. Let $G'$ be a subgroup of index $4$ and let $e_1,e_2\in C_2^{r+2}$ such that $C_2^{r+2}= G' \oplus \langle e_1,e_2\rangle $. Let $\varphi_{G'}:G' \to G'$ a bijection such that $\{ g + \varphi_{G'}(g) \colon g \in G' \} = G'$, which exists by assumption, and let $\varphi_2: \langle e_1,e_2 \rangle \to \langle e_1,e_2 \rangle$ a bijection such that $\{ g + \varphi_2(g) \colon g \in \langle e_1, e_2\rangle \} = \langle e_1, e_2\rangle$, which exists by the case ``$r=2$''.
Let $\varphi = \varphi_2\oplus \varphi_{G'}$, i.e., $\varphi(g)=\varphi_2(e)+\varphi_{G'}(h)$ where $g=e+h$ with $e\in \langle e_1,e_2 \rangle$ and $h \in G'$.
Then, $\varphi$ is bijective and has the required property.
\end{proof}

\begin{proof}[Proof of Proposition \ref{e2g_prop_maxfull}]
By Lemma \ref{e2g_lem_lb} $\max \mathsf{L}(B) \le (2^r - 1)/3$.
Thus, it suffices to show that $B$ has some factorization of length at least $\lfloor (2^r - 1)/3 \rfloor $.
We proceed by induction.
For $r=2$ the claim is trivial, and for $r=3$ it is obvious, since $B$ is not a minimal zero-sum sequence.

Assume the claim holds for some $r \ge 2$. We consider the problem for $C_2^{r+2}$.
Let $G'$ be a subgroup of index $4$ and let $e_1,e_2\in C_2^{r+2}$ such that $C_2^{r+2}=G'\oplus \langle e_1,e_2\rangle$.
Moreover, let $\varphi: G' \to G'$ be a bijection such that $\{ g + \varphi_{G'}(g) \colon g \in G' \} = G'$, which exists by Lemma \ref{e2g_lem_full}.
Let $C=\prod_{g \in C_2^{r+2}\setminus G'}g$ and $D=\prod_{g \in G'\setminus \{0\}}g$. We note that $C$ and $D$ are zero-sum sequences and  $B=CD$.
For $h \in G'$ let $A_h= (e_1+h)(e_2+ \varphi(h))(e_1+e_2+ (h + \varphi(h)))$; we note that $A_h$ is a minimal zero-sum sequence.
Moreover, $\prod_{h \in G'}A_h= C$. Thus $2^r=|G'|\in \mathsf{L}(C)$.
And, by assumption $\max \mathsf{L}(D)= \lfloor (2^{r}-1)/3 \rfloor$. Thus, $2^r+ \lfloor (2^{r}-1)/3 \rfloor \in \mathsf{L}(B)$ and the claim is established.
\end{proof}

\subsection{Main results}
\label{e2g_subs_main}

In this section we state and prove our results on $\mathsf{D}_k(C_2^r)$.
We start by establishing an upper bound for $\mathsf{D}_2(C_2^r)$.

\begin{theorem}
\label{e2g_thm_ub2}
Let $r \in \mathbb{N}$. Then $\mathsf{D}_2(C_2^r) < (3r + 6)/2$.
\end{theorem}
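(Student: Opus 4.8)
The plan is to work with the reformulation $\mathsf{D}_2(C_2^r)=\max\{|B|\colon B\in\mathcal{B}(C_2^r),\ \max\mathsf{L}(B)\le 2\}$ and to reduce at once to the squarefree case. If $B$ realizes this maximum, write $B=B'T^2$ with $B'$ squarefree; by Lemma \ref{rlb_lem_lb2} (applied repeatedly, using $-g=g$ in $C_2^r$) one has $\max\mathsf{L}(B)=\max\mathsf{L}(B')+|T|$, so $\max\mathsf{L}(B')+|T|\le 2$. If $|T|\ge 1$ then $\max\mathsf{L}(B')\le 1$, so $B'$ is a single minimal zero-sum sequence (length $\le r+1$) or empty, whence $|B|=|B'|+2|T|\le r+3<(3r+6)/2$ for $r\ge 1$. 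Thus the extremal $B$ may be assumed squarefree, and $0\nmid B$ again by Lemma \ref{rlb_lem_lb2}. Then $B=A_1A_2$ with $A_1,A_2$ minimal zero-sum sequences whose supports are \emph{disjoint} (as $B$ is squarefree); recalling the structure of minimal zero-sum sequences over $C_2^r$, each $A_i$ has the form $(e_1+\dots+e_{s_i})\prod e_j$ with independent $e_j$, so $\supp(A_i)$ spans a subspace $V_i$ of dimension $s_i$ and $|A_i|=s_i+1$. Writing $s_1\le s_2\le r$, we have $|B|=s_1+s_2+2$, and the target $|B|<(3r+6)/2$ is equivalent to $s_1+s_2<(3r+2)/2$.

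The cleanest route is through Proposition \ref{ub_prop_bas}.2 with $k=1$, which gives $\mathsf{D}_2(C_2^r)\le \mathsf{D}_1(C_2^r)+M=(r+1)+M$, where $M$ is the minimal length of a minimal zero-sum sequence dividing an extremal $B$. A short computation shows $(r+1)+M<(3r+6)/2$ is equivalent to $M<(r+4)/2$, so it suffices to prove that every extremal $B$ as above contains a minimal zero-sum subsequence of length strictly less than $(r+4)/2$. If $\min(|A_1|,|A_2|)=s_1+1<(r+4)/2$ we are immediately done. Hence we may assume $s_1,s_2\ge (r+2)/2$, and then, since $\dim(V_1+V_2)\le r$, the intersection $W=V_1\cap V_2$ is large:
\[
w:=\dim W\ge s_1+s_2-r\ge \tfrac{r+2}{2}.
\]
So the remaining, and genuinely hard, case is that both circuits are long and their spans overlap in a subspace of dimension at least about $r/2$.

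In this case I would show that the assumption $\max\mathsf{L}(B)=2$ is untenable, by producing a partition of $\supp(B)$ into three non-empty zero-sum sets (each is then a product of minimal zero-sum sequences, forcing $\max\mathsf{L}(B)\ge 3$). The mechanism is that any $v\in W\setminus\{0\}$ is a subset sum of $\supp(A_1)$ and a subset sum of $\supp(A_2)$, so a pair of subsets $S_i\subseteq\supp(A_i)$ with $\sigma(S_1)=\sigma(S_2)=v$ yields a \emph{crossing} zero-sum set $S_1S_2$; the goal is to find two disjoint crossing zero-sum sets $R_1,R_2$ with $R_1R_2\neq B$, so that $R_3=(R_1R_2)^{-1}B$ is a non-empty third part. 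Concretely one picks independent $v_1,v_2\in W$ and seeks simultaneous partitions $\supp(A_1)=P_1\sqcup P_2\sqcup P_3$ and $\supp(A_2)=Q_1\sqcup Q_2\sqcup Q_3$ into non-empty parts with $\sigma(P_i)=\sigma(Q_i)\in W$; here the key structural fact is that a \emph{proper} subset of a minimal zero-sum sequence over $C_2^r$ is linearly independent, so within any such subset prescribed sums are realized uniquely, which lets one control the sizes of the parts and keep the leftover part non-empty.

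The main obstacle is exactly this last step: guaranteeing a valid simultaneous three-part partition, i.e.\ realizing three prescribed values of $W$ (summing to $0$) as partition-sums of \emph{both} circuits at once, while respecting the disjointness and non-emptiness of all parts. A naive choice (assigning basis elements according to the $W$-coordinates of $v_1,v_2$) only works when the coordinate supports are disjoint in both bases, which a worst-case $V_1$ can obstruct; so the argument must use the full freedom of choosing $v_1,v_2$ inside the large subspace $W$ and must track how $\dim W\ge s_1+s_2-r$ feeds into the existence of such a configuration. Calibrating this construction so that it succeeds precisely once $s_1+s_2\ge (3r+2)/2$ is what produces the exact constant $(3r+6)/2$, and the small values of $r$ (where $W$ is too small for the dimension count) would be checked directly.
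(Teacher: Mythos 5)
Your setup is sound and closely parallels the paper's: the reduction to a squarefree extremal $B$ with $0\nmid B$, the decomposition $B=A_1A_2$ into two minimal zero-sum sequences with disjoint supports, and the observation (via Proposition \ref{ub_prop_bas}.2 with $k=1$ and $\mathsf{D}_1(C_2^r)=r+1$) that it suffices to produce a non-empty zero-sum subsequence of length less than $(r+4)/2$ are all correct. But the proof has a genuine gap exactly where you say it does: in the remaining case $s_1,s_2\ge (r+2)/2$ you only describe a \emph{strategy} (find two disjoint ``crossing'' zero-sum sets $R_1,R_2$ with non-empty complement, forcing $\max\mathsf{L}(B)\ge 3$), and you explicitly concede that the simultaneous three-part partition of both supports with matching partial sums in $W=V_1\cap V_2$ is not constructed, that the naive coordinate-based choice can fail, and that the calibration yielding the constant $(3r+6)/2$ is unresolved. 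Since this case is not an edge case but the entire content of the theorem --- under the contradiction hypothesis $|B|\ge(3r+6)/2$ the shortest atom dividing $B$ automatically has length at least $|B|-(r+1)\ge(r+4)/2$, so one is \emph{always} in your ``hard case'' --- the argument as written does not prove the statement.

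For comparison, the paper resolves this case not by exhibiting three disjoint zero-sum parts but by directly constructing a single non-empty zero-sum subsequence shorter than the shortest atom $A$, which is a contradiction. Writing $A'=A^{-1}B$ (also an atom, so $|A|\ge|B|-r-1$), it splits into three cases according to whether $|A|\ge|B|-r+1$, $|A|=|B|-r$, or $|A|=|B|-r-1$. In each case it takes an independent set underlying $\supp(A')$, extends it if necessary by an element of $\supp(A)$, picks a minimal-length subsequence $L$ of $A$ whose sum lies in the span (of length at most $1+(r-|A'|)$ by a Davenport-constant bound in the quotient), and then forms two complementary zero-sum subsequences $T$ and $S$; the length count shows $|S|<|A|$ precisely when $|B|\ge(3r+6)/2$. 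You might try to salvage your approach along these lines: rather than aiming for a three-part partition, use one element $v\in W$ realized economically on one side and its complement on the other to build a single short zero-sum subsequence. As it stands, though, the decisive step is missing.
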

\begin{proof}
We proceed by contradiction. Let $B$ be a zero-sum sequence over $C_2^r$ with
$\max\mathsf{L}(B)=2$ and $|B|=\mathsf{D}_2(C_2^r)$ and suppose $|B|\ge (3r+6)/2$. By \eqref{elb_eq_1} we have $\langle \supp(B)\rangle = C_2^r$, and by Proposition \ref{ub_prop_bas} we get  $0\nmid B$, and $B$ is squarefree. In particular, we get that $r\ge 3$. Let $A \in \mathcal{A}(C_2^r)$ with $A\mid B$, and we assume that $|A|$ is minimal among the lengths of all minimal zero-sum sequences that divide $B$.
Since $A' =A^{-1}B$ is a minimal zero-sum sequence as well, we get that $|A| \ge |B|- \mathsf{D}(C_2^r)=|B| - r-1$.

We consider three cases.

\noindent
\textbf{Case 1:}
Suppose that $|A|\ge |B| - r +1$. Since $|A'|< r + 1$  and thus $\mathsf{r}(\langle \supp(A')\rangle)< r$, there exist independent elements $e_1, \dots, e_{|A'|}$ such that
$A'=f \prod_{i=1}^{|A'|-1} e_i$ and $A= e_{|A'|}\prod_{j=1}^{|A|-1}f_j$.
Let $L \mid \prod_{j=1}^{|A|-1}f_j$ be a non-empty subsequence of minimal length such that
$\sigma(L) \in \langle e_1, \dots, e_{|A'|} \rangle$. Note that since $\sigma ( \prod_{j=1}^{|A|-1}f_j)=e_{|A|'}$ such a sequence exists.
Moreover, we note that $|L| \le \mathsf{D}(C_2^r / \langle e_1, \dots, e_{|A'|} \rangle ) = 1 + (r - |A'|)$.
Let $J \subset [1, |A'|]$ such that $\sigma(L)= \sum_{j \in J}e_j$.
Then
\[
T = L  \prod_{j \in J } e_{j}
\]
is a zero-sum subsequence of $B$ with
\(
| T | = | L | + | J  | \ge |A|.
\)
If $ |A'|\notin J$, then
\[
S_1 = L f \prod_{i \in [1,|A'|-1]\setminus J } e_{i}
\]
is a non-empty zero-sum subsequence of $B$ with
\[
| S_1 | = |L| + 1+ |A'| - 1 - |J| \le    1 + (r - |A'|) + |A'|  - |J| = r - |J|+1.
\]
Since $ | J  | \ge |A| - | L | \ge |A| - ( 1 + (r - |A'|))=|B|  - r-1$, we get
$|S_1|\le 2r+2-|B|$.
Yet, since by assumption $|B| \ge (3r +6)/2$, this implies $|S_1| \le |B|-r$, a contradiction to the minimality of $|A|$.

If $ |A'|\in J$, then
\[
S_2 = L f e_{|A'|}\prod_{i \in [1,|A'|]\setminus J } e_{i}
\]
is a non-empty zero-sum subsequence of $B$ with
\[
| S_2 | = |L| + 2 + |A'|  - |J| \le    1 + (r - |A'|) + 2 + |A'| - |J| = r + 3 - |J|.
\]
As above, we get $|S_2|\le 2r+4-|B|$, which also implies $|S_2| \le |B|-r$, again a contradiction.

\noindent\textbf{Case 2:} Suppose that $|A|=|B| - r$. As above, we may then write
\[
B = \left( f \prod_{i=1}^{r-1} e_{i} \right) \left( e_{r} \prod_{j=1}^{|A|-1} f_j  \right).
\]
with $e_1, \dots, e_r$ independent.
Since $\sigma ( \prod_{j=1}^{|A|-1} f_j  )= e_r$, there exists some $f_j$, say $f_1$, such that for
 $J \subset [1,r]$ with $f_1= \sum_{j \in J} e_j$ we have $r \in J$.

Then
\(
T = f_{1}  \prod_{j \in J } e_j
\)
is a non-empty zero-sum subsequence of $B$ with
\(
| T | = 1 + | J  | \ge |A|
\)
and
\[
S =  f_{1} f e_r\prod_{i \in [1,r]\setminus J} e_{i}
\]
is also a non-empty zero-sum subsequence of $B$ with
\[
| S | = 3 + r - |J| \le r + 4 - |A| = r + 4 - (|B| - r)   \le |B| - r - 1,
\]
the last inequality since
\(
|B| \ge  (3r+6)/2,
\)
Again, this contradicts the minimality of $|A|$.

\noindent\textbf{Case 3:} Suppose that $|A| = |B| - r - 1$.  We may then write
\[
B = \left( f \prod_{i=1}^{r} e_{i} \right) \left( \prod_{j=1}^{|A|} f_j  \right)
\]
with $e_1, \dots, e_r$ independent. Again, let  $J \subset [1,r]$ with $f_1= \sum_{j \in J} e_j$,
Now we have that
\(
T = f_{1}   \prod_{j \in J } e_j
\)
is a non-empty zero-sum subsequence of $|B|$ with
\(
| T | = 1 + | J  | \ge |A|
\)
and
\(
S =  f_{1} f  \prod_{i \in [1,r]\setminus J} e_{i}
\)
is also a zero-sum subsequence of $B$ with
\[
| S |=r+2-|J| \le  r + 3 - |A| = r + 3 - (|B| - r - 1) \le |B| - r - 2,
\]
the last inequality  since
\(
|B| \ge (3r+6)/2,
\)
a contradiction.
\end{proof}

Combining this upper bound with the lower bound established in Theorem \ref{elb_thm}, we obtain the precise value of $\mathsf{D}_2(C_2^r)$ for $r=4$ and $r=6$, namely $8$ and $11$, resp. For $r=2$ and $r=3$ the bounds also yield the exact value (cf.~Remark \ref{as_rem}).
For $r=5$, the lower and upper bounds do not coincide, they are $9$ and $10$, resp., yet below we will show that equality holds at the upper bound.
For larger $r$ our bounds are far apart, yet for sufficiently large $r$, better bounds can be obtained using results from 
coding theory, namely $1.26 r \le \mathsf{D}_2(C_2^r) \le 1.40 r$ (cf.~\cite{cohenzemor} and see \cite{plagneschmid} for recent additional investigations in this direction).

We turn to the investigation of $\mathsf{D}_k(C_2^r)$ for larger $k$. We start by investigating $\mathsf{D}_k(C_2^4)$ and $\mathsf{D}_k(C_2^5)$. We determine the exact value for each $k$. For $C_2^4$, the result can be found partially and implicitly in the work of P.C.~Baayen \cite{baayen}. 

\begin{theorem}
\label{e2g_thm_4}
$\mathsf{D}_0(C_2^4)=5$ and $k_{\mathsf{D}}(C_2^4)=3$. Additionally, $\mathsf{D}_{1}(C_2^4)=5$ and $\mathsf{D}_{2}(C_2^4)=8$.
\end{theorem}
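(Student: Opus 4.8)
The plan is to separate the four claims into the two small values $\mathsf{D}_1(C_2^4)=5$ and $\mathsf{D}_2(C_2^4)=8$, and the eventual behaviour recorded in $\mathsf{D}_0(C_2^4)=5$ and $k_{\mathsf{D}}(C_2^4)=3$. The value $\mathsf{D}_1(C_2^4)=\mathsf{D}(C_2^4)=r+1=5$ is the classical Davenport constant of an elementary $2$-group, and $\mathsf{D}_2(C_2^4)=8$ is the case $r=4$ already obtained after Theorem \ref{e2g_thm_ub2}: the upper bound $\mathsf{D}_2(C_2^4)<(3\cdot4+6)/2=9$ comes from Theorem \ref{e2g_thm_ub2}, and the matching lower bound $\mathsf{D}_2(C_2^4)\ge 8$ from Theorem \ref{elb_thm} with $s=3$ and $t=1$ (admissible since $s(s-1)/2=3\le r=4$ and $n_1=2$ is even). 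Thus the substance lies in the arithmetic progression for $k\ge 3$.

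To handle $k\ge 3$ I would reduce to squarefree sequences. By the ``moreover'' part of Lemma \ref{rlb_lem_lb2} and the decomposition $B=B'T^2$ exploited in the proof of Proposition \ref{e2g_prop_tech}, any zero-sum $B$ with $\max\mathsf{L}(B)\le k$ may be taken with $0\nmid B$ and satisfies $|B|=|B'|+2|T|$ and $\max\mathsf{L}(B)=\max\mathsf{L}(B')+|T|$, where $B'$ is squarefree. Writing $d_j$ for the largest length of a \emph{squarefree} zero-sum sequence over $C_2^4$ with $\max\mathsf{L}\le j$ (not to be confused with the distance $\mathsf{d}$), this decomposition together with its reverse (appending $k-j$ copies of $g^2$ for a fixed $g\neq 0$, which by Lemma \ref{rlb_lem_lb2} raises the max-length by $k-j$) gives the identity $\mathsf{D}_k(C_2^4)=\max_{0\le j\le k}\bigl(d_j+2(k-j)\bigr)$. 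Everything then reduces to determining the $d_j$.

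The $d_j$ I would compute by classifying squarefree zero-sum sequences according to length. Such a sequence is supported on a subset of the $15$ nonzero elements of $C_2^4$, and the omitted nonzero elements must sum to $0$; since two distinct nonzero elements of $C_2^4$ never sum to $0$, there is no squarefree zero-sum sequence of length $13$ or $14$, the unique one of length $15$ is the full sequence (with $\max\mathsf{L}=5$ by Proposition \ref{e2g_prop_maxfull}), and a length-$12$ one is precisely the complement of the nonzero part of an index-$4$ subgroup $G'$. The latter is the sequence denoted $C$ in the proof of Proposition \ref{e2g_prop_maxfull}, which factors into $|G'|=4$ atoms; combined with the bound $\max\mathsf{L}\le 12/3=4$ of Lemma \ref{e2g_lem_lb}.1 this shows every length-$12$ squarefree zero-sum sequence has $\max\mathsf{L}=4$, hence $\max\mathsf{L}>3$. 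On the constructive side, deleting a length-$4$ zero-sum subsequence such as $e_1e_2e_3(e_1+e_2+e_3)$ from the full sequence produces a squarefree zero-sum sequence of length $11$, whose max-length is forced to be at most $\lfloor 11/3\rfloor=3$ by Lemma \ref{e2g_lem_lb}.1. Together with $d_1\le\mathsf{D}_1=5$ and $d_2\le\mathsf{D}_2=8$, this yields $d_3=11$, $d_4\le 12$, and $d_j\le 15$ for all $j$, so that $d_j-2j\le 5$ for every $j$, with equality at $j=3$.

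Substituting into the identity gives $\mathsf{D}_k(C_2^4)=2k+\max_{0\le j\le k}(d_j-2j)$, which equals $2k+5$ precisely when $j=3$ is available, i.e. for all $k\ge 3$, while the small cases reproduce $\mathsf{D}_2(C_2^4)=8$ and $\mathsf{D}_1(C_2^4)=5$. Hence $\mathsf{D}_k(C_2^4)-2k=5$ for $k\ge 3$ but equals $4$ for $k=2$, which by Proposition \ref{e2g_prop_tech}.1 is exactly the assertion $\mathsf{D}_0(C_2^4)=5$ and $k_{\mathsf{D}}(C_2^4)=3$. I expect the genuine obstacle to be the rigidity statement $d_3=11$: one must exclude a squarefree zero-sum sequence of length $12$ having $\max\mathsf{L}\le 3$, and this is exactly where the precise description of the length-$12$ sequences as complements of index-$4$ subgroups, together with the explicit factorization furnished by Proposition \ref{e2g_prop_maxfull}, is indispensable; the non-existence of lengths $13$ and $14$ is the second point where the special additive structure of $C_2^r$ enters.
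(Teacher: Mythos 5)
Your proof is correct, but it takes a genuinely different route from the paper's for the heart of the matter, namely $k\ge 3$. The paper proceeds value by value: it gets $\mathsf{D}_3(C_2^4)\le 11$ and $\mathsf{D}_4(C_2^4)\le 14$ from Proposition \ref{ub_prop_bas}.3 together with $\mathsf{s}_{\le 3}(C_2^4)=9$ (quoted from Theorem \ref{e2g_thm_l3ex}), excludes length $14$ at $k=4$ by an ad hoc complement argument, and then switches to $\mathsf{s}_{\le 2}=\eta(C_2^4)=16$ for $k\ge 5$; the lower bounds come from the same length-$11$ squarefree example and Lemma \ref{rlb_lem_lb2}. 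You instead push the squarefree reduction of Lemma \ref{rlb_lem_lb2} and Proposition \ref{e2g_prop_tech} all the way to the closed-form identity $\mathsf{D}_k=\max_{0\le j\le k}(d_j+2(k-j))$, and then determine the $d_j$ by classifying long squarefree zero-sum sequences via their complements in $C_2^4\setminus\{0\}$: lengths $13$ and $14$ are impossible, length $15$ has $\max\mathsf{L}=5$, and length $12$ is rigid (complement of an index-$4$ subgroup, $\max\mathsf{L}=4$ by Proposition \ref{e2g_prop_maxfull} and Lemma \ref{e2g_lem_lb}), forcing $d_3=11$ and $d_j-2j\le 5$ throughout. This treats all $k\ge 3$ uniformly and avoids Theorem \ref{e2g_thm_l3ex} altogether; the price is that the complement classification is special to the fact that $2^4-1=15$ leaves so little room, which is why the paper's $\mathsf{s}_{\le\ell}$-based scheme is the one that scales to $C_2^5$. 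Two small points to tidy up: your identity needs $j=k-|T|\ge 0$, which holds since $\max\mathsf{L}(B)=\max\mathsf{L}(B')+|T|\ge|T|$; and your classification of the sequences realizing $d_j$ implicitly assumes $0\nmid B'$, which is legitimate because the extremal $B$ satisfies $0\nmid B$ by Lemma \ref{rlb_lem_lb2} and hence so does its squarefree part (alternatively, a squarefree witness containing $0$ of length $12$ would force a squarefree zero-sum sequence of length $11$ with $\max\mathsf{L}\le 2$, contradicting $\mathsf{D}_2(C_2^4)=8$).
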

\begin{proof}
For $k=1$ the statement is well-known and for $k=2$ see the remark
after Theorem \ref{e2g_thm_ub2}.
For $k=3$, we note that by Proposition \ref{ub_prop_bas}.3 with $\ell=3$ and Theorem \ref{e2g_thm_l3ex}, we have $\mathsf{D}_{3}(C_2^4)\le \max \{ 8 + 3, 9 - 1 \}=11$.
It remains to show that $\mathsf{D}_{3}(C_2^4)\ge 11$.
If there exists some squarefree $B\in \mathcal{B}(C_2^4)$ with $0\nmid B$ and $|B|=11$, then by Lemma \ref{e2g_lem_lb}
$\max \mathsf{L}(B)\le 11/3 < 4$, that is $B \in \mathcal{M}_3(C_2^4)$, establishing the claim.
Yet, since there exists some squarefree $C \in \mathcal{B}(C_2^4)$ with $0\mid C$ and $|C|=5$, the squarefree sequence with support $C_2^4\setminus \supp(C)$ has this property.

Now, for $k \ge 4$, we get by Lemma \ref{rlb_lem_lb2} $\mathsf{D}_k(C_2^4)\ge 11 + (k-3)2$.
It thus remains to show that $\mathsf{D}_{k}(C_2^4) \le 5 + 2k$ for $k \ge 4$.
First, we consider $k=4$. Again, by Proposition \ref{ub_prop_bas}.3 we get $\mathsf{D}_{k}(C_2^4) \le 11 + 3$.
Suppose, there exists some $B \in \mathcal{M}_4(C_2^4)$ with $|B|=14$.
We note that each squarefree sequence over $C_2^4$ of length $14$ is not a zero-sum sequence, since its sum is equal to the sum of the two elements that it does not contain.
Thus, $B$ has a zero-sum subsequence of length at most $2$, by Proposition \ref{ub_prop_bas}.3, a contradiction to $\mathsf{D}_{3}(C_2^4)=11$.
Consequently, $\mathsf{D}_4(C_2^4)=13$.
For $k \ge 5$, the claim follows by Proposition \ref{ub_prop_bas}.3, since $\mathsf{s}_{\le 2}(C_2^4)=\eta(C_2^4)=16$.
\end{proof}

\begin{theorem}
\label{e2g_thm_5}
$\mathsf{D}_0(C_2^5)=11$ and $k_{\mathsf{D}}(C_2^5)=10$. Additionally, $\mathsf{D}_{1}(C_2^5)=6$, $\mathsf{D}_{2}(C_2^5)=10$,
$\mathsf{D}_{3}(C_2^5)=13$, $\mathsf{D}_{4}(C_2^5)=16$, $\mathsf{D}_{5}(C_2^5)=19$, $\mathsf{D}_{6}(C_2^5)=21$, $\mathsf{D}_{7}(C_2^5)=23$,
$\mathsf{D}_{8}(C_2^5)=26$, and $\mathsf{D}_{9}(C_2^5)=28$.
\end{theorem}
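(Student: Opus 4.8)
The plan is to compute the nine listed values $\mathsf{D}_1(C_2^5),\dots,\mathsf{D}_9(C_2^5)$ together with $\mathsf{D}_{10}(C_2^5)=31$ one after another in increasing order of $k$, and then to read off $\mathsf{D}_0(C_2^5)$ and $k_{\mathsf{D}}(C_2^5)$ at the end. The organizing device is the reduction implicit in Lemma \ref{rlb_lem_lb2} and Proposition \ref{e2g_prop_tech}: if $B$ is extremal for $\mathsf{D}_k$ (so $\max\mathsf{L}(B)\le k$ and $|B|=\mathsf{D}_k(C_2^5)$) and $0\mid B$ or $g^2\mid B$, then deleting $0$ or the atom $g^2=(-g)g$ drops $\max\mathsf{L}$ by one and the length by one or two, so once $\mathsf{D}_{k-1}$ is already known one may assume the extremal $B$ is \emph{squarefree} with $0\nmid B$. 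Over $C_2^5$ the atoms then have length in $[3,6]$, which is what makes the structural results of Subsection \ref{e2g_subs_aux} applicable. Once all the values are in hand, $\mathsf{D}_0(C_2^5)=\max_k(\mathsf{D}_k(C_2^5)-2k)$ and $k_{\mathsf{D}}(C_2^5)=\min\{k\colon \mathsf{D}_k(C_2^5)-2k=\mathsf{D}_0(C_2^5)\}$ follow from Proposition \ref{e2g_prop_tech}.1; since the numbers $\mathsf{D}_k-2k$ run $4,6,7,8,9,9,9,10,10,11$ for $k=1,\dots,10$ and are constant equal to $11$ thereafter, this yields $\mathsf{D}_0(C_2^5)=11$ and $k_{\mathsf{D}}(C_2^5)=10$. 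The tail $k\ge 10$ is handled uniformly: from $\mathsf{D}_{10}=31$ and Proposition \ref{ub_prop_bas}.3 with $\ell=2$ (using $\mathsf{s}_{\le 2}(C_2^5)=\eta(C_2^5)=32$) one gets $\mathsf{D}_{k+1}\le\mathsf{D}_k+2$ as soon as $\mathsf{D}_k\ge 29$, which combined with the lower bound $\mathsf{D}_{k+1}\ge\mathsf{D}_k+2$ of Lemma \ref{rlb_lem_lb2} gives $\mathsf{D}_k=2k+11$ for all $k\ge 10$.

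For the \emph{lower bounds} I exhibit squarefree zero-sum sequences of the asserted length whose maximal factorization length is small. For large $k$ this is cheap: by Lemma \ref{e2g_lem_lb}.1 any squarefree zero-sum $B$ with $0\nmid B$ satisfies $\max\mathsf{L}(B)\le\lfloor |B|/3\rfloor$, so starting from the full sequence on $C_2^5\setminus\{0\}$ (length $31$, with $\max\mathsf{L}=\lfloor 31/3\rfloor=10$ by Proposition \ref{e2g_prop_maxfull}) and deleting a squarefree zero-sum subset of size $3$, $5$, $8$ produces sequences of lengths $28,26,23$ lying in $\mathcal{M}_9,\mathcal{M}_8,\mathcal{M}_7$ respectively, giving $\mathsf{D}_9\ge 28$, $\mathsf{D}_8\ge 26$, $\mathsf{D}_7\ge 23$, and the full sequence itself gives $\mathsf{D}_{10}\ge 31$. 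For the small $k$ the bound $\lfloor |B|/3\rfloor$ is far too weak and the construction must actively \emph{suppress} fine factorizations: the support has to be chosen with very few short zero-sums (recall from Lemma \ref{e2g_lem_l3} that triangles and Sidon-quadruples are exactly the length-$3$ and length-$4$ obstructions), so that $B$ splits into only a couple of long atoms and admits no packing into more disjoint zero-sums. Theorem \ref{elb_thm} already supplies a baseline $\mathsf{D}_k\ge 2k+5$, and the remaining small-$k$ lower bounds are obtained by explicit tailored sequences built from length-$5$ and length-$6$ atoms on carefully overlapping supports.

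For the \emph{upper bounds} I combine three tools. First, the recursion $\mathsf{D}_{k+1}\le\max\{\mathsf{D}_k+3,\ \mathsf{s}_{\le 3}(C_2^5)-1\}=\max\{\mathsf{D}_k+3,16\}$ from Proposition \ref{ub_prop_bas}.3 with $\ell=3$ (using $\mathsf{s}_{\le 3}(C_2^5)=17$ from Theorem \ref{e2g_thm_l3ex}); this is already sharp for $k=4,5,8$ given the preceding value, and with the $\ell=2$ tail argument it closes $k\ge 10$. Second, a parity/complement obstruction: for squarefree $B$ with $0\nmid B$ one has $\sigma(B)=\sum_{g\notin\supp(B)}g$, so no such $B$ of length $29$ or $30$ exists; together with the reduction (which, using $\mathsf{D}_8=26$, forces an extremal length-$29$ sequence to be squarefree and zero-free) this gives $\mathsf{D}_9\le 28$. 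Third, a triangle-peeling argument driven by Theorem \ref{e2g_thm_l3ex}: while a squarefree zero-sum remainder has length exceeding the coset bound $2^{r-1}=16$ it must contain a length-$3$ zero-sum (the competing possibility, support inside the non-zero coset of an index-$2$ subgroup, is excluded for the relevant lengths by the parity computation, and the exceptional Davydov--Tombak configuration is too short), so triangles can be stripped off until the remainder is short or sits in such a coset, where Lemma \ref{e2g_lem_lb}.2 applies. For $k=3$ this already gives $\mathsf{D}_3\le 13$: a putative length-$14$ example yields two triangles plus a length-$8$ remainder, which cannot be a single atom (atoms have length $\le 6$), forcing $\max\mathsf{L}\ge 4$.

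The hard part will be the tight upper bounds for the intermediate values, most acutely $\mathsf{D}_6\le 21$ and $\mathsf{D}_7\le 23$, where the recursion overshoots by one and naive peeling stalls. Concretely, a length-$22$ sequence made of a full coset $e+G'$ (which has $\max\mathsf{L}=4$, splitting into four length-$4$ atoms from a $2$-dimensional subgroup and no more by Lemma \ref{e2g_lem_lb}.2) together with two external triangles only \emph{visibly} factors into $2+4=6$ atoms, which would spuriously yield $\mathsf{D}_6\ge 22$. The theorem therefore forces me to prove that every length-$22$ (respectively length-$24$) squarefree zero-sum sequence admits a \emph{finer} factorization into $7$ (respectively $8$) atoms than the obvious one, by recombining coset elements with the external elements into mixed triangles $\{\,e+h_1,\ e+h_2,\ h_1+h_2\,\}$; producing enough pairwise disjoint such triangles is a system-of-distinct-representatives problem whose solvability rests on the abundance of representations $g=h_1+h_2$ inside $G'\cong C_2^4$. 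Organizing Theorem \ref{e2g_thm_l3ex} into a finite, exhaustively checkable list of near-extremal configurations and verifying the re-decomposition in each is, I expect, the technical core of the proof; the matching delicate small-$k$ lower-bound constructions are its companion difficulty.
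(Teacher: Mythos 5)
Your overall architecture matches the paper's: induction on $k$, the $\mathsf{s}_{\le\ell}$ recursion from Proposition \ref{ub_prop_bas}.3 with $\ell=3$ and Theorem \ref{e2g_thm_l3ex}, the complement/parity obstruction to dispose of length $29$, the $\eta$-based tail for $k\ge 10$, and reading off $\mathsf{D}_0$ and $k_{\mathsf{D}}$ from the table of $\mathsf{D}_k-2k$ via Proposition \ref{e2g_prop_tech}. Your identification of $\mathsf{D}_6\le 21$ and $\mathsf{D}_7\le 23$ as the crux is also exactly right. But two essential pieces are missing. First, the lower bounds $\mathsf{D}_2\ge 10$, $\mathsf{D}_3\ge 13$, $\mathsf{D}_4\ge 16$, $\mathsf{D}_5\ge 19$ are never actually established: Theorem \ref{elb_thm} only yields $2k+5$, which is four short at $k=5$, and ``tailored sequences built from length-$5$ and length-$6$ atoms'' points in the wrong direction. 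The correct extremal examples are coset-based: take a squarefree subset of the non-zero coset of an index-$2$ subgroup together with $k-2$ exceptional elements outside it; every atom supported inside the coset has even length $\ge 4$, so a length-$(3k+4)$ sequence of this shape admits no factorization of length greater than $k$. Since $\mathsf{D}_5\ge 19$ is also the input to $\mathsf{D}_6\ge 21$ via Lemma \ref{rlb_lem_lb2}, this gap propagates upward.

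Second, you explicitly defer the proofs of $\mathsf{D}_6\le 21$ and $\mathsf{D}_7\le 23$ (``is, I expect, the technical core''), and the route you sketch (a system-of-distinct-representatives search over near-extremal configurations) is not what closes them in the paper. The paper first proves a full structural classification (Proposition \ref{e2g_prop_small}) of the extremal sequences in $\mathcal{A}_k(C_2^5)$ for $k\in[2,5]$ --- squarefree, zero-free, length $3k+4$, with all but $k-2$ support elements in a non-zero index-$2$ coset --- and uses it twice for $\mathsf{D}_6$: a putative length-$22$ example must, after removing one triangle, be an extremal element of $\mathcal{M}_5$, hence have the coset shape; peeling three further triangles through the exceptional elements leaves a length-$13$ sequence whose support splits $3+10$ across $G'$ and $e+G'$ and therefore, by the $k=3$ classification, cannot lie in $\mathcal{M}_3$. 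For $\mathsf{D}_7$ the paper passes to the $7$-element complement $B'=A_3A_4$ of a putative length-$24$ example and upgrades Proposition \ref{e2g_prop_maxfull} (via the explicit bijection of Lemma \ref{e2g_lem_full}) to produce a length-$10$ factorization of the full sequence on $C_2^5\setminus\{0\}$ divisible by $A_3\cdot A_4$, whence $B$ itself factors into $8$ atoms. Without these arguments, or a worked-out substitute for your recombination step, the two hardest values --- and hence $k_{\mathsf{D}}(C_2^5)=10$ and $\mathsf{D}_0(C_2^5)=11$ --- remain unproved.
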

We note that the sequence $(\mathsf{D}_{k+1}(C_2^5) - \mathsf{D}_{k}(C_2^5))_{k \in \mathbb{N}}$ is not non-increasing.
Moreover, we point out that just to determine $\mathsf{D}_0(C_2^5)$ and $k_{\mathsf{D}}(C_2^5)$, would be considerably less effort than to determine all values of $\mathsf{D}_k(C_2^5)$; this fact is blurred by the way we prove this result (to do it differently would add a lot of redundancy), yet in Remark \ref{e2g_rem_thm5} we discuss this in more detail.

For clarity of exposition, we break up the proof of Theorem \ref{e2g_thm_5} into auxiliary results.
In the following result we determine the structure of sequences of maximal length in $\mathcal{A}_{k}(C_2^5)$ for $k \in [2, 5]$, and in particular determine $\mathsf{D}_k(C_2^5)$ for $k \in [2,5]$. The information on the structure of these sequences is used to determine $\mathsf{D}_6(C_2^5)$.

\begin{proposition}
\label{e2g_prop_small}
Let $B \in \mathcal{B}(C_2^5)$ and let $k \in [2,5]$.
Then, $B\in \mathcal{A}_k(C_2^5)$ with $|B|= \mathsf{D}_{k}(C_2^5)$ if and only if $|B|=4+3k$, $B$ is squarefree, $0\nmid B$, and there exist $g_1\dots g_{k-2}\mid B$ such that $\supp((g_1\dots g_{k-2})^{-1}B)$ is contained in the non-zero coset of a subgroup of index $2$.
\end{proposition}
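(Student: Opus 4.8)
The plan is to establish the values $\mathsf{D}_k(C_2^5)=3k+4$ and the structural description together, by induction on $k\in[2,5]$, working throughout with squarefree $B$ satisfying $0\nmid B$. This reduction is legitimate: by Lemma \ref{rlb_lem_lb2} a length-maximal $B\in\mathcal{A}_k(C_2^5)$ has $0\nmid B$, and if $B=B'T^2$ with $B'$ squarefree then $\max\mathsf{L}(B)=\max\mathsf{L}(B')+|T|$ (again Lemma \ref{rlb_lem_lb2}) gives $|B'|=3k+4-2|T|$ and $B'\in\mathcal{M}_{k-|T|}(C_2^5)$, whence $3k+4-2|T|\le\mathsf{D}_{k-|T|}(C_2^5)=3(k-|T|)+4$ and so $|T|=0$, once the values for indices below $k$ are known. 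The base case $k=2$ gets its upper bound $\mathsf{D}_2(C_2^5)\le 10$ from Theorem \ref{e2g_thm_ub2}.

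For the \emph{if} direction, and with it the lower bound $\mathsf{D}_k(C_2^5)\ge 3k+4$, I would argue by counting. Let $\phi\colon C_2^5\to C_2^5/H\cong C_2$ be the quotient by the index-$2$ subgroup $H$; all terms of $C=(g_1\cdots g_{k-2})^{-1}B$ lie in $\phi^{-1}(1)=e+H$, so the only terms of $B$ outside $e+H$ are among the $g_i$, say $a$ of them, with $a\le k-2$. Every atom is a zero-sum sequence and hence meets $e+H$ in an even number of terms. Suppose a factorization of $B$ has length $t\ge k+1$; since each atom has length at least $3$ and the lengths sum to $3k+4$, necessarily $t=k+1$, with $k$ atoms of length $3$ and one of length $4$. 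Writing $x$ for the number of length-$3$ atoms having two terms in $e+H$ and $2c$ (with $c\in\{0,1,2\}$) for the number of terms of the length-$4$ atom in $e+H$, and counting the $a$ terms lying in $H$, one gets $2(x+c)=3k+4-a\ge 2k+6$, hence $x\ge k+3-c\ge k+1$, contradicting $x\le k$. Thus $\max\mathsf{L}(B)\le k$, so $B\in\mathcal{M}_k(C_2^5)$ and $\mathsf{D}_k(C_2^5)\ge 3k+4$; the equality $\max\mathsf{L}(B)=k$ then follows from $\mathsf{D}_{k'}(C_2^5)=3k'+4$ for $k'<k$.

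For the \emph{only if} direction I would peel minimal zero-sum sequences of length $3$: write $B=A_1\cdots A_jB_j$ with $|A_i|=3$ and $B_j$ having no non-empty zero-sum subsequence of length at most $3$. By Proposition \ref{ub_prop_bas}.1 each removal drops the maximal factorization length by exactly one (the intermediate sequences being length-maximal for their index), so $\max\mathsf{L}(B_j)=k-j$ while $|B_j|=3(k-j)+4$. As soon as $|B_j|\ge 9$, Theorem \ref{e2g_thm_l3ex} places $\supp(B_j)$ in the non-zero coset of an index-$2$ subgroup; the second, index-$16$ alternative can only intervene in the base case $k=2$ (where $|B|=10$ meets the threshold), and there it is excluded because the ten terms pair off as $v,\,v+w$ for the five representatives $v$, contributing $w$ each and summing to $5w=w\ne 0$, so that configuration is not a zero-sum sequence. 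A zero-sum sequence supported in a coset $e+H$ has even length, and by Lemma \ref{e2g_lem_lb}.2 its maximal factorization length is at most $|B_j|/4$; together with $k-j\ge 2$ these force $k-j\in\{2,4\}$, i.e.\ $|B_j|\in\{10,16\}$. It then remains to push the coset back across the peeled atoms and to check that at most $k-2$ terms of $B$ lie outside $e+H$, producing the required $g_1\cdots g_{k-2}$.

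The \emph{main obstacle} is precisely this last bookkeeping. A length-$3$ atom meets $H$ in one or three terms, and an atom lying entirely in $H$ contributes three out-of-coset terms; such atoms are compatible with the extremal count only when the coset is already full, which is exactly the case $k-j=4$, $k=5$ (all sixteen coset terms present, three terms left in $H$, matching $k-2=3$). For $k\in\{3,4\}$ I must rule out all-in-$H$ atoms, and here a genuine combinatorial input is needed: I would show that three terms $t_1,t_2,t_3\in H$ together with sufficiently many coset terms can be regrouped into three mixed atoms $t_i(e+h)(e+h')$ with $h+h'=t_i$, i.e.\ into disjoint Schur triples inside the coset, which raises the factorization length beyond $k$ and contradicts $B\in\mathcal{A}_k(C_2^5)$. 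Proving that enough pairwise disjoint such triples exist — a matching argument in $H\cong C_2^4$ exploiting that at most a few of the sixteen coset elements are missing — is the step I expect to require the most care; the same analysis also yields the upper bound $\mathsf{D}_k(C_2^5)\le 3k+4$ and thereby closes the induction.
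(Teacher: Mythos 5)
Your reduction to squarefree $B$ with $0\nmid B$, your parity count establishing the ``if'' direction and the lower bound $\mathsf{D}_k(C_2^5)\ge 3k+4$, and the peeling set-up for the ``only if'' direction are all sound and close in spirit to the paper. The proof is nevertheless incomplete at exactly the point you flag: you must show that (outside the single branch $k=5$, $j=1$) no peeled length-$3$ atom lies entirely in the subgroup $H$, and you only sketch a repair. That repair is not just unfinished but genuinely delicate. Writing $K=\{0,t_1,t_2,t_3\}$ for the order-$4$ subgroup generated by an all-in-$H$ atom, the coset $e+H$ splits into four $K$-cosets of size $4$, every $t_i$-pair lies inside one of them, and two disjoint pairs inside a full $K$-coset automatically carry the same ``colour''; consequently, for the intersection pattern $(4,4,1,1)$ of $\supp(B_j)$ with the four $K$-cosets, no three pairwise disjoint pairs of three distinct colours exist. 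Such patterns are excluded only by invoking $\sigma(B_j)=0$ coset by coset, so your matching step requires a further case analysis that you have not carried out; the same applies to the upper bounds $\mathsf{D}_k(C_2^5)\le 3k+4$ for $k\in[3,5]$, which you assert follow from ``the same analysis'' without detail.

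The paper closes the critical step by a different and cleaner mechanism that you may want to adopt. Given a length-$3$ atom $A\mid B$ with $C=A^{-1}B$ of maximal length in $\mathcal{A}_{k-1}(C_2^5)$ and $\supp(C)\subset e_1+G_1$, it chooses $g\mid A$ lying in $G_1$, extracts a second length-$3$ atom $A'$ containing $g$ from $gC$ via Theorem \ref{e2g_thm_l3ex} (applicable because $\supp(gC)$ is not contained in a coset), and applies the induction hypothesis to $C'=A'^{-1}B$ as well, obtaining a second coset $e_2+G_2\supset\supp(C')$. If the two cosets were distinct, their intersection would have exactly $8$ elements, and $\gcd(C,C')$, of length at least $8$, would have support equal to that whole intersection; then $\sigma(\gcd(C,C'))=0$ and $\gcd(C,C')^{-1}C$ would be a zero-sum sequence of length $2$, contradicting squarefreeness. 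Hence the cosets coincide, which forces the two terms of $A$ other than $g$ into the coset and in particular excludes $A\subset G_1$. Without this argument (or a fully verified version of your matching argument) the ``only if'' direction is not proved.
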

An alternate, and perhaps more natural, formulation for the last condition for $k=5$ is that $\supp(B)$ contains the non-zero coset of a subgroup of index $2$;
yet, to highlight the analogy to the preceding statements we chose the other formulation. Moreover, we point out that for $k=4$ the elements $g_1,g_2$ actually can be contained in the coset as well; for $k=3$ and $k=5$ this is clearly impossible.
Moreover, note that for $k=2$ the condition $0\nmid B$ is redundant, since $\supp(B)$ is contained in a non-zero coset.

\begin{proof}
We consider each   $k \in [2,5]$ separately; yet, the results for larger $k$ build on those for smaller ones.

\noindent
\textbf{Case $k=2$:}
First, suppose that $|B|=10$, $B$ is squarefree, and $\supp(B)$ is contained in the non-zero coset of a subgroup of index $2$. Then, $B$ has no non-empty zero-sum subsequence of length less than $4$. Thus $\max \mathsf{L}(B)\le 2$ and it follows that $B\in \mathcal{A}_2(C_2^5)$.
Now, suppose $B\in \mathcal{A}_2(C_2^5)$ with $|B|= \mathsf{D}_2(C_2^5)$.
We observe that, since by Lemma \ref{e2g_lem_D2m} $\mathsf{s}_{\le 4}(C_2^5)\le 9$ and applying Proposition \ref{ub_prop_bas}.3 $\mathsf{D}_2(C_2^5)\le 10$, and by the above example equality holds.
Again, by Proposition \ref{ub_prop_bas}.3, $B$ has no non-empty zero-sum subsequence of length less than $4$. Thus, by Lemma \ref{e2g_lem_l3}, $B$ is squarefree and $\supp(B)$ sum-free.
More precisely, by Theorem \ref{e2g_thm_l3ex} it follows that $\supp(B)$ is contained in the non-zero coset of a subgroup of index $2$; note that $B$ cannot  have the other form mention in that result, since  then it would not be a zero-sum sequences.

\noindent
\textbf{Case $k=3$:} First, suppose $|B|=13$, $B$ is squarefree, $0\nmid B$ and there exists some $g\mid B$ such that $\supp(g^{-1}B)$ is contained in the non-zero coset of a subgroup of index $2$. Let $B=A_1\dots A_{\ell}$ with   $A_i\in \mathcal{A}(C_2^5)$; assume that $g\mid A_{\ell}$. Since $B$ is squarefree and $0 \nmid B$, we have $|A_i| \ge 3$ for each $i \in [1, \ell]$. Moreover, we get that $|A_i|$ is even for $i \in [1, \ell -1]$. Thus, $\ell \le 3$ and in fact $\ell = 3$ by the result for $k=2$.

Now, suppose $B\in \mathcal{A}_3(C_2^5)$ with $|B|= \mathsf{D}_3(C_2^5)$. By the argument above we get that $|B|=\mathsf{D}_3(C_2^5)\ge 13$.
First, suppose $|B|> 13$. By Proposition \ref{ub_prop_bas}.3 and $\mathsf{D}_2(C_2^5)=10$, we get that  $B$ has no non-empty zero-sum subsequence of length less than $4$, and thus as above $B$ is squarefree and by Theorem \ref{e2g_thm_l3ex} $\supp(B)$ is contained in the non-zero coset of a subgroup of index $2$.
Since over $C_2^4$ there exists no squarefree zero-sum sequence of length $14$ (cf.~Proof of Theorem \ref{e2g_thm_4}), we get that $|B| \neq 14$. Yet, $|B| = 16$ cannot hold either, since $B$ would have a zero-sum subsequence of length $4$, contradicting $\mathsf{D}_2(C_2^5)=10$.
Thus, we have $|B|=13$. As above we get that $0 \nmid B$ and that $B$ is squarefree. Since $\supp(B)$ is not contained in the non-zero coset of a subgroup of index $2$, it follows that there exists some $A\in \mathcal{A}(C_2^5)$ with $|A|=3$ and $A\mid B$. Let $C=A^{-1}B$; we know that $C \in \mathcal{A}_2(C_2^5)$.
By the result for $k=2$ we get that  $\supp(C)$ is  contained in the non-zero coset of a subgroup of index $2$, say $e_1 + G_1$.  Let $g \mid A$ such that $g\notin G_1$; clearly such an element exists.
By Theorem \ref{e2g_thm_l3ex} there exists some $A'\in \mathcal{A}(C_2^5)$ with $|A'|=3$ and   $A'\mid gC$.  Let $C'=A'^{-1}B$. As above we get that  $\supp(C')$ is  contained in the non-zero coset of a subgroup of index $2$, say, $e_2 + G_2$. If $e_1+G_1=e_2+G_2$, then this coset contains all elements of $\supp(g^{-1}B)$ and we are done. Thus, suppose $e_1+G_1\neq e_2+G_2$. Then, $(e_1 + G_1) \cap (e_2+G_2)=e_0+G_0$ where $G_0$ is a group of rank $3$.
Clearly $\supp(\gcd(C,C'))\subset e_0+G_0$.  Since $|\gcd(C,C')|\ge 8$, we get that indeed $\supp(\gcd(C,C'))= e_0+G_0$.
This implies that $\sigma(\gcd(C,C'))=0$. However, this is impossible, since if this were the case we would get that $\gcd(C,C')^{-1}C$ is a zero-sum subsequence of length $2$ of $B$, contradicting the fact that $B$ is squarefree. So, $e_1+G_1=e_2+G_2$ and the claim is established.

\noindent
\textbf{Case $k=4$:} First, suppose $|B|=16$, $B$ is squarefree, $0\nmid B$, and there exist $gh\mid B$ such that $\supp((gh)^{-1}B)$ is contained in the non-zero coset of a subgroup of index $2$. Similarly as in the argument for $k=3$, let $B=A_1\dots A_{\ell}$  and assume that $gh\mid A_{\ell-1}A_{\ell}$. We have $|A_i| \ge 3$ for each $i \in [1, \ell]$ and $|A_i|$ is even for $i \in [1, \ell -2]$. Thus, $\ell \le 4$ and in fact $\ell = 4$.

Now, suppose $B\in \mathcal{A}_4(C_2^5)$ with $|B|= \mathsf{D}_4(C_2^5)$. By the argument above we get that $|B|=\mathsf{D}_4(C_2^5)\ge 16$.
By Theorem \ref{e2g_thm_l3ex} we have $\mathsf{s}_{\le 3}(C_2^5)=17$ and by Proposition \ref{ub_prop_bas}.3 it thus follows that indeed $\mathsf{D}_4(C_2^5)= 16$.
If $\supp(B)$ is contained in the non-zero coset of a subgroup of index $2$, we are done. If this is not the case, we get by Theorem \ref{e2g_thm_l3ex} that there exists some $A\in \mathcal{A}(C_2^5)$ with $|A|=3$ and $A\mid B$; let $C=A^{-1}B$. We get that $C \in \mathcal{A}_3(C_2^5)$ and $|C|=13$. By the result for $k=3$ we know the structure of $C$; let $g\mid C$ such that $\supp(g^{-1}C)$ is contained in $e+G'$ the non-zero coset of $G'$, a subgroup of index $2$.
If $\supp(A)\cap (e+G')\neq \emptyset$, then $|\supp(A)\cap (e+G')|= 2$ and we are done.
Thus, we may assume that $\supp(A)\cap (e+G')= \emptyset$.

By the argument for $k=3$, we know that there exist some  $A'\in \mathcal{A}(C_2^5)$ with $|A'|=3$ and $A'\mid C$, and for $C'=A'^{-1}C$ we have that $\supp(C')$ is contained in the non-zero coset of a subgroup of index $2$. More precisely, we know that $g \mid A'$ and that $\supp(C')\subset e+ G'$; note that $|\supp(C')|=10$ and thus the non-zero coset of a subgroup of index $2$ that contains $\supp(C')$ is uniquely determined.

We consider $AC'$. We assert that $\max \mathsf{L}(AC')\ge 4$, which implies $\max\mathsf{L}(B)\ge 5$, a contradiction.
We have $|AC'|=13$. Since $\supp(A)\cap (e+G')=\emptyset$, it follows that no non-zero coset of a subgroup of index $2$ contains $12$ elements of $\supp(AC')$; for $e+G'$ this is clear, and any other non-zero coset of a subgroup of index $2$ can contain at most $8$ elements of $\supp(C')$.
Thus, by the result for $k=3$ we get that $AC'\notin \mathcal{A}_3(C_2^5)$ and the claim follows.

\noindent
\textbf{Case $k=5$:}  The first part of the argument is analogous to the one for $k=3$ and $k=4$, in this case we have at most $3$ zero-sum sequences of length $3$ and all other sequences have length at least $4$.
Moreover, since $\mathsf{s}_{\le 3}(C_2^5)=17$ it follows by Proposition \ref{ub_prop_bas}.3, and the example that $\mathsf{D}_5(C_2^5)=19$.

Suppose $B\in \mathcal{A}_5(C_2^5)$ with $|B|= \mathsf{D}_5(C_2^5)$. We get that $B=AC$ with $A$ a zero-sum sequence of length $3$ and $C \in \mathcal{A}_4(C_2^5)$.
By the result for $k=4$ we know that all except at most $2$ elements of $\supp(C)$ are contained in the non-zero coset $e+G'$ of $G'$, a subgroup of index $2$.
If $\supp(A)\cap (e+G')\neq \emptyset$, then this intersection contains $2$ elements, and we are done.
Also, if  $\supp(C)= e + G'$, we are done.
Thus, suppose neither is the case.

We get, by the argument for $k=4$, that $C=A_1A_2C'$ with zero-sum sequence $A_i$ of length $3$ and
$C'\in \mathcal{A}_2(C_2^5)$ with $\supp(C')\subset e+G'$.
We consider $AC'$. Since by assumption $\supp(A)\cap e+ G'=\emptyset$, this sequence is not of the form given by the result for $k=4$.
Thus, $\max \mathsf{L}(AC')\ge 4$, and $\max \mathsf{L}(B)\ge 2+\max \mathsf{L}(AC')\ge 6$, a contradiction.
\end{proof}

\begin{lemma}
\label{e2g_lem_67}
$\mathsf{D}_6(C_2^5)=21$ and $\mathsf{D}_7(C_2^5)=23$.
\end{lemma}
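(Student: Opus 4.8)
The plan is to prove the two equalities in parallel, the lower bounds being immediate and all the content lying in the upper bounds. For the lower bounds, Lemma \ref{rlb_lem_lb2} together with $\mathsf{D}_5(C_2^5)=19$ (Proposition \ref{e2g_prop_small}) gives $\mathsf{D}_6(C_2^5)\ge \mathsf{D}_5(C_2^5)+2=21$ and $\mathsf{D}_7(C_2^5)\ge \mathsf{D}_5(C_2^5)+4=23$. So it remains to show $\mathsf{D}_6(C_2^5)\le 21$ and $\mathsf{D}_7(C_2^5)\le 23$, i.e.\ that there is no zero-sum $B$ with $\max\mathsf{L}(B)\le 6$ and $|B|=22$, and none with $\max\mathsf{L}(B)\le 7$ and $|B|=24$. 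In both cases I would first reduce to squarefree sequences: writing $B=B'T^2$ with $B'$ squarefree, Lemma \ref{rlb_lem_lb2} gives $\max\mathsf{L}(B')=\max\mathsf{L}(B)-|T|$ and $|B'|=|B|-2|T|$; since the known values $\mathsf{D}_{6-t}(C_2^5)$ and $\mathsf{D}_{7-t}(C_2^5)$ are strictly below $22-2t$ and $24-2t$ for every $t\ge 1$ (e.g.\ $\mathsf{D}_5=19<20$, $\mathsf{D}_6\le 21<22$), we must have $|T|=0$, and $0\nmid B$ follows from the final assertion of Lemma \ref{rlb_lem_lb2}.

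For $\mathsf{D}_6(C_2^5)\le 21$, assume $B$ is squarefree, $0\nmid B$, $|B|=22$, $\max\mathsf{L}(B)=6$ (the value $6$ is forced since $22>\mathsf{D}_5$). As $22>\mathsf{s}_{\le 3}(C_2^5)=17$ (Theorem \ref{e2g_thm_l3ex}) there is a length-$3$ atom $A_1=a_1a_2a_3\mid B$, and by Proposition \ref{ub_prop_bas}.1 the sequence $B_1=A_1^{-1}B$ has $\max\mathsf{L}(B_1)\le 5$ and $|B_1|=19=\mathsf{D}_5(C_2^5)$, hence is a length-maximal element of $\mathcal{A}_5(C_2^5)$. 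Proposition \ref{e2g_prop_small} then gives $B_1=g_1g_2g_3\,D$, where $\supp(D)$ is the full non-zero coset $e+G'$ of an index-$2$ subgroup $G'$ and $g_1,g_2,g_3\in G'\setminus\{0\}$; disjointness of $A_1$ from the coset forces $a_1,a_2,a_3\in G'\setminus\{0\}$, and from $\sigma(B)=\sigma(D)=0$ one gets $a_1+a_2+a_3=0$ and $g_1+g_2+g_3=0$. Thus $\supp(B)$ consists of the full coset $e+G'$ together with six distinct non-zero elements of $G'$ spanning the Klein four-subgroups $H_a=\{0,a_1,a_2,a_3\}$ and $H_g=\{0,g_1,g_2,g_3\}$; distinctness gives $H_a\cap H_g=\{0\}$, so $G'=H_a\oplus H_g$. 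I would then build a factorization of $B$ into $7$ atoms, contradicting $\max\mathsf{L}(B)=6$: for each $x\in\{a_1,a_2,a_3,g_1,g_2,g_3\}$ and any coset element $e+h$, the triple $x\,(e+h)\,(e+h+x)$ is a length-$3$ zero-sum atom, so it suffices to choose six disjoint such pairs of coset elements, one of each prescribed difference $x$, leaving four coset elements as a seventh factor. Identifying $e+G'$ with $G'=H_a\oplus H_g$ via $e+h\mapsto h$, this is a rainbow matching whose six prescribed differences split between the two summands $H_a$ and $H_g$, which the direct-sum structure makes routine to realize explicitly.

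For $\mathsf{D}_7(C_2^5)\le 23$ the scheme is analogous but heavier: I would show directly that every squarefree zero-sum $B$ with $0\nmid B$ and $|B|=24$ satisfies $\max\mathsf{L}(B)\ge 8$. Peel a length-$3$ atom $A_1$ to get a squarefree zero-sum $B_1=A_1^{-1}B$ of length $21$, so that $\max\mathsf{L}(B)\ge \max\mathsf{L}(B_1)+1$ and, by Lemma \ref{e2g_lem_lb}.1, $\max\mathsf{L}(B_1)\le 7$. The key structural input is that a length-maximal \emph{squarefree} element of $\mathcal{A}_6(C_2^5)$ has support containing the full non-zero coset of an index-$2$ subgroup; granting this, either $\max\mathsf{L}(B_1)=7$, whence $\max\mathsf{L}(B)\ge 8$, or $\max\mathsf{L}(B_1)\le 6$ and $\supp(B_1)$, hence $\supp(B)$, contains a coset $e+G'$. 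In the latter case $\supp(B)=(e+G')\cup S$ with $S\subseteq G'$, $|S|=8$, $\sigma(S)=0$, and I would decompose $B$ into eight length-$3$ atoms, each a pair of coset elements of prescribed difference together with an element of $S$ — that is, a \emph{perfect} rainbow matching of the sixteen coset elements with the eight prescribed differences $S$. Either way $\max\mathsf{L}(B)\ge 8$, giving $\mathsf{D}_7(C_2^5)\le 23$.

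The governing observation throughout is that any line $\{x,y,x+y\}$ meets a fixed coset $e+G'$ in an even number of points (since the three points have $G'$-residues summing to zero): this both produces the obstruction (a support containing a full coset forces $\max\mathsf{L}<|B|/3$, since covering $16$ coset points in pairs needs too many factors) and yields the matching reformulations above. I expect the required rainbow matchings to follow from a Hall-type/augmenting argument, with the complete-mapping Lemma \ref{e2g_lem_full} handling the most symmetric configurations (as in the proof of Proposition \ref{e2g_prop_maxfull}). The hardest step will be the $\mathcal{A}_6$ structure theorem needed for $\mathsf{D}_7$: unlike the cases $k\le 5$ in Proposition \ref{e2g_prop_small}, splitting off one short atom no longer lands on a length-maximal sequence of the previous level, so its proof must proceed more delicately via Theorem \ref{e2g_thm_l3ex} and the coset-parity constraint rather than by a clean induction.
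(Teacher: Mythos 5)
Your lower bounds, the reduction to squarefree $B$ with $0\nmid B$, and the argument for $\mathsf{D}_6(C_2^5)\le 21$ are sound. The $\mathsf{D}_6$ route is genuinely different from the paper's (which peels three further length-$3$ atoms $A_g,A_h,A_f$ off $C=A^{-1}B$ and invokes the $k=3$ case of Proposition \ref{e2g_prop_small} to conclude that the residual length-$13$ sequence $AD$ is not in $\mathcal{M}_3(C_2^5)$, giving $3+4=7$ factors); your direct construction of a $7$-term factorization from six disjoint prescribed-difference pairs in $G'=H_a\oplus H_g$ works, and such a matching is indeed easy to write down explicitly, with the four leftover coset elements summing to zero.

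The $\mathsf{D}_7(C_2^5)\le 23$ argument, however, has a genuine gap --- in fact two. First, it hinges on a structure theorem for length-maximal squarefree elements of $\mathcal{A}_6(C_2^5)$ (support containing a full non-zero coset of an index-$2$ subgroup) that you do not prove and that, as you yourself note, does not follow by the peeling scheme of Proposition \ref{e2g_prop_small}: removing a length-$3$ atom from a length-$21$ sequence leaves length $18<\mathsf{D}_5(C_2^5)=19$, so the $k=5$ structure result cannot be invoked on the remainder. Second, even granting that structure, your endgame requires partitioning all sixteen elements of the coset $e+G'$ into eight pairs whose differences realize a \emph{prescribed} $8$-element zero-sum subset $S\subset G'\setminus\{0\}$ --- a perfect rainbow matching with prescribed difference set. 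This is not a Hall-type statement (Hall's theorem gives no control over which differences occur), nothing in the paper supplies it, and it would itself need a careful proof. The paper sidesteps both obstacles by passing to the complement: the squarefree sequence $B'$ supported on $C_2^5\setminus(\supp(B)\cup\{0\})$ has length $7$ and factors as $A_3A_4$ with $|A_3|=3$ and $|A_4|=4$; one then upgrades the length-$10$ factorization of $BB'$ from Proposition \ref{e2g_prop_maxfull} to one divisible by $A_3\cdot A_4$ (a short case analysis on $\mathsf{r}(\langle\supp(B')\rangle)$ using the explicit bijection of Lemma \ref{e2g_lem_full}), and deleting these two atoms leaves a factorization of $B$ of length $8$. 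You would either need to prove your two missing lemmas or switch to an argument of this complementary type.
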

\begin{proof}
First, we consider $\mathsf{D}_6(C_2^5)$.
By Proposition \ref{ub_prop_bas}, Theorem \ref{e2g_thm_l3ex} and Proposition \ref{e2g_prop_small} we know that $\mathsf{D}_6(C_2^5)\le \mathsf{D}_5(C_2^5)+3=22$, and by Lemma \ref{rlb_lem_lb2} we get that
$\mathsf{D}_6(C_2^5)\ge \mathsf{D}_5(C_2^5)+2=21$.
Thus, suppose $B \in \mathcal{B}(C_2^5)$ with $|B|=22$. We have to show that $B\notin \mathcal{M}_6(C_2^5)$, i.e., $\max \mathsf{L}(B)>6$.
By the results mentioned above, it is clear that $B$ is squarefree and $0 \nmid B$.
By Theorem \ref{e2g_thm_l3ex} we know that there exists some $A \in \mathcal{A}(C_2^5)$ with $|A|=3$ such that $A\mid B$.
Let $B=AC$. If $C \notin \mathcal{M}_5(C_2^5)$, we are done.
Thus assume that $C \in \mathcal{M}_5(C_2^5)$. By Proposition \ref{e2g_prop_small} we get that there exists some non-zero coset of a subgroup of index $2$, say $e+G'$, and $ghf\mid C$ such that $\supp((ghf)^{-1}C)= e+G'$.
We note that $\supp(A) \subset G'$.
Moreover, we note that, by repeated application of Theorem \ref{e2g_thm_l3ex}, there exist $A_g,A_h, A_f \in \mathcal{A}(C_2^5)$ each of length $3$, where $g,h,f$ is contained in the respective minimal zero-sum sequence,  such that $A_gA_hA_f\mid C$.
Let $C=A_gA_hA_fD$. We note that $\supp(D) \subset e+G'$ and we consider $AD$. By Proposition \ref{e2g_prop_small} we get that $AD \notin \mathcal{M}_3(C_2^5)$.
Yet, this implies that $B=A_gA_hA_fAD\notin \mathcal{M}_6(C_2^5)$.

Now, we consider $\mathsf{D}_7(C_2^5)$. Analogously as above, we see that it suffices to consider a sequence $B \in \mathcal{B}(C_2^5)$ with $|B|=24$ and to show that
$B \notin \mathcal{M}_7(C_2^5)$. Again, we may assume that $B$ is squarefree and $0 \nmid B$.
Let $B'$ denote the squarefree sequence with support $C_2^5\setminus (\supp(B)\cup \{0\})$.
Since $B'$ is a zero-sum sequences of length $7$ it follows that $B'=A_3A_4$ with $A_i \in \mathcal{A}(C_2^5)$ of length $3$ and $4$, resp.
We recall that by Proposition \ref{e2g_prop_maxfull} there exists a factorization $\zeta$ of $BB'$ with $|\zeta|=10$.
Yet, we need the stronger assertion that there exists a factorization $\zeta^{\ast}$ of $BB'$ with $|\zeta^{\ast}|=10$ and $A_3A_4 \mid \zeta^{\ast}$.

Let $s$ denote the rank of $\langle \supp(B')\rangle$; clearly $ 3\le s \le 5$. Moreover, let $G' = \langle \supp(A_4) \rangle$.
For $s=3$ and $s=5$ a factorization that is generated by the proof of Proposition \ref{e2g_prop_maxfull} essentially has the required property.
For clarity, we make this more explicit.
If $s=3$, then $\supp(B')= G' \setminus \{0\}$ and $\supp(B)= C_2^5 \setminus G'$.
Thus, it follows (cf.~the proof of Proposition \ref{e2g_prop_maxfull}) that $\max L(B)= |G'|=8$ and the claim is established.
If $s=5$, let $A_3= e_1e_2(e_1+e_2)$ and we note that $C_2^5= G' \oplus \langle e_1,e_2 \rangle$.
The proof of Proposition \ref{e2g_prop_small}, with respect to this decomposition of the groups and using the bijection $\varphi$ given in the proof of Lemma \ref{e2g_lem_full} for ``$r=3$'', yields a factorization $\zeta=\zeta'\zeta''$ where $\supp(\pi(z'))=G' \setminus \{0\}$ and $\supp(\pi(\zeta''))=C_2^{5}\setminus G'$. And, we have  $A_3 \mid \zeta''$ and we may assume that $A_4 \mid \zeta'$.

Now, suppose $s=4$. Let $A_4=f_1f_2f_3(f_1+f_2+f_3)$ with independent elements $f_i$.
We observe that $|\supp(A_3)\cap G'|=1$, and since this element is non-zero and not contained in $\supp(A_4)$, we may assume that it is equal to $f_1+f_2$. Let $e_1\in \supp(A_3)\setminus \{f_1+f_2\}$. Then $A_3=e_1(f_1+f_2)(e_1+f_1+f_2)$.
Let $e_2 \in C_2^5$ such that $C_2^5 = G' \oplus \langle e_1, e_2 \rangle$. As above, let $\zeta=\zeta'\zeta''$ denote the factorization of $BB'$ of length $10$ that is given by the proof Proposition \ref{e2g_prop_maxfull}, and as noted above we may assume that $A_4 \mid \zeta'$. Note that this implies that $A_4^{-1}\zeta'= (f_1+f_2)(f_1+f_3)(f_2+f_3)$.
Moreover, we have $e_1e_2(e_1+e_2)\mid \zeta''$ and $(e_1 + f_1 +f_2) (e_2 + f_1 +f_3)(e_1+e_2 + f_1 +f_2)\mid \zeta''$; again, we use the bijection $\varphi$ as defined in the proof of Lemma \ref{e2g_lem_full}.

Now, we construct a new factorization $\zeta^{\ast}$ of $B'B$. Let
$\xi_1=(e_1e_2(e_1+e_2)) \cdot ((e_1 + f_1 +f_2) (e_2 + f_1 +f_3)(e_1+e_2 + f_1 +f_2))\cdot ((f_1+f_2)(f_1+f_3)(f_2+f_3))$
and let $\xi_2 = A_3 \cdot(e_2(e_2+f_1+f_3)(f_1+f_3))\cdot (e_3(e_3+f_2+f_3)(f_2+f_3))$, then set
\[\zeta^{\ast}=   \xi_1^{-1}  \zeta \xi_2 .\] This is indeed a factorization of $B'B$ of length $10$ and is divisible by $A_3$ and $A_4$.
\end{proof}

Having the preparatory result at hand, we complete the proof of Theorem \ref{e2g_thm_5}.

\begin{proof}[Proof  of Theorem \ref{e2g_thm_5}]
For $k \le 7$, the result were established in Proposition \ref{e2g_prop_small} and Lemma \ref{e2g_lem_67}.
By Proposition \ref{ub_prop_bas}.3 and Theorem \ref{e2g_thm_l3ex}, we know that $\mathsf{D}_8(C_2^5)\le \mathsf{D}_7(C_2^5) + 3 = 26$.
We observe that there exists a squarefree $B \in \mathcal{B}(C_2^5)$ with $0 \nmid B$ and $|B| = 26$. By Lemma \ref{e2g_lem_lb}, we have $\max\mathsf{L}(B)\le 26/3$, and thus $B \in \mathcal{M}_8(C_2^5)$. Thus, $\mathsf{D}_8(C_2^5)= 26$.
As above, it follows that $\mathsf{D}_9(C_2^5)\le \mathsf{D}_8(C_2^5) + 3 = 29$.
Suppose there exists some $B \in \mathcal{M}_9(C_2^5)$ with $|B|=29$. We observe that $0\mid B$ or $B$ is not squarefree; in any case $B$ has a non-empty zero-sum subsequence of length at most $2$. Yet, by Proposition \ref{ub_prop_bas}.3 this contradicts $\mathsf{D}_8(C_2^5) = 26$.
Thus, we get $\mathsf{D}_9(C_2^5)\le 28$ and by Lemma \ref{rlb_lem_lb2} equality holds.
As above, we get $\mathsf{D}_{10}(C_2^5)\le  31$ and the existence of a squarefree $B \in \mathcal{B}(C_2^5)$ with $0\nmid B$ and $|B|=31$ shows that equality holds.
Finally, since $\mathsf{s}_{\le 2}(C_2^5)=\eta(C_2^5)=32$, we get by Proposition \ref{ub_prop_bas}.3 that $\mathsf{D}_{10+\ell}(C_2^5)\le 31 +2\ell$ for each $\ell \in \mathbb{N}$, and conversely by Lemma \ref{rlb_lem_lb2} that  $\mathsf{D}_{10+\ell}(C_2^5)\ge 31 +2\ell$, completing the argument.
\end{proof}

In the following remark, we sketch a different argument to show $\mathsf{D}_0(C_2^5)=11$, which does not require to determine all constants $\mathsf{D}_k(C_2^5)$.

\begin{remark}
\label{e2g_rem_thm5}
By Proposition \ref{e2g_prop_tech} we know that $k_{\mathsf{D}}(C_2^5)\le 10$.
Thus, to determine $\mathsf{D}_0(C_2^5)$ it suffices to determine $\mathsf{D}_{10}(C_2^5)$.
By Lemma \ref{e2g_lem_lb} (or Proposition \ref{e2g_prop_maxfull}) we get that  $\mathsf{D}_{10}(C_2^5)\ge 31$.
Suppose that there exists some $B \in \mathcal{M}_{10}(C_2^5)$ such that $|B| \ge 32$; by Lemma \ref{rlb_lem_lb2} we may assume that $0 \nmid B$.
Let $B= B'T^2$ with $B' \in \mathcal{B}(C_2^5)$ squarefree and $T \in \mathcal{F}(C_2^5)$.
We have $10 \ge \max \mathsf{L}(B)\ge \mathsf{L}(B')+|T|$ and $|B'|\ge 32 -2 |T|$.

Now, suppose the following holds; we describe below how these claims can be proved.
\begin{enumerate}
  \item If $|B'|= 28$, then $\max \mathsf{L}(B')\ge 9$.
  \item $\mathsf{D}_k(C_2^5)\le 11 + 2k$ for $k \in [1,7]$.
\end{enumerate}
It is easy to see that $|T|\in [1,9]$. If $|T| \in [3,9]$, we get $|B'| \ge 32 -2|T| > 11 + 2 (10 -|T|) \ge \mathsf{D}_{10-|T|}(C_2^5)$, and thus $\max\mathsf{L}(B')> 10-|T|$, a contradiction.
For $|T|=1$, we note that $|B'|\neq 30$ and if $|B'|=31$, then by Proposition \ref{e2g_prop_maxfull} $\max \mathsf{L}(B')=10$, a contradiction.
And, for $|T|=2$, we note that $|B'|\notin \{29,30\}$ and for $|B'|\in \{28,31\}$, we have by 1. and see above $\max\mathsf{L}(B')\ge 9$, a contradiction.

We explain how to show 1. and 2. For 1.\ the argument is similar to the one used to determine $\mathsf{D}_7(C_2^5)$, though much simpler.
For 2., we first determine $\mathsf{D}_k(C_2^5)$ for $k \in [1,4]$, e.g.,  as in the first parts of Proposition \ref{e2g_prop_small} (the detailed investigation of the structure could be omitted). Then, we use Proposition \ref{ub_prop_bas}.3 and $\mathsf{s}_{\le 3}(C_2^5)=17$, to get $\mathsf{D}_{4 + \ell}(C_2^5)\le \mathsf{D}_4(C_2^5)+3\ell =16+3\ell$.
\end{remark}

We derive bounds for $\mathsf{D}_0 (C_2^r)$ that are asymptotically exact. We exclude $r=1$, since this case is well-known (see Remark \ref{as_rem}) and would have to be considered separately.

\begin{theorem}
Let $r \in \mathbb{N}\setminus \{1\}$. Then
\[\left \lceil \frac{2^r - 1}{3} \right \rceil \le \mathsf{D}_0(C_2^r)\le \left \lceil \frac{2^r - 1}{3}\right \rceil +  2^{r/2}.\]
\end{theorem}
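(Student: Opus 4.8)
The plan is to collapse both inequalities to a single extremal quantity. By Lemma~\ref{rlb_lem_lb2} the sequence $\mathsf{D}_k(C_2^r)-2k$ is non-decreasing in $k$, and by definition of $\mathsf{D}_0$ it is eventually equal to $\mathsf{D}_0(C_2^r)$; hence
\[
\mathsf{D}_0(C_2^r)=\max\bigl\{\,|B|-2\max\mathsf{L}(B)\colon B\in\mathcal{B}(C_2^r)\,\bigr\}.
\]
For the lower bound I would test this against the squarefree zero-sum sequence $F$ with $\supp(F)=C_2^r\setminus\{0\}$: Proposition~\ref{e2g_prop_maxfull} gives $\max\mathsf{L}(F)=\lfloor(2^r-1)/3\rfloor$, and since $2^r\equiv1\pmod3$ for $r$ even and $2^r\equiv2\pmod3$ for $r$ odd, one checks in both cases that $(2^r-1)-2\lfloor(2^r-1)/3\rfloor=\lceil(2^r-1)/3\rceil$, which is the asserted lower bound.

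For the upper bound I would first reduce to squarefree $B$ with $0\nmid B$. Deleting a term $0$ increases $|B|-2\max\mathsf{L}(B)$ by one (using $\max\mathsf{L}(B0)=\max\mathsf{L}(B)+1$), while deleting a repeated pair $gg$ leaves it unchanged (Lemma~\ref{rlb_lem_lb2}); thus the maximum is attained at some squarefree $B$ with $0\nmid B$, and I write $w=(2^r-1)-|B|$ so that $B=FW^{-1}$ for the squarefree zero-sum sequence $W$ supported on the complement. Two regimes are then easy. If $|B|\le2^{r-1}$, Remark~\ref{ub_rem}.2 with $\ell_1=4$ together with the Sidon bound $\mathsf{s}_{\le4}(C_2^r)\le1+2^{(r+1)/2}$ of Lemma~\ref{e2g_lem_D2m} (case $m=2$) yields $\max\mathsf{L}(B)\ge(|B|-\mathsf{s}_{\le4}(C_2^r)+1)/4$, whence $|B|-2\max\mathsf{L}(B)\le|B|/2+\mathsf{s}_{\le4}(C_2^r)/2\le2^{r-2}+2^{r/2}$, which is within the bound because $2^{r-2}\le\lceil(2^r-1)/3\rceil$. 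If instead $w\le2^{r/2}$, I would fix an optimal factorization of $F$ into the $\lfloor(2^r-1)/3\rfloor$ minimal zero-sum sequences of length $3$ (and at most one of length $4$) produced by Proposition~\ref{e2g_prop_maxfull}; deleting the $w$ terms of $W$ destroys at most $w$ of these atoms, so the survivors give $\max\mathsf{L}(B)\ge\lfloor(2^r-1)/3\rfloor-w$ and $|B|-2\max\mathsf{L}(B)\le\lceil(2^r-1)/3\rceil+w\le\lceil(2^r-1)/3\rceil+2^{r/2}$.

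The remaining, and genuinely hard, regime is $2^{r/2}<w<2^{r-1}$, i.e.\ sequences of length exceeding $2^{r-1}$ but missing more than $2^{r/2}$ elements. Here $\supp(B)$ is not contained in the non-zero coset of a subgroup of index $2$, so (by Lemma~\ref{e2g_lem_l3} and Theorem~\ref{e2g_thm_l3ex}) length-$3$ atoms are plentiful, and I would imitate the coset-crossing construction of Proposition~\ref{e2g_prop_maxfull}: writing $C_2^r=G'\oplus\langle e_1,e_2\rangle$ with $|G'|=2^{r-2}$, I would pack as many length-$3$ atoms $(e_1+a)(e_2+b)(e_1+e_2+a+b)$ as possible, each taking one term from each of the three non-trivial cosets and matched through a near-complete mapping of $G'$ as in Lemma~\ref{e2g_lem_full}, and then recurse on the part of $B$ lying in $G'$, a squarefree zero-sum sequence over $C_2^{r-2}$, to which the induction hypothesis applies.

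The main obstacle is precisely the control of this matching when the three cosets are only partially occupied. For the full sequence $F$ one has a perfect matching (the complete mapping of Lemma~\ref{e2g_lem_full}), but in general some coset terms cannot be completed to length-$3$ atoms, and I must show that the number of such leftover terms is $O(2^{(r-2)/2})$ at each level. I expect this matching defect to be governed by the same Sidon-type mechanism as Lemma~\ref{e2g_lem_D2m} (two coincident pairwise sums produce a $4$-term zero-sum), so that summing the geometric series of per-level defects over the recursion $r\mapsto r-2$, and adding the contribution of the confined estimate at the base, keeps the total error below $2^{r/2}$. The base ranks $r\le5$ would be treated directly, in particular via Theorems~\ref{e2g_thm_4} and~\ref{e2g_thm_5}, which also guarantees that Theorem~\ref{e2g_thm_l3ex} can be used in its coset form throughout.
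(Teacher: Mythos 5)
Your reduction to $\mathsf{D}_0(C_2^r)=\max\{|B|-2\max\mathsf{L}(B)\}$, your lower bound via the full squarefree sequence, and your reduction to squarefree $B$ with $0\nmid B$ all match the paper and are correct. Your two ``easy regimes'' are also sound: the Sidon-type estimate handles $|B|\le 2^{r-1}$, and the atom-destruction count handles $w=2^r-1-|B|\le 2^{r/2}$. But these leave the entire range $2^{r-1}<|B|<2^r-1-2^{r/2}$ uncovered, and for that range you offer only a speculative sketch (a recursive coset-crossing construction whose key step, bounding the ``matching defect'' by $O(2^{(r-2)/2})$ per level, is asserted as an expectation rather than proved). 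That is a genuine gap: the middle regime is not a boundary case but the bulk of the possible lengths, and nothing in your argument establishes the bound there.

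The fix is a small but essential upgrade of your second regime, and it is exactly what the paper does: after fixing the maximal factorization $\zeta$ of the full sequence $F=BC$ (with $C$ supported on the complement) and discarding the atoms $\zeta''$ that meet $C$, do not throw away the elements of $B$ lying in those destroyed atoms. They form a zero-sum sequence $B''=\pi(\zeta')^{-1}B$ with $|B''|\le 2|C|+1$ (each destroyed atom has length at most $4$, one of which is counted only once, and contains at least one element of $C$), and $B''$ can be \emph{refactored} using $\mathsf{s}_{\le 4}(C_2^r)\le 1+2^{(r+1)/2}$ to yield $\max\mathsf{L}(B'')\ge(|B''|-2^{(r+1)/2})/4$ additional atoms. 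Adding this to the $\lfloor|B'|/3\rfloor$ surviving atoms and using $|B'|\ge 3|B|-2^{r+1}+1$ gives $\max\mathsf{L}(B)\ge |B|/2-2^{r+1}/12-O(2^{r/2})$, hence $|B|-2\max\mathsf{L}(B)\le 2^r/3+O(2^{r/2})$ uniformly in $|B|$, with small $r$ checked directly. In other words, the single decomposition $\max\mathsf{L}(B)\ge\max\mathsf{L}(B')+\max\mathsf{L}(B'')$ combines your regimes 1 and 2 into one estimate valid for all lengths; no recursion on the rank and no new matching construction is needed.
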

We note that for $r \in [2,5]$, by Remark \ref{as_rem} and Theorems \ref{e2g_thm_4} and \ref{e2g_thm_5} equality holds at the lower bound.
It is not clear to us whether this is to be expected for all $r$.
We only mention that the proof actually yields a slightly better upper bound for $\mathsf{D}_0(C_2^r)$, namely $(3+ 2^{(r+1)/2})/2$ and there seems to be room for further improvements.
Finally, we point out that direct application of Proposition \ref{ub_prop_lengthlb} with $\overline{\ell}=(2,3,4)$ would yield $(5/12) 2^r + \mathcal{O}(2^{r/2})$ as an upper bound.

\begin{proof}
Let $k_0 = \lfloor (2^r-1)/3 \rfloor$.
Let $B\in \mathcal{B}(C_2^r)$ the squarefree sequence with support $C_2^r\setminus \{0\}$.
By Lemma \ref{e2g_lem_lb} we have $\max \mathsf{L}(B)\le   |B|/3  $, and thus $B \in \mathcal{M}_{k_0}(C_2^r)$.
Thus, $\mathsf{D}_{k_0}(C_2^r)\ge 2^r - 1$ and $\mathsf{D}_0(C_2^r)\ge 2^r - 1 - 2 k_0 = \lceil (2^r-1)/3 \rceil$, establishing the lower bound.

Let $k_1= k_{\mathsf{D}}(C_2^r)$. Let $B \in \mathcal{A}_{k_1}(C_2^r)$ with $|B| = \mathsf{D}_{k_1}(C_2^r)$. By Proposition \ref{e2g_prop_tech} we know that $B$ is squarefree and that $0\nmid B$.
Since $|B|= \mathsf{D}_0(C_2^r) + 2 \max \mathsf{L}(B)$, an upper bound for $|B| - 2 \max\mathsf{L}(B)$ is an upper bound for $\mathsf{D}_0(C_2^r)$; we proceed to establish such an upper bound.

Let $C\in \mathcal{B}(C_2^r)$ be the squarefree sequence with support $C_2^r \setminus (\supp(B) \cup  \{0\})$.
By Proposition \ref{e2g_prop_maxfull} we know that $\max \mathsf{L}(BC) = \lfloor (2^r - 1)/3 \rfloor$; let $\zeta \in \mathsf{Z}(BC)$ be a factorization of maximal length. We note that the factorization $\zeta$ consist of minimal zero-sum sequences of length $3$ and possibly one minimal zero-sum sequence of length $4$.

Furthermore, let $\zeta= \zeta'\zeta''$ where $\zeta''$ is minimal with $C \mid \pi(\zeta'')$, in other words $\zeta''$ consists of those minimal zero-sum sequences containing an element of $C$.
Let $B'= \pi(\zeta')$ and $B'' = B'^{-1}B$.
We have $\max \mathsf{L}(B)\ge \max \mathsf{L}(B') + \max \mathsf{L}(B'')$.
Since $\zeta'\mid \zeta$ and the remark on the structure of $\zeta$ above, it follows that
 $\max \mathsf{L}(B')= \lfloor |B'|/3 \rfloor$.
By Remark \ref{ub_rem} and Lemma \ref{e2g_lem_D2m} we know that
\[\max \mathsf{L}(B'')\ge (|B''|- \mathsf{s}_{\le 4}(C_2^r) + 1)/4 \ge (|B''|-  2^{1/2} 2^{r/2})/4.
\]

Consequently,
\[
\max \mathsf{L}(B)\ge \lfloor |B'|/3 \rfloor + (|B''|-  2^{1/2} 2^{r/2})/4 \ge |B|/4 + |B'|/12 - (2/3 + 2^{1/2} 2^{r/2}/4).\]

Next, we establish a lower bound for $|B'|$.
By definition of $\zeta''$, we have $|\zeta''|\le |C|$. Since $\zeta''\mid \zeta$, it follows similarly as above that
$|\pi(\zeta'')| \le 3 |\zeta''| + 1 \le 3 |C| +1$. Thus,
$|B''| \le 2|C| + 1 = 2(2^r - 1 - |B|) +1$ and $|B'| = |B| - |B''|\ge |B|-  2(2^r - 1 - |B|) -1 = 3 |B|  -  2^{r+1} +1 $.
Combining these results we get
\[
\begin{split}
\max \mathsf{L}(B) & \ge   |B|/4 +  ( 3 |B|  -  2^{r+1} +1     )  /12 - (2/3 + 2^{1/2} 2^{r/2}/4)\\
& =|B|/2  -2^{r+1}/12 - (7/12 + 2^{1/2} 2^{r/2}/4).
\end{split}
\]
Therefore,
\[
\begin{split}
|B|- 2\max \mathsf{L}(B) & \le |B| -  2 (|B|/2  - 2^{r+1}/12 - (7/12 + 2^{1/2} 2^{r/2}/4))\\
&  = 2^r/3 + 7/6+ 2^{1/2} 2^{r/2}/2.
\end{split}
\]
For $r\ge 5$ this establishes the upper bound, and for $r \in [2,4]$ the precise value of $\mathsf{D}_0(C_2^r)$ is known by Remark \ref{as_rem} and Theorem \ref{e2g_thm_4}.
\end{proof}

We end our investigation by establishing a variant of Theorem \ref{ub_thm_ind} that is optimized for elementary $2$-groups.

\begin{theorem}
Let $k,r \in \mathbb{N}$ and $s \in [0, r]$. Then
\[\mathsf{D}_k(C_{2}^{r}) \le   \mathsf{D}_{\mathsf{D}_{k}(C_2^s) - s} (C_2^{r-s})   + s.\]
\end{theorem}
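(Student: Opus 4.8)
The plan is to argue through the maximal-sequence description $\mathsf{D}_k(C_2^r)=\max\{|B|\colon B\in\mathcal{A}_k(C_2^r)\}$, mirroring the philosophy of Theorem \ref{ub_thm_ind} but keeping track of one coset at a time. I fix a zero-sum sequence $B$ over $C_2^r$ with $\max\mathsf{L}(B)=k$ and $|B|=\mathsf{D}_k(C_2^r)$ and bound $|B|$. The key flexibility is that the right-hand side depends only on $r,s,k$, so I may choose the splitting $C_2^r=G'\oplus \bar H$ with $G'\cong C_2^s$ and $\bar H\cong C_2^{r-s}$ \emph{after} seeing $B$. Writing $\pi\colon C_2^r\to \bar G:=C_2^r/G'\cong C_2^{r-s}$ for the canonical projection (extended to a homomorphism of $\mathcal{F}$), the image $\bar B:=\pi(B)$ is a zero-sum sequence over $\bar G$ with $|\bar B|=|B|$, and a term of $B$ lies in $G'$ exactly when it contributes a $0$ to $\bar B$. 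The goal then reads $|\bar B|\le \mathsf{D}_{\mathsf{D}_k(C_2^s)-s}(C_2^{r-s})+s$.

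The heart of the argument is a factorization-lifting estimate asserting $\max\mathsf{L}(\bar B)\le \mathsf{D}_k(C_2^s)$. To prove it I would take a factorization $\prod_{j=1}^{t}\bar A_j$ of $\bar B$ over $\bar G$ of maximal length $t=\max\mathsf{L}(\bar B)$, lift it to a partition $B=\prod_{j=1}^{t}C_j$ with $\pi(C_j)=\bar A_j$ (multiplicities match, so this is possible), and observe that $\sigma(\bar A_j)=0$ forces $\sigma(C_j)\in G'$. Then $W:=\prod_{j=1}^{t}\sigma(C_j)$ is a zero-sum sequence over $C_2^s$ of length $t$, and any factorization of $W$ of length $\ell$ groups the indices into $\ell$ classes of vanishing $G'$-sum; replacing each $\sigma(C_j)$ by $C_j$ turns it into a factorization of $B$ into $\ell$ non-empty zero-sum sequences, whence $\max\mathsf{L}(W)\le \max\mathsf{L}(B)=k$, i.e.\ $W\in\mathcal{M}_k(C_2^s)$ and $t=|W|\le \mathsf{D}_k(C_2^s)$. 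To shave off the $s$, I use that $\bar B$ contains at least $s$ zeros: writing $\bar B=0^{s}\bar B'$ keeps $\bar B'$ a zero-sum sequence over $\bar G$, and from $\max\mathsf{L}(\bar B)\ge \max\mathsf{L}(0^{s})+\max\mathsf{L}(\bar B')=s+\max\mathsf{L}(\bar B')$ I get $\max\mathsf{L}(\bar B')\le \mathsf{D}_k(C_2^s)-s$. Thus $\bar B'\in\mathcal{M}_{\mathsf{D}_k(C_2^s)-s}(C_2^{r-s})$, giving $|\bar B|=|\bar B'|+s\le \mathsf{D}_{\mathsf{D}_k(C_2^s)-s}(C_2^{r-s})+s$ as desired. (The index $\mathsf{D}_k(C_2^s)-s$ is positive because $\mathsf{D}_k(C_2^s)\ge \mathsf{D}(C_2^s)=s+1$.)

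The step I expect to be the crux is manufacturing, for an arbitrary such $B$, a rank-$s$ direct summand $G'$ that contains at least $s$ terms of $B$ — equivalently, at least $s$ zeros in $\bar B$; projecting carelessly gives only the weaker bound $\mathsf{D}_{\mathsf{D}_k(C_2^s)}(C_2^{r-s})$ of the standard inductive method. This is exactly where the $\mathbb{F}_2$-vector-space structure enters. Setting $V=\langle\supp(B)\rangle$, if $\mathsf{r}(V)\ge s$ I would select $s$ elements of $\supp(B)$ lying in a basis of $V$ and let $G'$ be their span; an independent set spans a direct summand of $C_2^r$ with complement $\cong C_2^{r-s}$, and those $s$ terms then lie in $G'$. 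If $\mathsf{r}(V)<s$ I would instead enlarge $V$ to a rank-$s$ summand $G'$, so that \emph{all} of $B$ lies in $G'$ and, since $|B|=\mathsf{D}_k(C_2^r)\ge r+1>s$, at least $s$ terms do. The only remaining points are routine: the endpoints $s=0$ and $s=r$, where the inequality degenerates to an identity (using $\mathsf{D}_k(C_2^{0})=k$), and the verifications flagged above that $\bar B'$ is genuinely a zero-sum sequence and that the relevant indices are positive.
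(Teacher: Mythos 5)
Your proof is correct and takes essentially the same route as the paper's: both choose $G'=\langle e_1,\dots,e_s\rangle$ for $s$ independent elements of $\supp(B)$, project onto $C_2^{r-s}$, and transfer factorizations between $B$ and its image via the sequence of sums $\sigma(C_j)\in G'$, with the $s$ zeros in the projection accounting for the ``$-s$'' and ``$+s$''. The only difference is presentational — the paper argues by contradiction from $|B|$ exceeding the bound, whereas you bound $\max\mathsf{L}(\pi(B))$ directly and then peel off $0^s$.
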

\begin{proof}
Let $\ell = \mathsf{D}_k(C_2^s) - s$.
Let $B \in \mathcal{A}_k(G)$ with $|B|= \mathsf{D}_k(C_2^r)$ and suppose $|B|$ exceeds the claimed upper bound.
By \eqref{elb_eq_1} we know that $\langle \supp(B)\rangle = C_2^r$; let $\{e_1, \dots, e_r \} \subset \supp(B)$ be a basis of $C_2^r$.
Let $G' = \langle e_1, \dots, e_s \rangle$ and let $\varphi: C_2^r \to C_2^r / G'$ denote the canonical map; we have $C_2^r/ G' \cong C_2^{r-s}$.
We note that $\varphi(B)=0^sT$ with $T = \varphi((\prod_{i=1}^s e_i)^{-1}B)$.
Since $T \in \mathcal{B}(C_2^{r-s})$ and $|T| > \mathsf{D}_{\ell} (C_2^{r-s})$, we get that $T= \prod_{i=1}^{\ell}T_i$ with non-empty zero-sum sequences $T_i$ over $C_2^r / G'$. Let $(\prod_{i=1}^s e_i)^{-1}B = \prod_{i=1}^{\ell}S_i$ such that $\varphi(S_i)=T_i$ for each $i \in [1,\ell]$.

We consider the sequence $B'=( \prod_{i=1}^{s}e_i )(\prod_{j=1}^{\ell} \sigma(S_j))$.
We have $B'\in \mathcal{B}(G')$ and $|B'| > \mathsf{D}_{k}(G')$. Thus $\max \mathsf{L}(B') > k$. However, this contradicts
$B \in \mathcal{A}_k(G)$, since any factorization of $B'$ yields, by replacing $\sigma(S_i)$ by $S_i$, a factorization of $B$ whose length is not smaller.
\end{proof}
We note that one could combine this result, e.g., with the results on $\mathsf{D}_k(C_2^r)$ for $r\le 5$, to establish further explicit upper bounds for $\mathsf{D}_k(C_2^r)$.

\section*{Acknowledgment}
The authors would like to thank 
the referees for corrections and suggestions, D.~Grynkiewicz 
for discussions related to Snevily's conjecture,
and A.~Plagne for information on Sidon sets and for bringing 
the paper of G.~Cohen and G.~Z{\'e}mor to our attention.


\end{document}